\begin{document}
 \title{Connectivity of orientations of 3-edge-connected graphs}
\author{Florian H\"orsch}
 \ead{Florian.Hoersch@grenoble-inp.fr}
\author{Zolt\'an Szigeti}
\ead{ Zoltan.Szigeti@grenoble-inp.fr}
\address{Univ.~Grenoble~Alpes, Grenoble INP, CNRS, G-SCOP, 46 Avenue F\'elix Viallet, Grenoble, France, 38000.  }
\date{\today}

\newtheorem{Statement}{Statement}
\newtheorem{rem}{Remark}
\newtheorem{prop}{Proposition}
\newtheorem{coro}{Corollary}
\newtheorem{theo}{Theorem}
\newtheorem{conj}{Conjecture}
\newtheorem{claim}{Claim}
\newtheorem{lemma}{Lemma}
\newtheorem{prob}{Problem}
\newtheorem{opprob}{Open Problem}

\newcommand{\cev}[1]{\reflectbox{\ensuremath{\vec{\reflectbox{\ensuremath{#1}}}}}}

\newenvironment{proof}{\noindent \textbf{Proof}}{\rule{2mm}{2mm}}

 \begin{abstract}
We attempt to generalize a theorem of Nash-Williams stating that a graph has a $k$-arc-connected orientation if and only if it is $2k$-edge-connected. In a strongly connected digraph we call an arc {\it deletable} if its deletion leaves a strongly connected digraph. Given a $3$-edge-connected graph $G$, we define its Frank number $f(G)$ to be the minimum number $k$ such that there exist $k$ orientations of $G$ with the property that every edge becomes a deletable arc in at least one of these orientations. We are interested in finding a good upper bound for the Frank number. We prove that $f(G)\leq 7$ for every $3$-edge-connected graph. On the other hand, we show that a Frank number of $3$ is attained by the Petersen graph. Further, we prove better upper bounds for more restricted classes of graphs and establish a connection to the Berge-Fulkerson conjecture. We also show that deciding whether all edges of a given subset can become deletable in one orientation is NP-complete.
\end{abstract}
\maketitle

\section{Introduction}

This paper deals with ways of orienting undirected graphs so that the obtained directed graph has certain connectivity properties. Our goal is to generalize classical results of Robbins and Nash-Williams.

Let $G=(V,E)$ be an undirected graph.  For some $F\subseteq E$, {\boldmath $G(F)$} $=(V,F)$ denotes the subgraph induced by $F$. For  a set $X\subseteq V$, the subgraph induced by $X$ is denoted by {\boldmath $G[X]$}. We use {\boldmath $\delta_G(X)$} to denote the set of edges between $X$ and $V- X$ and {\boldmath $d_G(X)$} for $|\delta_G(X)|$.
For some vertex $v \in V$, we call $d(\{v\})$ the {\it degree} of $v$. The graph $G$ is called {\it cubic} if  $d_G(\{v\})=3$ for all $v\in V.$  We say that $G$ is {\it $k$-edge-connected} if $d_G(X)\geq k$ for all nonempty, proper subset $X$ of $V.$ We call $G$ {\it Eulerian} if every vertex of $G$ is of even degree.  For some $e \in E$, we denote by {\boldmath $G/e$} the graph obtained from $G$ by contracting $e$, that is deleting $e$ and identifying its two endvertices. For some $F=\{e_1,\ldots,e_t\}\subseteq E$, we denote $G/e_1/\ldots/e_t$ by {\boldmath $G/F$}. For some subgraph $H$ of $G$, we abbreviate  $G/E(H)$ to {\boldmath $G/H$}. An \textit{orientation} of $G$ is a directed graph $D=(V,A)$ such that each edge $uv\in E$ is replaced by exactly one of the arcs $uv$ or $vu$. Given some $e \in E(G)$, we use $\vec{e}$ to denote the associated arc in the orientation. We say that $G$ or a subset of $V$ is {\it trivial} if it contains only one vertex. We call $G$ {\it essentially (k+1)-edge-connected} if $G$ is $k$-edge-connected and for all edge-cuts of size $k$ one side is trivial. A {\it  cycle} is a connected graph each vertex of which is of degree $2$. A {\it path} is a connected graph in which two vertices are of degree $1$ and all other vertices are of degree $2$. A {\it  cycle packing} is a collection of vertex-disjoint cycles of $G.$ We say that a vertex or an edge is in the cycle packing if it is contained in one of the cycles of the packing.

An edge set $M$ of $G$ is called a {\it matching} if each vertex of $G$ is incident to at most one edge of $M.$ A matching $M$ is {\it perfect} if each vertex of $G$ is incident to exactly one edge of $M.$ We say that $G$ is {\it $k$-edge-colorable} if the edge set of $G$ can be partitioned into $k$ matchings.
\medskip

Let $D=(V,A)$ be a directed graph.  For some $F\subseteq A$, let {\boldmath $D(F)$} $=(V,F)$  denote the subgraph induced on $F$. The subgraph induced by some $X \subseteq V$ is denoted by {\boldmath $D[X]$}.  We use {\boldmath $\delta_D^+(X)$} to denote the set of  arcs from $X$ to  $V-X$ and {\boldmath $\delta_D^-(X)$} for $\delta_D^+(V-X)$.
For a vertex $v \in V$, we call $|\delta_D^+(\{v\})|$ the {\it out-degree} and $|\delta_D^-(\{v\})|$ the {\it in-degree} of $v$. The graph that is obtained from $D$ by replacing each arc by an edge between the same two vertices is called the {\it underlying graph} of $D$. We call $D$ {\it weakly connected} if its underlying graph is connected. We call $D$ {\it strongly connected} if $|\delta_D^+(X)|\geq 1$ for every nonempty, proper subset $X$ of $V$. More generally, we say that $D$ is {\it $k$-arc-connected} if $|\delta_D^+(X)|\geq k$ for every nonempty, proper subset $X$ of $V$. We call $D$ {\it Eulerian} if $|\delta_D^+(\{v\})|=|\delta_D^-(\{v\})|$ for every $v \in V$.
For some $a \in A$, we denote by {\boldmath $D/a$} the directed graph obtained from $D$ by contracting $a$, that is deleting $a$ and identifying its head and its tail. For some $F=\{a_1,\ldots,a_t\}\subseteq A$, we denote $D/a_1/\ldots/a_t$ by {\boldmath $D/F$}. For some subgraph $H$ of $D$, we abbreviate  $D/A(H)$ to {\boldmath $D/H$}. Let {\boldmath $\cev{D\phantom{'}}$} denote the orientation that arises from $D$ by reversing the orientation of all arcs. A  {\it circuit} is a strongly connected orientation of a cycle. A {\it directed path} is an orientation of a path such that at most one arc enters and at most one arc leaves each vertex. Subscripts may be omitted when the graph or directed graph is clear from the context. We also use basic notions of complexity theory which can be found in Chapter 15 of \cite{koretekgrk}.

\medskip
As one of the first important results in the theory of graph orientations, Robbins proved in 1939 that a graph has a strongly connected orientation if and only if it is 2-edge-connected \cite{robb}. This was later generalized by Nash-Williams \cite{N60}
 who proved that for any positive integer $k$, a graph has a $k$-arc-connected orientation if and only if it is $2k$-edge-connected. This naturally raises the question whether odd edge-connectivity also yields distinctive orientability properties. Our approach to this consists in relaxing the goal to obtain exactly one orientation of the graph to allowing several of them. We say that an arc is {\it deletable} in a $k$-arc-connected orientation of a $(2k+1)$-edge-connected graph if its deletion leaves it $k$-arc-connected. We ask how many orientations are necessary for each edge of the original graph to become a deletable arc in at least one of the orientations. Surprisingly, the number of necessary orientations is bounded by a constant depending only upon $k$. This is a consequence of a theorem of DeVos, Johnson and Seymour \cite{djs}. We focus on  the case $k=1$, meaning we want to find orientations of a 3-edge-connected graph such that for every edge of the graph, the deletion of the associated arc leaves a strongly connected graph in at least one of the orientations. In honor of Andr\'{a}s Frank who proposed this problem and had an immense impact on the development of the theory of graph orientations, we call the minimum number of necessary orientations for a graph $G$ its {\it Frank number} {\boldmath $f(G)$}. Observe that the Frank number of any $4$-edge-connected graph is $1$ as it has a $2$-arc-connected orientation by the theorem of Nash-Williams. On the other hand, any graph $G$ containing a $3$-edge-cut has Frank number at least 2. This follows directly from the fact that in any strongly connected orientation of $G$, there is one arc of the $3$-edge-cut that is oriented differently than the other two arcs. This arc cannot be deletable in this orientation, so at least one more orientation is needed. It is an interesting question to find upper bounds for the Frank number of graphs. A first constant bound can easily be obtained by the following theorem of DeVos, Johnson and Seymour \cite{djs}:

\begin{theo}
Let $G=(V,E)$ be a 3-edge-connected graph. Then there is a partition $\{E_1,\ldots,E_9\}$ of $E$ such that $G-E_i$ is $2$-edge-connected for all $i=1,\ldots,9$.
\end{theo}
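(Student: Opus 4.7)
The plan is first to reformulate the conclusion as a cut condition: $G-E_i$ is $2$-edge-connected if and only if $|E_i\cap\delta_G(X)|\leq d_G(X)-2$ for every nonempty proper $X\subsetneq V$. The binding constraint therefore comes from the $3$-edge-cuts of $G$, each of which must contain at most one edge of any given class; for cuts of size $m\geq 4$ the slack $m-2$ grows, so larger cuts impose looser restrictions. This perspective turns the statement into a covering/coloring problem with a global cut-flavored side condition.

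I would then attempt an induction on $|V(G)|$, reducing first along non-trivial $3$-edge-cuts. If $\delta_G(X)$ is a $3$-edge-cut with $|X|,|V\setminus X|\geq 2$, contract each of the two sides separately to obtain two smaller $3$-edge-connected graphs, apply the induction hypothesis to each, and glue the resulting $9$-partitions together by a suitable relabeling so that the three edges of $\delta_G(X)$ receive three distinct labels in both sub-partitions. To make this merging step work it is natural to strengthen the induction hypothesis so that one may prescribe in advance the three colors on any chosen $3$-edge-cut; the slack of $9$ classes, versus the $3!$ possible prescriptions, is what allows such flexibility.

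The base case is then when $G$ is essentially $4$-edge-connected, so the only $3$-edge-cuts are the stars $\delta_G(\{v\})$ at degree-$3$ vertices. At such vertices the constraint becomes a local proper-edge-coloring condition. A plausible construction is a product partition indexed by $\{1,2,3\}\times\{1,2,3\}$: one coordinate is produced by a proper edge coloring that handles the trivial $3$-cuts, while the other is obtained from a Nash-Williams/Tutte tree-packing applied to a suitable $2$-fold doubling of $G$, which is $6$-edge-connected and hence contains three edge-disjoint spanning trees. The $9$ classes arise naturally from combining the two coordinates.

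The hardest part, I expect, is controlling the cut condition $|E_i\cap\delta_G(X)|\leq d_G(X)-2$ globally for cuts of size $\geq 4$ in the base case: the product construction makes the constraint at the trivial $3$-cuts automatic, but ensuring it at all larger cuts requires exploiting the global structure of the tree-packing. The sharpness of the constant $9$ should ultimately reflect the precise way in which the two coordinates of the product interact with overlapping non-trivial cuts, and verifying this is the main technical hurdle.
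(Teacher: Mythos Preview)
This theorem is not proved in the paper: it is quoted as a result of DeVos, Johnson and Seymour \cite{djs} and used as a black box to derive Corollary~1. There is therefore no ``paper's own proof'' to compare your sketch against.

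As for the sketch itself, the reduction via non-trivial $3$-edge-cuts (with a strengthened induction hypothesis prescribing three distinct colors on a chosen $3$-cut) is a sound and standard opening move; Proposition~\ref{crossfree} in the paper confirms that $3$-edge-cuts are laminar, which is what makes such gluing work. The base case, however, is not convincingly handled. Your first coordinate requires that the three edges at every degree-$3$ vertex receive three distinct colors from a palette of size $3$; in a cubic essentially $4$-edge-connected graph this is exactly a proper $3$-edge-coloring, which the Petersen graph (featured prominently in the paper) does not admit. Your second coordinate, obtained by tree-packing in the doubled graph $2G$, does not obviously yield a well-defined $3$-partition of $E(G)$: each edge has two copies that may land in different trees, and you have not said how to resolve this or why any resolution meets the cut condition. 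Finally, even granting both coordinates, you explicitly leave open the verification of $|E_i\cap\delta(X)|\le d(X)-2$ for cuts of size $\ge 4$, which is the entire content of the base case once the trivial cuts are handled. So the sketch identifies a plausible architecture but leaves the essential technical work undone; consulting the DeVos--Johnson--Seymour manuscript directly would be the right next step.
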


This implies the following:

\begin{coro}
Every $3$-edge-connected graph $G$ satisfies $f(G)\leq 9$.
\end{coro}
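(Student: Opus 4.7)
The plan is to use the partition theorem of DeVos, Johnson and Seymour together with Robbins' theorem to construct the required nine orientations directly, one for each class of the partition.

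First I would apply the theorem to obtain a partition $\{E_1,\ldots,E_9\}$ of $E$ with the property that $G-E_i$ is $2$-edge-connected for every $i$. For each such $i$, Robbins' theorem then guarantees a strongly connected orientation $D_i'$ of $G-E_i$. I would extend $D_i'$ to an orientation $D_i$ of the full graph $G$ by orienting the edges of $E_i$ in an arbitrary way.

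The key observation is that every arc $\vec{e}$ with $e\in E_i$ is deletable in $D_i$: removing $\vec{e}$ leaves a digraph that contains $D_i'$ as a spanning subdigraph, which is strongly connected, so the whole digraph $D_i-\vec{e}$ remains strongly connected. Since $\{E_1,\ldots,E_9\}$ is a partition of $E$, every edge $e\in E$ lies in exactly one class $E_i$ and therefore becomes a deletable arc in the corresponding orientation $D_i$. This yields the desired collection of $9$ orientations, so $f(G)\le 9$.

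There is essentially no obstacle here: the argument is a one-line combination of the cited partition theorem with Robbins' theorem, and the only subtlety is noting that the arbitrary orientation of the edges of $E_i$ does not destroy the deletability property, which is immediate because strong connectivity of $D_i'$ already certifies strong connectivity of $D_i-\vec{e}$ for every $e\in E_i$.
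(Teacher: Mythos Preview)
Your proof is correct and follows exactly the same approach as the paper: apply the DeVos--Johnson--Seymour partition theorem, use Robbins' theorem to orient each $G-E_i$ strongly, extend arbitrarily on $E_i$, and observe that every $e\in E_i$ is deletable in the resulting orientation $D_i$. There is nothing to add.
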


Indeed, by Robbins' Theorem,  for all $i=1,\ldots,9$, there is a strongly connected orientation of $G-E_i$. Giving an arbitrary orientation to the edges of $E_i$ yields an orientation in which the arcs of $\vec{E}_i$ are deletable.
\medskip

The main contribution of this paper is to further narrow down the values attained by the Frank number. We first show a better upper bound. 

  \begin{theo}\label{7}
Every $3$-edge-connected graph $G$ satisfies $f(G)\leq 7$.
\end{theo}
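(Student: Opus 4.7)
The plan is to construct seven orientations $D_1, \ldots, D_7$ of $G$ such that every edge becomes a deletable arc in at least one of them. The central reduction, which is the same one that yields $f(G) \leq 9$ from the corollary above, is this: if $F \subseteq E(G)$ is any set with $G - F$ 2-edge-connected, then by Robbins' theorem $G - F$ admits a strongly connected orientation $D^0$, and extending $D^0$ to an orientation $D$ of $G$ by orienting the edges of $F$ arbitrarily gives an orientation in which every arc of $\vec{F}$ is deletable, because $D - \vec{e}$ contains $D^0$ as a spanning subdigraph for each $e \in F$. It therefore suffices to cover $E(G)$ by seven sets $F_1, \ldots, F_7$ each of whose complements is 2-edge-connected in $G$.

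To improve from $9$ to $7$, I would exploit the structure of 3-edge-cuts. In any strongly connected orientation of $G$, the three arcs of a 3-edge-cut split in a $1$-$2$ pattern between the two shores, and only the lone arc on the minority side can fail to be deletable. Building on this, I would construct two initial orientations $D_1, D_2$ of $G$ whose combined set of non-deletable arcs is a set $N \subseteq E(G)$ of controlled structure, for example so that all 3-edge-cuts of $G$ that contribute edges to $N$ form a laminar family. The remaining five orientations would then be obtained by covering $N$ with five sets whose complements are 2-edge-connected in $G$, where the hope is that after the contractions induced by the minority shores already handled by $D_1$ and $D_2$, the residual instance is tame enough (essentially 4-edge-connected between the fixed shores) that five parts are enough.

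The main obstacle is producing $D_1$ and $D_2$ with their non-deletable arcs jointly confined to such an $N$. This amounts to choosing, consistently across all 3-edge-cuts, a majority and a minority side in each of two independent ways; I expect this to require a cactus-style representation of the minimum cuts of $G$ together with a splitting-off or uncrossing argument to break ties between crossing 3-edge-cuts. Once $D_1, D_2$ and the corresponding set $N$ are in place, covering $N$ by five 2-edge-connected complements should follow either from a refinement of the DeVos-Johnson-Seymour argument on a suitable contraction of $G$ or from an ear-decomposition argument tailored to the remaining 3-edge-cut structure.
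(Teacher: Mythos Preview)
Your proposal is a speculative outline, not a proof; its two central steps are never carried out. For the first step you want orientations $D_1, D_2$ whose joint non-deletable set $N$ is controlled, suggesting that the contributing 3-edge-cuts form a laminar family. But in a 3-edge-connected graph \emph{all} 3-edge-cuts are already non-crossing (Proposition~\ref{crossfree}), so this condition is vacuous and buys nothing. More seriously, the minority arc in each 3-edge-cut is forced by the orientation rather than chosen freely, and you give no construction showing that two orientations can simultaneously realise a prescribed pattern across the entire family of 3-edge-cuts; ``cactus representation plus splitting-off or uncrossing'' is a wish, not an argument. For the second step you assert that $N$ can then be covered by five sets with 2-edge-connected complements, but you offer no justification beyond hoping a ``refinement'' of DeVos--Johnson--Seymour or some ear-decomposition works, when that theorem itself only delivers nine parts.

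The paper's argument is entirely different and does not go through your 2-edge-connected-complement reduction at all. It first reduces to the cubic case via a cubic extension $H_G$ (Section~\ref{reduc}). For cubic $G$ it introduces the \emph{special set} of a cycle packing $\mathcal{C}$, namely the edges outside $\mathcal{C}$ lying in no 3-edge-cut of $G/\mathcal{C}$, and proves such sets deletable via Nash-Williams' well-balanced orientation theorem (Lemma~\ref{cubicmatching}); this is genuinely stronger than what Robbins gives. The combinatorial core (Lemma~\ref{strong7packing}) is then to exhibit seven cycle packings whose special sets cover $E(G)$: take a perfect matching $M$ by Petersen, let $\mathcal{C}_1 = G - M$, observe $G/\mathcal{C}_1$ is 4-edge-connected so its edge set splits into three $T$-joins via two edge-disjoint spanning trees, and from each $T$-join build two $V$-joins of $G$ whose complements yield the remaining six cycle packings. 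None of these ingredients---cubic reduction, special sets, well-balanced orientations, the $T$-join decomposition---appear in your plan.
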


In attempt to improve on this, we also establish a relationship between our problem and a well-known conjecture about matchings in cubic graphs, the conjecture of Berge-Fulkerson mentioned in Section \ref{prem}.

 \begin{theo}\label{bftheo}
Every $3$-edge-connected graph $G$ satisfies $f(G)\leq 5$ unless the conjecture of Berge-Fulkerson fails.
\end{theo}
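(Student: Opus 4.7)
My plan is in three main stages: a reduction to cubic 3-edge-connected graphs, an application of the Berge--Fulkerson conjecture to extract five matchings covering $E(G)$, and a construction of one orientation per matching.

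First, I would reduce to the cubic case: replace each vertex $v$ of degree $d\geq 4$ by a cycle of length $d$, attaching the $d$ original edges at $v$ to distinct vertices of this cycle. The resulting graph $G'$ is cubic and 3-edge-connected, and a family of orientations witnessing $f(G')\leq 5$ descends, upon contracting the inserted cycles back, to a family witnessing $f(G)\leq 5$. So we may assume $G$ is cubic.

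Now apply Berge--Fulkerson: the cubic 3-edge-connected graph $G$ admits perfect matchings $M_1,\ldots,M_6$ with each edge lying in exactly two of them. A counting argument shows $M_1\cup\cdots\cup M_5=E(G)$, since an edge missed by the first five matchings would have to lie in $M_6$ twice. Hence it suffices, for each $i\in\{1,\ldots,5\}$, to exhibit an orientation $O_i$ of $G$ in which every edge of $M_i$ is deletable. For a fixed $i$, the complement $C_i:=G-M_i$ is a disjoint union of cycles. I would orient each cycle of $C_i$ consistently as a directed cycle, and then orient $M_i$ to make the full orientation strongly connected: contract each cycle of $C_i$ to a single vertex to obtain a multigraph $H_i$, which is 3-edge-connected because contractions do not decrease edge-connectivity, and apply Robbins' theorem to orient $H_i$ strongly. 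In the resulting $O_i$, each cycle of $C_i$ remains strongly connected after deleting any single matching arc, so an arc $\vec{e}$ of $\vec{M}_i$ is deletable in $O_i$ if and only if $e$ is a loop of $H_i$ (automatic) or the corresponding arc in the orientation of $H_i$ is deletable there.

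The main obstacle is that making every non-loop arc of $\vec{M}_i$ deletable amounts to choosing a 2-arc-connected orientation of $H_i$, which by Nash-Williams' theorem requires $H_i$ to be 4-edge-connected, whereas only 3-edge-connectivity is guaranteed. To circumvent this, I would exploit the doubled-cover structure of Berge--Fulkerson: any edge $e\in M_i$ that fails to be deletable in $O_i$ still lies in a second matching $M_j$, giving $e$ a second chance to be deletable in $O_j$. The technical core of the proof would then be to coordinate, across $i=1,\ldots,5$, the orientations of the cycles of the $C_i$ and of the matchings $M_i$ in the $H_i$, so that every edge is deletable in at least one of the five resulting orientations. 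This coordination, via a local analysis of the 3-edge-cuts of $G$ together with the exactly-twice covering property of Berge--Fulkerson, is the principal technical challenge I would face.
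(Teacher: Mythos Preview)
Your overall strategy matches the paper's: reduce to the cubic case via a cubic extension, apply Berge--Fulkerson to get six perfect matchings covering each edge twice, observe that the first five already cover $E(G)$, and then show each $M_i$ is deletable by orienting the cycles of $G-M_i$ as circuits and the contracted graph $H_i=G/\mathcal{C}_i$ suitably.

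However, you miss the one observation that makes the argument go through cleanly and renders your proposed ``coordination'' step unnecessary. You write that $H_i$ is only guaranteed to be $3$-edge-connected, but in fact the Berge--Fulkerson structure forces $H_i$ to be $4$-edge-connected. Here is why: for any $3$-edge-cut $\delta(X)$ of the cubic graph $G$, the side $X$ has odd cardinality, so every perfect matching meets $\delta(X)$. Since the three edges of $\delta(X)$ are each covered exactly twice by $M_1,\ldots,M_6$, we get $\sum_{j=1}^{6}|M_j\cap\delta(X)|=6$, hence $|M_i\cap\delta(X)|=1$ for every $i$. Now a $3$-edge-cut of $H_i$ would lift to a $3$-edge-cut of $G$ consisting of three edges of $M_i$, contradicting $|M_i\cap\delta(X)|=1$. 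Thus $H_i$ is $4$-edge-connected, and by Nash--Williams it admits a $2$-arc-connected orientation; combining with the circuit orientations of the cycles of $G-M_i$ gives an orientation of $G$ in which \emph{every} edge of $M_i$ is deletable. Each matching handles itself; no cross-matching coordination is needed. This is exactly the content of the paper's Lemma~\ref{perfect} and Theorem~\ref{fn5}.

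A minor remark on the reduction: the paper first disposes of cut vertices separately (so that $G-v$ is connected for every $v$) before passing to the cubic extension, since otherwise the extension need not be $3$-edge-connected.
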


Further, we prove a stronger bound for two more restricted classes of $3$-edge-connected graphs.

 \begin{theo}\label{3col}
Every $3$-edge-connected $3$-edge-colorable graph $G$ satisfies $f(G)\leq 3$.
\end{theo}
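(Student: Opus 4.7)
The plan is first to observe that a $3$-edge-connected $3$-edge-colorable graph $G$ must be cubic: $3$-edge-connectivity forces minimum degree at least $3$, while $3$-edge-colorability forces maximum degree at most $3$. Fixing a partition $E(G)=M_1\sqcup M_2\sqcup M_3$ into three perfect matchings, I would construct, for each $i\in\{1,2,3\}$, a strongly connected orientation $D_i$ of $G$ in which every edge of $M_i$ becomes a deletable arc; these three orientations together certify $f(G)\leq 3$.

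For the construction of $D_i$, note that $G-M_i$ is $2$-regular and hence a disjoint union of cycles $C_1,\ldots,C_t$. My orientation will make each $C_j$ into a directed circuit and will orient the edges of $M_i$ according to some chosen orientation $\vec{H}_i$ of the multigraph $H_i:=G/(G-M_i)$ (whose edges are precisely those of $M_i$). Since contracting the directed circuits in $D_i$ recovers $\vec{H}_i$, the orientation $D_i$ is strongly connected as soon as $\vec{H}_i$ is, and for every $e\in M_i$ the digraph $D_i-\vec{e}$ is strongly connected as soon as $\vec{H}_i-\vec{e}$ is. Thus, if $\vec{H}_i$ is chosen to be $2$-arc-connected, every edge of $M_i$ will be deletable in $D_i$. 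By Nash-Williams' theorem, such an orientation of $H_i$ exists as soon as $H_i$ is $4$-edge-connected, so the whole statement reduces to this $4$-edge-connectivity.

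The crux, and the main obstacle, is a parity observation: no $3$-edge-cut of a $3$-edge-colored cubic graph is monochromatic. Indeed, any cut of $H_i$ lifts to a cut of $G$ consisting only of $M_i$-edges, so I need only exclude $3$-cuts of $G$ lying in a single color class. If $\delta_G(X)$ is a $3$-edge-cut of a cubic graph, then $3|X|=2|E(G[X])|+3$ forces $|X|$ to be odd; for each color $c$, the identity $|X|=2|M_c\cap E(G[X])|+|M_c\cap\delta_G(X)|$ then forces $|M_c\cap\delta_G(X)|$ to be odd, hence at least $1$. Summing the three contributions yields $|\delta_G(X)|\geq 3$ with equality only when each color contributes exactly one edge, so no monochromatic $3$-cut can exist. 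This gives $H_i$ $4$-edge-connected, and everything else is a routine application of Nash-Williams combined with the contraction-and-lift step sketched above.
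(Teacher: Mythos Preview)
Your proof is correct and follows essentially the same route as the paper: the parity argument showing every $3$-edge-cut meets each color class exactly once, the resulting $4$-edge-connectivity of $H_i=G/(G-M_i)$, and the lift of a $2$-arc-connected orientation of $H_i$ through directed circuits on the cycles of $G-M_i$. The paper packages the last step via Lemma~\ref{perfect} (which in turn appeals to well-balanced orientations through Lemma~\ref{cubicmatching}), whereas you invoke Nash-Williams' orientation theorem directly---a slightly more elementary formulation of the same idea in this $4$-edge-connected case.
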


 \begin{theo}\label{34ec}
Every essentially $4$-edge-connected graph $G$satisfies $f(G)\leq 3$.
\end{theo}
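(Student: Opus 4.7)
I would first reduce the problem to constructing three orientations with prescribed local structure at the degree-$3$ vertices. Let $V_3\subseteq V(G)$ denote the set of degree-$3$ vertices. Since $G$ is essentially $4$-edge-connected, every edge cut of size $3$ is the star $\delta(v)$ of some $v\in V_3$, and every non-trivial edge cut has size at least $4$. Call an arc $\vec{e}$ of a strongly connected orientation $D$ \emph{odd at} $v\in V(e)$ if it is the unique in-arc or out-arc at $v$. A simple cut analysis yields: if $D$ is strongly connected and every non-trivial cut $X$ satisfies $|\delta_D^+(X)|,|\delta_D^-(X)|\geq 2$, then the only non-deletable arcs of $D$ are those odd at some $v\in V_3$.

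Fixing an arbitrary labeling $\delta(v)=\{e_v^1,e_v^2,e_v^3\}$ at each $v\in V_3$, the plan is to build, for each $i\in\{1,2,3\}$, a strongly connected orientation $D_i$ of $G$ satisfying the cut condition above and having $e_v^i$ as the odd edge at each $v\in V_3$. For any edge $e=uv$, the set of indices $i$ in which $e$ is odd at an endpoint in $D_i$ has size at most two (one per endpoint, corresponding to $e=e_u^i$ at $u$ and $e=e_v^j$ at $v$ if $u,v\in V_3$), so $e$ is deletable in at least one $D_i$, giving $f(G)\leq 3$.

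Constructing such $D_i$ reduces to the following lemma: for every $\phi:V_3\to E$ with $\phi(v)\in\delta(v)$ there exists a strongly connected orientation $D$ of $G$ such that $\phi(v)$ is odd at every $v\in V_3$ and every non-trivial cut $X$ has $|\delta_D^+(X)|,|\delta_D^-(X)|\geq 2$. My approach is to form the multigraph $G^\phi$ obtained by adding, for each $v\in V_3$, a parallel copy of $\phi(v)$. Every vertex of $G^\phi$ has degree at least $4$, and non-trivial cuts of $G$ of size at least $4$ remain so in $G^\phi$, so $G^\phi$ is $4$-edge-connected; Nash-Williams supplies a $2$-arc-connected orientation $\widetilde{D}$. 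In $\widetilde{D}$, every $v\in V_3$ has in-degree equal to out-degree equal to $2$, so if the two parallel copies of $\phi(v)$ are coherently oriented (both in the same direction), then the other two edges incident to $v$ are forced to be oriented identically, making $\phi(v)$ the odd edge at $v$ in the orientation $D$ of $G$ obtained by deleting the added parallel copies from $\widetilde{D}$.

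The main obstacle is to guarantee that $\widetilde{D}$ can be chosen both with every parallel pair coherent and with no non-trivial cut of $G$ becoming tight after the deletion. Coherence can be imposed by treating each parallel pair as a weighted arc of capacity $2$, either via a weighted form of Nash-Williams or through Frank's orientation theorem with supermodular constraints; the required feasibility reduces to cut inequalities in $G^\phi$ that follow from essential $4$-edge-connectivity of $G$. The ``no tight cut in $G$'' condition is more delicate: it can fail on a non-trivial cut $X$ of $G$ of size exactly $4$ whose unique $\phi$-edge happens to account, via its two parallel copies, for both out-arcs across $X$ in $\widetilde{D}$. Resolving this typically requires a flipping or rerouting argument shifting an out-arc onto a non-parallel edge of $\delta(X)$ without destroying $2$-arc-connectivity of $\widetilde{D}$; a careful, simultaneous implementation of these repairs across all critical cuts is what I expect to be the technically heaviest part of the proof.
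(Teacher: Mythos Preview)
Your proposal outlines a plausible strategy but stops short of a proof: you explicitly leave unresolved the central technical step, namely the existence of a $2$-arc-connected orientation of $G^\phi$ with coherent parallel pairs that, after deleting the added copies, still leaves at least two arcs in each direction across every non-trivial cut of $G$. This is not a mere technicality. As you yourself note, a non-trivial $4$-cut of $G$ crossed by a $\phi$-edge can lose its second out-arc (or in-arc) upon deletion, and there is no evident local repair that handles all such cuts simultaneously without recreating the problem elsewhere; ``Frank's orientation theorem with supermodular constraints'' does not obviously apply, since the target function you would need (forcing both parallel copies to point the same way \emph{and} forbidding them from jointly saturating a non-trivial cut) is not crossing supermodular. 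Without this lemma the three orientations $D_i$ are not shown to exist, and the argument does not close. A minor additional gap: your ``cut analysis'' claim as stated also requires that every vertex of degree at least $4$ have in- and out-degree at least $2$ in $D$; otherwise a degree-$4$ vertex with in-degree $1$ has a non-deletable in-arc that is not odd at any $v\in V_3$.

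The paper's proof is structurally quite different and sidesteps your obstacle entirely. It first treats the cubic case: Lemma~\ref{matching} shows that \emph{any} matching $M$ of an essentially $4$-edge-connected graph is deletable, by contracting the maximal $2$-edge-connected subgraphs of $G-M$, taking an admissible odd-vertex pairing (Theorem~\ref{fort}), and choosing the Eulerian orientation so that at each degree-$3$ vertex the two non-matching edges are split in/out (Proposition~\ref{eulspecial}). A perfect matching $M_1$ gives the first orientation $D_1$ with the cycles of $G-M_1$ oriented as circuits; Lemma~\ref{cyclearc} then locates one deletable arc inside each such circuit, so that removing these arcs from $E-M_1$ leaves a union of paths, which decomposes into two further matchings $M_2,M_3$ handled again by Lemma~\ref{matching}. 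The non-cubic case is reduced to the cubic one via the cubic extension $H_G$ (Proposition~\ref{red34}(b)), after a short argument (Claim~\ref{cutvertexbridge}) ensuring $G-v$ is $2$-edge-connected for every $v$. Thus the paper never attempts to prescribe the odd edge at every degree-$3$ vertex simultaneously while controlling all non-trivial cuts; it exploits well-balanced orientations and the flexibility of matchings instead.
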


For the lower bound, we show that there are graphs whose Frank number is strictly bigger than 2, more precisely:

  \begin{theo}\label{pet3}
The Frank number of the Petersen graph is 3.
\end{theo}

A drawing of the Petersen graph can be found in Figure \ref{fig0}.

\begin{figure}[h]
    \centering
        \includegraphics[width=.2\textwidth]{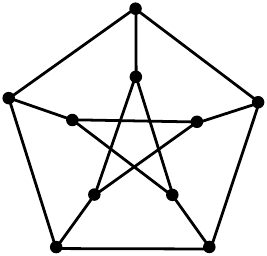}
        \caption{The Petersen graph}\label{fig0}
\end{figure}
\medskip

Given a directed graph $D$, we call a set $F\subseteq A(D)$ {\it deletable} if  $D-f$ is strongly connected for all $f \in F$. 
Given a graph $G$, we call a set $F\subseteq E(G)$ {\it deletable} if there exists an orientation $\vec G$ of $G$ such that $\vec F$ is  deletable in $\vec G$.
\medskip

One of the main difficulties in improving the upper bound on the Frank number consists in finding a useful class of deletable sets. We consider the problem of testing algorithmically whether a set is deletable. 
More formally, we define the following problem:
\medskip

\noindent{\bf DELETABILITY}
\smallskip

\noindent Instance: A graph $G=(V,E)$ and a set $S \subseteq E$.
\smallskip

\noindent Question: Is there an orientation $D$ of $G$ such that $D-\vec{s}$ is strongly connected for all $s \in S$?
\bigskip

The following result shows that an efficient algorithm for DELETABILITY seems out of reach. This implies that a good characterization of deletable sets is hard to obtain.
\begin{theo}\label{nphard}
DELETABILITY is NP-complete for cubic 3-edge-connected graphs.
\end{theo}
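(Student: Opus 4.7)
The problem lies in NP: an orientation $D$ of $G$ is a polynomial-size certificate, and for each $s \in S$ the strong connectivity of $D - \vec{s}$ can be verified in polynomial time by depth-first search.

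For NP-hardness, the plan is to reduce from a well-chosen NP-complete problem, for instance NAE-3-SAT (or standard 3-SAT). Given a formula $\varphi$, I would construct a cubic 3-edge-connected graph $G_\varphi$ together with a set $S_\varphi \subseteq E(G_\varphi)$ such that $(G_\varphi, S_\varphi)$ is a yes-instance of DELETABILITY iff $\varphi$ is satisfiable. The reduction exploits the following local observation, which I would formalize at the start of the argument: if $e \in S$ has endpoint $v$ and $\vec{e}$ leaves $v$ in a feasible orientation, then at least one of the other two edges at $v$ must also leave $v$; otherwise $v$ becomes a sink in $D - \vec{e}$ and strong connectivity fails. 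An analogous ``majority'' constraint holds at every non-trivial 3-edge-cut of $G$, which means that the direction of a single $S$-edge can be propagated along a chain of gadgets and thus transmits a Boolean value.

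Concretely, the construction uses three families of building blocks, glued together along pairs of interface edges: (i) variable gadgets, small cubic 3-edge-connected blocks carrying a distinguished $S_\varphi$-edge whose two possible orientations encode the two truth values of the variable; (ii) clause gadgets, blocks whose $S_\varphi$-edges admit a jointly deletable orientation iff at least one literal of the clause is satisfied by the incoming truth values; and (iii) wiring subgraphs that carry the truth value from each variable gadget to every clause in which it appears, using the local majority constraint to force a consistent orientation all along the wire. The main obstacle I expect is the explicit design and verification of these gadgets. They must be cubic and 3-edge-connected in isolation, and they must enforce exactly the intended local constraints. In particular, the non-trivial 3-edge-cuts introduced by the gluing step must be controlled so as not to impose spurious orientation requirements beyond those coming from $\varphi$, and the global strong connectivity of the resulting orientation must be secured alongside the local deletability conditions, typically via an ``outer'' cycle or backbone through all gadgets.
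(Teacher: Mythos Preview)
Your plan matches the paper's approach---reduction from a not-all-equal 3-SAT variant, exploiting your local majority observation, with variable and clause gadgets tied together by an outer backbone cycle---but what you have written is an outline, not a proof: the gadgets are never specified, and you say so yourself. Since the gadget design is the entire substance of the reduction, this is the gap.

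The paper's construction is simpler than you anticipate, and two of your side remarks point in the wrong direction. First, commit to (monotone) NAE-3-SAT rather than ordinary 3-SAT: then the clause gadget is a \emph{single vertex} $v_C$, none of whose incident edges lie in $S$; strong connectivity at a cubic vertex already forces at least one arc in and one arc out, which is exactly the not-all-equal constraint on the three incoming truth values. Second, the variable gadget for $x_i$ (occurring in $p_i$ clauses) is just an even cycle $K_i$ of length $2p_i$ with \emph{all} of its edges placed in $S$; your majority observation then forces the two $S$-edges at each vertex of $K_i$ to both enter or both leave, so the entire cycle is oriented consistently and encodes a single bit. Bipartition $V(K_i)=A_i\cup B_i$; match $A_i$ to the vertices $v_C$ with $x_i\in C$, and match $\bigcup_i B_i$ to a single backbone cycle $K$ of length $3|\mathcal{C}|$. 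No separate wiring gadgets are needed, and the pieces are \emph{not} required to be $3$-edge-connected in isolation (a cycle is only $2$-edge-connected, a single vertex is trivial); one proves $3$-edge-connectivity only for the assembled graph. The remaining nontrivial work is that proof of $3$-edge-connectivity and, given a feasible NAE-assignment, the explicit construction of one global strongly connected orientation in which every edge of every $K_i$ is deletable.
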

In Section \ref{prem}, we present several classical results we will make use of and make some preparatory observations. Also, we introduce an auxiliary graph that will help to reduce the problems to cubic graphs later. In Section \ref{sec3ec}, we deal with the general case of $3$-edge-connected graphs proving Theorems \ref{7} and \ref{bftheo}. Section \ref{e4ecg} is concerned with essentially $4$-edge-connected graphs, in particular the proof of Theorem \ref{34ec}. In Section \ref{petorient}, we prove Theorem \ref{pet3}.  Theorem \ref{nphard} is proven in Section \ref{algo}. Finally, in Section \ref{conc} we conclude our work and give directions for further research on this topic.  

\section{Preliminaries}\label{prem}

In the first part of this section, we give classical results we will make use of later. In the second part, we make some easy preparatory observations which will prove useful later. In the third part, we introduce a way to construct a cubic graph from an arbitrary graph of minimum degree at least $3$ and give some basic properties of it.

\subsection{Previous results}
\medskip

The following result was proven by Nash-Williams \cite{N60} and is the starting point of our work as it characterizes the graphs admitting a $k$-arc-connected orientation.

\begin{theo}\label{faible}
A graph has a $k$-arc-connected orientation if and only if it is $2k$-edge-connected.
\end{theo}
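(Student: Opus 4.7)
The necessity direction is immediate: if $D$ is a $k$-arc-connected orientation of $G$ and $X$ is a nonempty proper subset of $V$, then both $|\delta_D^+(X)| \geq k$ and $|\delta_D^-(X)| \geq k$, so $d_G(X) = |\delta_D^+(X)| + |\delta_D^-(X)| \geq 2k$.

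For the sufficiency, my plan is to proceed by induction on $|V(G)|$, using Lov\'asz's undirected splitting-off theorem as the main engine. Recall the form convenient here: if $H$ is a graph with a designated vertex $s$ of even degree and $\lambda_H(x,y) \geq 2k$ for every pair $x, y \in V(H) \setminus \{s\}$, then the edges incident to $s$ admit a partition into pairs $\{sx_i, sy_i\}$ such that the graph $H'$ obtained by deleting $s$ and inserting a new edge $x_iy_i$ for each pair still satisfies $\lambda_{H'}(x,y) \geq 2k$ on $V(H) \setminus \{s\}$. The strategy is then: pick a vertex $v$ of $G$ with $d_G(v)$ even, apply Lov\'asz's theorem with $s=v$ to obtain a smaller $2k$-edge-connected graph $G'$ on $V(G) \setminus \{v\}$, inductively orient $G'$ as a $k$-arc-connected digraph $D'$, and undo the splits: for each pair $\{vx_i, vy_i\}$ orient the two edges of $G$ so as to mimic $D'$'s orientation of $x_iy_i$ (if $D'$ orients $x_iy_i$ from $x_i$ to $y_i$, take the arcs $x_i \to v$ and $v \to y_i$ in $G$). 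A routine cut-counting argument then shows that the resulting orientation $D$ is $k$-arc-connected; one uses in particular that $|\delta_D^+(\{v\})| = |\delta_D^-(\{v\})| = d_G(v)/2 \geq k$ at the cut isolating $v$, and that $|\delta_D^+(X)| = |\delta_{D'}^+(X \setminus \{v\})| \geq k$ at every other nonempty proper cut.

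The principal obstacle is the parity requirement on $d_G(v)$: Lov\'asz's theorem only splits at an even-degree vertex, whereas $G$ may have odd-degree vertices of degree as low as $2k+1$. To handle this I would strengthen the induction hypothesis to the \emph{local} claim that every graph $G$ admits an orientation $D$ with $|\delta_D^+(X)| \geq \lfloor d_G(X)/2 \rfloor$ for every nonempty proper $X \subsetneq V$; the theorem then follows immediately since $2k$-edge-connectivity forces $\lfloor d_G(X)/2 \rfloor \geq k$. With this stronger target one can add an auxiliary matching $F$ pairing up the odd-degree vertices of $G$ to obtain an Eulerian supergraph $G^+ \supseteq G$, run the splitting-off induction on $G^+$, and at the end delete the arcs corresponding to $F$. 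The success of this approach hinges on showing that the loss of the $F$-arcs at a cut $X$ is absorbed by the slack $d_{G^+}(X) - d_G(X)$ on cuts of odd size, so that the stronger lower bound $\lfloor d_G(X)/2 \rfloor$ survives the restriction. This is the crux of Nash-Williams' original argument.
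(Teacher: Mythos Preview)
Your proposed strengthening --- that every graph admits an orientation with $|\delta_D^+(X)| \geq \lfloor d_G(X)/2 \rfloor$ for \emph{every} nonempty proper $X$ --- is false, and this is where the plan breaks. Take $G = K_4$. Each $2$-element set $X$ has $d_G(X) = 4$, so you would need $|\delta_D^+(X)| = |\delta_D^-(X)| = 2$ for all six such $X$. The singleton cuts already force every out-degree to lie in $\{1,2\}$; since the out-degrees sum to $6$, exactly two vertices, say $u$ and $v$, have out-degree $2$. Then $u$ and $v$ together have four out-arcs, one of which is the arc on the edge $uv$; the other three all leave $\{u,v\}$, so $|\delta_D^+(\{u,v\})| = 3$, whichever way $uv$ is directed. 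The correct strengthening is \emph{well-balanced}: $|\delta_D^+(X)| \geq \lfloor \lambda_G(u,v)/2 \rfloor$ whenever $u \in X$ and $v \notin X$. This is genuinely weaker because $\lambda_G(u,v)$ can be strictly smaller than $d_G(X)$ (as it is for the $2$-cuts of $K_4$).

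The paper does not prove Theorem~\ref{faible} directly: it records it as a classical theorem of Nash-Williams and observes that it follows from the stronger Theorem~\ref{fort} (every graph has an admissible odd-vertex pairing), since a well-balanced orientation of a $2k$-edge-connected graph is automatically $k$-arc-connected. Your final paragraph is heading toward exactly this reduction --- add a pairing $F$, take an Eulerian orientation of $G+F$, restrict --- but the cut-by-cut inequality you target cannot survive the deletion of $\vec{F}$; only the $\lambda$-based one can, and showing that \emph{some} pairing $F$ achieves it is precisely the content of Theorem~\ref{fort}, which neither you nor the paper proves here. With the local claim replaced by the well-balanced condition, your route and the paper's coincide.
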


In fact, there is an immense strengthening of this theorem. For any two vertices $u,v$ of a graph $G$, let {\boldmath $\lambda_G(u,v)$} be the maximum number of edge-disjoint paths between $u$ and $v$. An orientation $D$ of $G$ is called {\it well-balanced} if for any $u,v \in V(G)$, there exist at least $\lfloor \frac{\lambda_G(u,v)}{2}\rfloor$ directed paths from $u$ to $v$ and also from $v$ to $u$ in $D$. An {\it odd-vertex pairing} of $G$ is a perfect matching in the complete graph whose vertex set is the set of vertices of odd degree of $G$. An odd-vertex pairing $P$ is called {\it admissible} if the restriction of any Eulerian orientation of $G+P$ to $E(G)$ yields a well-balanced orientation of $G$. Nash-Williams \cite{N60} proved the following:

\begin{theo}\label{fort}
Every graph has an admissible odd-vertex pairing.
\end{theo}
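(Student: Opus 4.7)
The plan is to prove Theorem \ref{fort} by induction combined with splitting-off operations, following the approach that has become standard for orientation theorems of this flavour.

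The first observation is reductive: if $P$ is any odd-vertex pairing, then $G+P$ is Eulerian and hence admits an Eulerian orientation. In any such Eulerian orientation $D$ of $G+P$, every cut $X \subseteq V$ satisfies $|\delta_D^+(X)| = |\delta_D^-(X)|$, which, when restricted to $E(G)$, gives the identity $|\delta_{D|_G}^+(X)| - |\delta_{D|_G}^-(X)| = |\delta_{D|_P}^-(X)| - |\delta_{D|_P}^+(X)|$ and therefore $|\delta_{D|_G}^+(X)| \geq \frac{1}{2}(d_G(X) - d_P(X))$. The key question becomes: can $P$ be chosen so that for every pair $u, v \in V$ and every minimum $uv$-cut $X$, the quantity $d_P(X)$ is small enough that the induced orientation sends at least $\lfloor \lambda_G(u,v)/2 \rfloor$ arcs of $G$ in each direction across $X$?

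To organize the induction, I would take a minimum counterexample $G$ and look for a vertex at which an admissible splitting-off can be performed. A splitting-off at $v$ replaces two edges $uv$ and $vw$ by a single edge $uw$; by the Lov\'asz splitting-off theorem, at any vertex of even degree that is not a cut-vertex there exists a complete admissible splitting preserving $\lambda_G(x,y)$ for all $x, y \neq v$. Applying this to a suitable vertex (possibly after attaching an auxiliary vertex joined to all odd-degree vertices to make the graph Eulerian) yields a smaller graph $G'$, to which induction gives an admissible pairing $P'$. One then lifts $P'$ back to a pairing $P$ of $G$ by reversing the splits and using the recovered edges to pair the odd-degree vertices.

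The main obstacle is twofold. First, splitting off only preserves $\lambda$ between vertices different from $v$, so one has to show that the finitely many affected pairs can still be accommodated; this forces a delicate choice of the splitting. Second, and more seriously, one must ensure that the pair edges of $P$ traverse each minimum cut $X$ with exactly the parity dictated by $d_G(X)$, since a mismatch of one instantly destroys the $\lfloor \lambda_G(u,v)/2 \rfloor$ bound on one side. I expect the hard technical work to be a cut-uncrossing argument exploiting the submodularity of $d_G$ to control the family of tight cuts simultaneously for all pairs, and this is the step in which the real depth of Nash-Williams' theorem resides.
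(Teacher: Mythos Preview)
The paper does not prove Theorem~\ref{fort}. It is listed in Section~\ref{prem} under ``Previous results'' and attributed to Nash-Williams~\cite{N60}; the paper simply cites it and uses it as a black box (in the proofs of Lemmas~\ref{cubicmatching} and~\ref{matching}). There is therefore no proof in the paper against which to compare your proposal.

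As for the proposal itself: what you have written is not a proof but a plan, and you say so explicitly (``The plan is to prove\ldots'', ``I would take\ldots'', ``I expect the hard technical work to be\ldots''). You correctly identify the general shape of the modern approach via splitting-off, and you are also right that the decisive difficulty lies in controlling the parity of $d_P(X)$ across \emph{all} tight cuts simultaneously. But you have not actually carried out that step, nor have you specified which splitting-off theorem you invoke; Lov\'asz's version requires even degree and preserves global edge-connectivity, whereas here one needs to preserve all pairwise local connectivities $\lambda_G(x,y)$, which is Mader's stronger splitting-off theorem. Even granting Mader, the lifting of $P'$ back to $P$ and the verification that admissibility survives are where the genuine content lies, and your sketch stops short of addressing them. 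In short, the outline is in the right direction for a known proof of Nash-Williams' theorem, but it is not a proof, and in any case the paper neither expects nor provides one.
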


Observe that this implies that every graph has a well-balanced orientation. Also, as well-balanced orientations of $2k$-edge-connected graphs are $k$-arc-connected, Theorem \ref{fort} implies Theorem \ref{faible}.
\medskip

There are several other well-known results we make use of in this article. The first one is about packing spanning trees and is also due to Nash-Williams \cite{NashWilliams1961}.

\begin{theo}\label{rbrcvrnt}
Every $2k$-edge-connected graph has $k$ edge-disjoint spanning trees. 
\end{theo}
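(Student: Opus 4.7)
The plan is to derive Theorem \ref{rbrcvrnt} from the Nash-Williams--Tutte tree-packing theorem, which states that a graph $G$ contains $k$ edge-disjoint spanning trees if and only if, for every partition $\mathcal{P}=\{V_1,\ldots,V_p\}$ of $V(G)$, the number $e_G(\mathcal{P})$ of edges of $G$ joining vertices in distinct parts satisfies $e_G(\mathcal{P}) \geq k(p-1)$. This characterization converts the spanning-tree-packing question into a purely numerical condition on cuts between parts, which is precisely the kind of information supplied by edge-connectivity.

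Given a $2k$-edge-connected graph $G$, I would verify the partition inequality directly. Let $\mathcal{P}=\{V_1,\ldots,V_p\}$ be any partition of $V(G)$. If $p=1$, the inequality is trivial, so assume $p\geq 2$. Then each $V_i$ is a nonempty proper subset of $V(G)$, so by $2k$-edge-connectivity, $d_G(V_i)\geq 2k$. Each edge with endpoints in two different parts is counted in exactly two of the quantities $d_G(V_i)$, while each edge with both endpoints in a single part is counted zero times, so
\[
e_G(\mathcal{P}) \;=\; \tfrac{1}{2}\sum_{i=1}^p d_G(V_i) \;\geq\; \tfrac{1}{2}\cdot 2kp \;=\; kp \;\geq\; k(p-1).
\]
Applying the Nash-Williams--Tutte theorem then yields $k$ edge-disjoint spanning trees in $G$.

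The main obstacle is really that this argument leans on the full strength of the Nash-Williams--Tutte characterization; once that tool is available, the edge-connectivity hypothesis plugs in with only a degree-counting step. A self-contained proof without invoking the characterization would require either a matroid-union argument or the original inductive exchange proof, both considerably more intricate than the counting sketched above and, since Theorem \ref{rbrcvrnt} is quoted here purely as a black box for later use, not worth reproving from scratch.
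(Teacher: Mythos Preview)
The paper does not give its own proof of Theorem \ref{rbrcvrnt}; it is listed under ``Previous results'' and simply attributed to Nash-Williams \cite{NashWilliams1961} as a black box for later use. Your argument is the standard and correct derivation from the Nash-Williams--Tutte tree-packing theorem, and indeed you already observe in your last paragraph that the theorem is quoted here only as a tool, so there is nothing further to compare.
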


The next one concerns matching theory and was proven by Petersen \cite{petersen}.

\begin{theo}\label{pettheo}
Every cubic $2$-edge-connected graph has a perfect matching. 
\end{theo}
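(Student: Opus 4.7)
The plan is to derive the statement from Tutte's matching theorem, which asserts that a graph $H$ has a perfect matching if and only if $o(H-S)\leq |S|$ for every $S\subseteq V(H)$, where $o(\cdot)$ denotes the number of connected components of odd order. I fix a cubic $2$-edge-connected graph $G=(V,E)$ and an arbitrary $S\subseteq V$, and verify this inequality.

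First I dispose of the case $S=\emptyset$ by a parity observation: since $\sum_{v\in V} d_G(v)=3|V|$ must be even, $|V|$ itself is even, and $2$-edge-connectivity forces $G$ to be connected, so the unique component of $G$ has even order and $o(G)=0\leq 0=|\emptyset|$. Next, for $S\neq\emptyset$, the key step is the claim that every odd component $C$ of $G-S$ satisfies $d_G(V(C))\geq 3$. Indeed, the handshake identity $\sum_{v\in V(C)} d_G(v)=2|E(G[V(C)])|+d_G(V(C))$ together with $|V(C)|$ odd and $G$ cubic gives $d_G(V(C))\equiv 3|V(C)|\equiv 1\pmod 2$, so $d_G(V(C))$ is odd; meanwhile, $V(C)$ is a nonempty proper subset of $V$ and $G$ is $2$-edge-connected, so $d_G(V(C))\geq 2$. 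Combining these two constraints forces $d_G(V(C))\geq 3$.

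To finish, I sum these edge-cut lower bounds over all odd components $C_1,\dots,C_q$ of $G-S$ and observe that each edge counted on the left has one endpoint in $S$, so
\[
3q \;\leq\; \sum_{i=1}^{q} d_G(V(C_i)) \;\leq\; \sum_{v\in S} d_G(v) \;=\; 3|S|,
\]
yielding $o(G-S)=q\leq |S|$ as required. The proof then concludes by invoking Tutte's theorem. The only slightly delicate point is the parity upgrade from $d_G(V(C))\geq 2$ to $d_G(V(C))\geq 3$, which is precisely where cubicity is used; everything else is a direct application of Tutte's criterion.
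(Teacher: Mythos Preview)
Your argument is correct: the parity upgrade from $d_G(V(C))\geq 2$ to $d_G(V(C))\geq 3$ is exactly the place where cubicity enters, and the counting inequality $\sum_i d_G(V(C_i))\leq 3|S|$ is justified because any edge leaving an odd component of $G-S$ must land in $S$ (otherwise two distinct components would be joined). This is the standard modern derivation of Petersen's theorem from Tutte's $1$-factor criterion.

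As for comparison with the paper: there is nothing to compare. The paper does not prove this statement; it is listed in the ``Previous results'' subsection and simply attributed to Petersen with a citation. So your proposal supplies a proof where the paper gives none, and the route you chose (via Tutte's theorem) is the expected one.
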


The next one concerns minimum edge cuts and can for example be found as Theorem 7.1.2 in \cite{book}.

\begin{prop}\label{crossfree}
Let $G$ be a $3$-edge-connected graph and let $\delta(X)$ and $\delta(X')$ be $3$-edge-cuts of $G$. Then $\delta(X)$ and $\delta(X')$ are not crossing, i.e. one of $X-X'$, $X'-X$, $X \cap X'$  or $V(G)-(X \cup X') $ is empty.
\end{prop}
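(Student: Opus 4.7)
My plan is to argue by contradiction. Suppose the four corner sets $a := X \cap X'$, $b := X \setminus X'$, $c := X' \setminus X$, $d := V(G) \setminus (X \cup X')$ are all nonempty. Since they partition $V(G)$ and each is nonempty, each is a nonempty proper subset of $V(G)$, so by 3-edge-connectivity $d(a), d(b), d(c), d(d) \geq 3$; moreover $X \cup X'$ is a proper subset of $V(G)$ (as $d \neq \emptyset$), so $d(X \cup X') \geq 3$ as well.

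Next I would invoke the two standard identities for the cut function at a crossing pair:
$$d(X) + d(X') = d(X \cap X') + d(X \cup X') + 2\,e(b,c) = d(X \setminus X') + d(X' \setminus X) + 2\,e(a,d),$$
where $e(\cdot,\cdot)$ counts edges between two disjoint vertex sets. Since the left-hand side equals $6$ and each cut on the right is at least $3$, both identities force $e(b,c) = e(a,d) = 0$ and $d(a) = d(b) = d(c) = d(X \cup X') = 3$ (whence $d(d) = 3$ as well).

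With $e(a,d) = e(b,c) = 0$, only the four edge counts $p := e(a,b)$, $q := e(a,c)$, $s := e(b,d)$, $t := e(c,d)$ remain, and the degree equations read $d(a) = p + q = 3$, $d(b) = p + s = 3$, $d(c) = q + t = 3$, $d(d) = s + t = 3$, while $d(X) = q + s = 3$. Combining $q + s = 3$ with $p + s = 3$ gives $p = q$, so $d(a) = 2p = 3$, contradicting the integrality of $p$. The main conceptual point is noticing that once submodularity pins every relevant cut to the odd value $3$, a one-line parity argument finishes the job; there is no substantial technical obstacle beyond this.
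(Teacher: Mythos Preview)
Your argument is correct. Note, however, that the paper does not supply its own proof of this proposition: it is quoted as a known fact (Theorem~7.1.2 in Frank's book \cite{book}), so there is nothing to compare against directly. What you wrote is precisely the standard uncrossing proof for minimum odd cuts: the two submodular-type identities force all four corner degrees to equal $3$ and kill the diagonal edge counts, after which the parity contradiction $2p=3$ is immediate. This is exactly the argument one finds in the cited reference, so your write-up would serve perfectly well as a self-contained replacement for the citation.
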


Let $G=(V,E)$ be a graph and $T\subseteq V$. A {\it $T$-join} is defined to be a set $F \subseteq E$ such that the set of odd degree vertices of $G(F)$ is $T$. We use the following characterization of the existence of $T$-joins that can be found as Proposition 12.7 in \cite{koretekgrk}.

\begin{prop}\label{tjn}
Let $G=(V,E)$ be a graph and $T \subseteq V$. Then $G$ contains a $T$-join if and only if every connected component of $G$ contains an even number of elements of $T$. 
\end{prop}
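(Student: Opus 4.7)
The plan is to handle the two directions of the equivalence separately. The forward implication will follow from the handshake lemma applied to each connected component, while the reverse implication admits a constructive argument in which I pair up the elements of $T$ and form a symmetric difference of connecting paths.

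For the forward direction, suppose $F \subseteq E$ is a $T$-join and let $C$ be the vertex set of any connected component of $G$. Since every edge of $F$ lies inside a single component, the subgraph formed by the edges of $F$ with both endvertices in $C$ has odd-degree vertex set exactly $T \cap C$. The sum of degrees in any finite graph is even, hence the number of odd-degree vertices is even, so $|T \cap C|$ is even.

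For the reverse direction, it suffices by treating each component separately to assume $G$ is connected and $|T|=2k$ is even. I enumerate $T=\{t_1,\ldots,t_{2k}\}$, use connectivity to pick for each $i \in \{1,\ldots,k\}$ a path $P_i$ in $G$ from $t_{2i-1}$ to $t_{2i}$, and define $F=E(P_1)\triangle E(P_2)\triangle \cdots \triangle E(P_k)$. The key identity is that for any edge sets $A,B \subseteq E$ and any vertex $v$, the degree of $v$ in $G(A \triangle B)$ is congruent modulo $2$ to the sum of the degrees of $v$ in $G(A)$ and $G(B)$; extended inductively to $k$ sets, this shows that the parity of the degree of $v$ in $G(F)$ equals the number of indices $i$ such that $v$ has odd degree in $P_i$, modulo $2$. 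A vertex of $T$ is an endpoint of exactly one $P_i$ and so has odd degree there and even degree everywhere else; a vertex outside $T$ has even degree in every $P_i$. Hence the set of odd-degree vertices of $G(F)$ is exactly $T$, so $F$ is the desired $T$-join.

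The principal technical point I would single out is the parity identity for symmetric differences, since once this is in hand the pairing-of-endpoints construction delivers the $T$-join with very little further work; the remaining obstacle is purely organizational, namely verifying carefully that internal vertices of paths contribute an even amount to the degree count under iterated symmetric differences.
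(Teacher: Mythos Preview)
Your proof is correct. The paper itself does not supply a proof of this proposition; it merely quotes the result as Proposition~12.7 of Korte--Vygen and uses it as a black box. Your argument is the standard textbook one: handshake for the forward direction, and pairing the elements of $T$ by paths and taking the symmetric difference for the converse. The parity identity $d_{G(A\triangle B)}(v)\equiv d_{G(A)}(v)+d_{G(B)}(v)\pmod 2$ is exactly the right tool, and your verification that each $t\in T$ is an endpoint of precisely one $P_i$ (hence contributes odd degree exactly once) while every other vertex has even degree in every $P_i$ is accurate under the paper's definition of a path as a simple path. There is nothing to compare against in the paper, and nothing to correct in your write-up.
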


The following theorem is due to Menger \cite{men} and is a fundamental characterization of $k$-edge-connected graphs.

\begin{theo}\label{mentheo}
A graph $G=(V,E)$ is $k$-edge-connected if and only if $\lambda_G(u,v)\geq k$ for all $u,v \in V$.
\end{theo}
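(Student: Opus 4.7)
The plan is to prove both directions of Menger's theorem for edge-connectivity. The easy direction is the ``if'' part: suppose $\lambda_G(u,v)\geq k$ for all $u,v\in V$. Take any nonempty, proper subset $X\subsetneq V$ with $u\in X$ and $v\in V-X$. Each of the $k$ edge-disjoint $u$--$v$ paths must use at least one edge of $\delta_G(X)$, and since the paths are edge-disjoint these $k$ contributions are distinct, giving $d_G(X)\geq k$. Hence $G$ is $k$-edge-connected.

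For the ``only if'' direction, my approach would be to reduce it to the integral max-flow min-cut theorem. Fix $u,v\in V$. Construct an auxiliary directed graph $D$ by replacing every edge $xy\in E$ with two oppositely oriented arcs $xy$ and $yx$, each of unit capacity. By the classical max-flow min-cut theorem and integrality of maximum flows with integer capacities, the maximum number of arc-disjoint directed $u$--$v$ paths in $D$ equals the minimum capacity of a directed cut separating $u$ from $v$. Any such directed cut has the form $\delta_D^+(X)$ for some $X$ with $u\in X$, $v\in V-X$, and its capacity equals $d_G(X)\geq k$ by $k$-edge-connectivity of $G$. Therefore, there are at least $k$ arc-disjoint directed $u$--$v$ paths $P_1,\dots,P_k$ in $D$.

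It remains to convert these into $k$ edge-disjoint undirected paths in $G$, which is the main obstacle, since two paths $P_i,P_j$ might traverse oppositely oriented copies of the same edge of $G$. To handle this, I would iteratively apply an ``anti-parallel cancellation'' step: whenever two of the current paths use the two arcs corresponding to a single edge $xy$ in opposite directions, splice them by swapping their tails at $x$ and $y$, which strictly reduces the total number of arcs used while preserving the fact that we have $k$ arc-disjoint $u$--$v$ walks whose union is an edge-disjoint family in $D$. Since this quantity decreases monotonically, the process terminates with a family using no such anti-parallel pair; these paths then correspond to $k$ edge-disjoint $u$--$v$ paths in $G$, yielding $\lambda_G(u,v)\geq k$.

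An alternative I would consider is a direct induction on $|E(G)|$ via edge-splitting or augmenting-path arguments, but the max-flow reduction is cleaner and avoids delicate case analysis. The hard part in either version is exactly the conversion from arc-disjoint directed paths (or integer flows) back to edge-disjoint undirected paths; the cancellation argument sketched above is the standard way around this.
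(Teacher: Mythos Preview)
The paper does not actually prove this statement: Theorem~\ref{mentheo} is listed in Section~\ref{prem} under ``Previous results'' and is simply attributed to Menger with a citation, so there is no proof in the paper to compare against. Your task here was only to recognise it as a classical tool being quoted, not to supply a proof.

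That said, your argument is essentially correct and is one of the standard proofs. Two small points of exposition are worth tightening. First, after the splicing step the objects $P_i',P_j'$ are in general $u$--$v$ \emph{walks}, not paths; you acknowledge this, but you should say explicitly that each walk can be shortcut to a $u$--$v$ path using only a subset of its arcs, so arc-disjointness with the remaining paths is preserved and the total arc count still drops. Second, the cleaner textbook variant avoids the splicing entirely: take an integral max $u$--$v$ flow in the bidirected graph, cancel flow on anti-parallel arc pairs (which does not change the flow value), and only then decompose the resulting acyclic flow into $k$ arc-disjoint directed paths; these automatically project to edge-disjoint undirected paths. Either route is fine, but the flow-first version sidesteps the walk-to-path cleanup.
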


Further, we mention an intensively studied conjecture which was proposed independently by Berge and Fulkerson \cite{bergefulk}.

\begin{conj}\label{bf}
Every cubic $2$-edge-connected graph has a set of six perfect matchings such that every edge is contained in exactly two of them.
\end{conj}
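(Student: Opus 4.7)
I should acknowledge up front that Conjecture \ref{bf} is the Berge-Fulkerson conjecture, open since the early 1970s, so any sketch I give is only a map of the landscape rather than a genuine proof plan. The authors state it without proof and invoke it only as a hypothesis in Theorem \ref{bftheo}. Nonetheless, here is how I would try to approach it and where the obstacle lies.

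The first reduction is to snarks. A 3-edge-colorable cubic graph trivially satisfies the conjecture: each of the three color classes is a perfect matching, so taking each twice yields six perfect matchings covering every edge exactly twice. Thus one may assume $G$ is cubic, 2-edge-connected, and has chromatic index 4. A second reduction, using Proposition \ref{crossfree} applied to 3-edge-cuts (together with the observation that each of the six matchings must meet a 3-edge-cut in exactly one edge), allows one to split along nontrivial 3-edge-cuts and combine Berge-Fulkerson families of the two smaller cubic graphs. Iterating, one reduces to the cyclically 4-edge-connected case. The natural plan is then to induct on $|V(G)|$ along some further local reduction (removal of a short cycle, resolution of a 4-edge-cut, Kempe-chain exchanges on pairs of matchings), and at each step to show that a family of six matchings on the reduced graph recombines at the reduction site into one on $G$.

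A complementary and more global approach uses the perfect matching polytope of Edmonds. For a bridgeless cubic graph, a classical argument (Edmonds plus the fact that the fractional chromatic index of a cubic bridgeless graph is 3) shows that the all-$\tfrac{1}{3}$ vector lies in the perfect matching polytope. Hence there is a convex combination of perfect matchings giving weight $\tfrac{1}{3}$ to each edge. The conjecture asks that one may choose such a combination with denominator 3 supported on (at most) six matchings counted with multiplicity. So the heart of the matter is an integrality gap between the fractional and the integer statement.

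The main obstacle is precisely this gap, and it is what has defeated all attempts for half a century; the best known unconditional bounds only yield covers of the edge set by five perfect matchings, or fractional covers, rather than the exact double cover by six. I would therefore not expect to resolve the conjecture, and I would recommend reading Conjecture \ref{bf} simply as the hypothesis that feeds into Theorem \ref{bftheo}.
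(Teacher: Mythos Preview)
Your assessment is correct: Conjecture~\ref{bf} is the Berge--Fulkerson conjecture, and the paper does not prove it. It is stated in the preliminaries purely as a hypothesis to be fed into Theorem~\ref{bftheo} (via Theorem~\ref{fn5}), exactly as you say. There is therefore no ``paper's own proof'' to compare against, and your recognition that no proof is expected here is the right response. The additional discussion of reductions to snarks, splitting along $3$-edge-cuts, and the perfect matching polytope is accurate background but goes beyond what the paper requires; the paper only needs the conclusion that, assuming the conjecture, six perfect matchings exist each meeting every $3$-edge-cut exactly once, which is what drives Lemma~\ref{perfect} and Theorem~\ref{fn5}.
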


We also consider the following algorithmic problem which is well-known in the literature:

\bigskip
\noindent {\bf Monotone Not-all-equal-3SAT(MNAE3SAT)}

\smallskip
\noindent Instance: A set $X$ of boolean variables, a formula consisting of a set $\mathcal{C}$ of clauses each containing 3 distinct variables, none of which are negated.

\smallskip
\noindent Question: Is there a truth assignment to the variables of $X$ such that every clause in $\mathcal{C}$ contains at least one true and at least one false literal?
\bigskip

This problem will be used in the reduction in Section \ref{algo} which is justified by the following result due to Schaefer \cite{schaefer}.
\begin{theo}\label{sathard}
MNAE3SAT is NP-complete.
\end{theo}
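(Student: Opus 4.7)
The plan is to establish Theorem \ref{sathard} in two parts. Membership in NP is immediate: given a truth assignment to the variables of $X$, one checks in linear time that every clause of $\mathcal{C}$ contains at least one true and at least one false literal. The substantial content is NP-hardness, which I would prove by a polynomial-time reduction from a known NP-complete problem, most naturally 3SAT or its NAE cousin.

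I would proceed in two reduction steps. Starting from a 3SAT instance $\varphi$, I would first pass to NAE-3SAT (where negations are still allowed): for each clause $(\ell_1\vee\ell_2\vee\ell_3)$, introduce a fresh variable $w$ and a global variable $s$, and replace the clause by the two NAE clauses $(\ell_1,\ell_2,w)$ and $(\neg w,\ell_3,s)$. A standard verification shows that $\varphi$ is satisfiable if and only if the resulting NAE-3SAT formula admits a satisfying assignment in which $s$ is set to False. The second step is to eliminate negations. For each variable $x$ of the NAE instance, introduce two fresh variables $x^+$ and $x^-$, intended to carry the values of $x$ and $\neg x$, and replace each occurrence of a literal by the corresponding positive version; this produces a monotone NAE-3SAT formula.

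The main obstacle is enforcing $x^+\neq x^-$ for every variable using only monotone NAE constraints. A single clause $(x^+,x^-,z)$ is not sufficient since it is satisfied by any assignment in which $x^+=x^-\neq z$, and, more fundamentally, every monotone NAE constraint is invariant under global complementation, so one can never single out a specific vertex as true or false. I would therefore design a small gadget consisting of a handful of auxiliary variables together with monotone NAE clauses whose satisfying assignments, up to the global flip, force $x^+$ and $x^-$ to receive opposite values; roughly, one couples $x^+$ and $x^-$ to two auxiliary variables that are themselves forced (again up to the flip) into opposite classes by a rigid sub-formula. The delicate step is verifying that (i) this rigidity holds up to the global symmetry, and (ii) the global flip is harmless for the reduction, since a satisfying assignment and its complement yield the same back-translation to $\varphi$. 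Once such a gadget is in place, the two directions of the reduction are routine: a satisfying assignment of $\varphi$ yields one for the MNAE-3SAT instance by setting $x^+$ to the truth value of $x$ and $x^-$ to its negation, and conversely any satisfying MNAE assignment, after possibly flipping globally so that the designated global variable evaluates to False, defines a truth assignment for $\varphi$ that satisfies every clause.
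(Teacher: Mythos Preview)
The paper does not prove Theorem~\ref{sathard}; it simply quotes it as a known result of Schaefer~\cite{schaefer} and uses it as a black box for the reduction in Section~\ref{algo}. So there is no ``paper's own proof'' to compare against; you are supplying an argument that the authors deliberately outsource.

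Your outline follows the standard textbook route (3SAT $\to$ NAE-3SAT $\to$ monotone NAE-3SAT) and is sound in spirit, but as written it is only a plan: the one place that needs actual work---the gadget enforcing $x^{+}\neq x^{-}$ using monotone NAE clauses on \emph{distinct} variables---is left at the level of ``I would design a small gadget''. Since the paper's definition of MNAE3SAT explicitly requires three distinct variables per clause, you cannot use the cheap trick NAE$(x^{+},x^{-},x^{-})$. A concrete gadget that does the job: introduce three fresh auxiliary variables $a,b,c$ (local to each $x$) together with the four clauses
\[
\text{NAE}(a,b,c),\quad \text{NAE}(x^{+},x^{-},a),\quad \text{NAE}(x^{+},x^{-},b),\quad \text{NAE}(x^{+},x^{-},c).
\]
If $x^{+}=x^{-}$ then each of the last three clauses forces $a,b,c$ to take the opposite value, whence $a=b=c$, contradicting the first clause; conversely, if $x^{+}\neq x^{-}$ the last three clauses are automatically satisfied and one may set $a,b,c$ to any not-all-equal pattern. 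With this gadget inserted, your two-step reduction goes through, and the global-complementation symmetry you note handles the ambiguity in the back-translation exactly as you describe.
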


\subsection {Preparatory results}
\medskip

The following two results show that connectivity properties are maintained when contracting or blowing up sufficiently connected subgraphs. As they are of basic nature, they are given without proof.

  \begin{prop}\label{dgcnnctd}
  Let $G$ be a graph.
  
(a) If $G$ is $k$-edge-connected, then so is any contraction of $G.$

(b) If $G$ is essentially $k$-edge-connected, then so is any contraction of $G.$
  \end{prop}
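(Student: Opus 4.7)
The plan is to translate every edge cut of a contraction $G/F$ into an edge cut of $G$ of the same size, from which both parts will follow almost automatically; the only mildly delicate point is to check that a trivial side in $G$ forces a trivial side in $G/F$.

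First I would fix $F \subseteq E(G)$ and, for each $w \in V(G/F)$, let $C_w \subseteq V(G)$ denote its preimage under the contraction map; the $C_w$ are the connected components of $G(F)$ (with isolated vertices counted as singleton components) and partition $V(G)$. For any nonempty proper $Y \subsetneq V(G/F)$ I would set $\tilde Y := \bigcup_{w \in Y} C_w$, which is itself a nonempty proper subset of $V(G)$. Because every edge of $F$ has both endpoints inside a single $C_w$, no edge of $F$ crosses the partition $(\tilde Y, V(G)\setminus \tilde Y)$, and the remaining edges of $\delta_G(\tilde Y)$ correspond bijectively to the edges of $\delta_{G/F}(Y)$. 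This yields the key identity $d_{G/F}(Y) = d_G(\tilde Y)$.

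Part (a) is then immediate: if $G$ is $k$-edge-connected then $d_{G/F}(Y) = d_G(\tilde Y) \ge k$ for every nonempty proper $Y \subsetneq V(G/F)$. For part (b) I would first invoke (a) with $k-1$ to get that $G/F$ is $(k-1)$-edge-connected, and then, for any $Y$ with $d_{G/F}(Y) = k-1$, use $d_G(\tilde Y) = k-1$ together with essential $k$-edge-connectedness of $G$ to conclude that one of $\tilde Y, V(G)\setminus \tilde Y$ is a singleton $\{v\}$. The main (small) obstacle is precisely this last step: since this singleton is a union of $C_w$'s, the unique $C_w$ containing $v$ must equal $\{v\}$, so the corresponding side of $Y$ in $V(G/F)$ is itself a singleton. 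Thus $\delta_{G/F}(Y)$ has a trivial side, which completes the proof that $G/F$ is essentially $k$-edge-connected.
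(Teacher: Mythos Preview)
Your argument is correct. The paper actually omits the proof of this proposition entirely, remarking that it is ``of basic nature'' and ``given without proof'', so there is nothing to compare against; your lift-back bijection $d_{G/F}(Y)=d_G(\tilde Y)$ is the standard route and handles both parts cleanly, including the observation that a singleton $\tilde Y$ forces a singleton $Y$.
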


 \begin{prop}\label{strnglcnnctd}
 For a subgraph $Q$ of a directed graph $D,$ 
 
 (a) if $D$ is strongly connected, then so is  $D/Q$,
 
 (b) if $D/Q$ and $Q$ are strongly connected, then so is $D.$
 \end{prop}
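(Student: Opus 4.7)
The plan is to treat the two parts separately, reducing each to the defining property that $|\delta^+(\cdot)|\geq 1$ on every nonempty proper vertex subset. Throughout, let $q$ denote the vertex of $D/Q$ obtained by identifying all vertices of $Q$, and for a subset $X$ of $V(D/Q)$ with $q\in X$, let $X'=(X\setminus\{q\})\cup V(Q)$; if $q\notin X$ then set $X'=X$. The key observation is that an arc of $D$ is either entirely inside $V(Q)$ (and then disappears or becomes a loop under contraction) or has at least one endpoint outside $V(Q)$ (and then survives in $D/Q$). In particular, the arcs of $\delta^+_D(X')$ all lie outside $A(Q)$, so there is a natural bijection between $\delta^+_D(X')$ and $\delta^+_{D/Q}(X)$.

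For part (a), I would simply take an arbitrary nonempty proper $X\subsetneq V(D/Q)$, note that $X'$ is then also a nonempty proper subset of $V(D)$ (the edge case $V(Q)=V(D)$ makes $D/Q$ a single vertex, for which strong connectivity is vacuous), apply the strong connectivity of $D$ to get an arc in $\delta^+_D(X')$, and transport it to $\delta^+_{D/Q}(X)$ via the bijection above.

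For part (b), I would take an arbitrary nonempty proper $X\subsetneq V(D)$ and split into two cases. If both $V(Q)\cap X$ and $V(Q)\setminus X$ are nonempty, then $V(Q)\cap X$ is a nonempty proper subset of $V(Q)$, so the strong connectivity of $Q$ produces an arc of $Q$ leaving $V(Q)\cap X$, which is in $\delta^+_D(X)$ as well. Otherwise $V(Q)$ lies entirely inside $X$ or entirely outside $X$; in either case $X$ is the preimage (in the sense above) of a subset $Y\subseteq V(D/Q)$, and one checks that $Y$ is a nonempty proper subset of $V(D/Q)$, so the strong connectivity of $D/Q$ yields an arc in $\delta^+_{D/Q}(Y)$, which pulls back to an arc in $\delta^+_D(X)$.

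The only real obstacle is bookkeeping around the contraction, namely making precise how subsets and arcs correspond between $D$ and $D/Q$ and handling the degenerate situation $V(Q)=V(D)$, in which case (a) is trivial and (b) reduces to noting that $Q$ itself is a strongly connected spanning subgraph of $D$. Beyond that, both statements fall out directly from the definition of strong connectivity.
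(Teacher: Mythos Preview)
Your argument is correct. The paper itself does not prove this proposition at all—it explicitly states that Propositions~\ref{dgcnnctd} and~\ref{strnglcnnctd} ``are of basic nature'' and ``are given without proof''—so there is nothing to compare against; your write-up simply supplies the routine verification the authors chose to omit.
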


 The following observation concerns Eulerian orientations and follows easily from Proposition 5 of \cite{kiralyszigeti}. For the sake of completeness we provide an easy proof for it.

\begin{prop}\label{eulspecial}
Let $G=(V,E)$ be an Eulerian graph and  $\{e_v,f_v\}$ two edges incident to $v$ for all $v\in V'\subseteq V$. Then there is an Eulerian orientation of $G$ such that exactly one of $e_v$ and $f_v$ enters $v$ for  all $v\in V'$.
\end{prop}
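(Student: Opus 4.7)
The plan is to exploit the standard correspondence between Eulerian orientations of an Eulerian graph and so-called transition systems at each vertex, i.e., partitions of the incident edges at $v$ into pairs. Concretely, I would first observe that since $G$ is Eulerian, every vertex $v$ has even degree, so one can choose a partition $\mathcal{P}_v$ of the edges incident to $v$ into pairs. At each $v\in V'$, we insist that $\{e_v,f_v\}$ is one of the pairs in $\mathcal{P}_v$ (the remaining $d_G(v)-2$ edges are then paired arbitrarily, which is possible because $d_G(v)-2$ is even). At vertices outside $V'$ the pairing is completely arbitrary.

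Next I would use these pairings to decompose the edge set $E$ into closed trails: starting from any edge $uv$, enter $v$, leave $v$ along the edge paired with $uv$ in $\mathcal{P}_v$, and iterate. Because every edge at every vertex has a unique partner, this process is deterministic and must eventually return to the starting edge, producing a closed trail. Repeating on the remaining edges yields an edge-disjoint decomposition of $G$ into closed trails $C_1,\ldots,C_t$.

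Finally, I would orient each closed trail $C_i$ consistently in one of its two cyclic directions, obtaining an orientation $D$ of $G$. By construction, whenever a trail $C_i$ visits a vertex $v$ via a pair $\{g,h\}\in\mathcal{P}_v$, one of $g,h$ is an entering arc and the other is a leaving arc. Summing over all pairs at $v$ shows that $D$ is Eulerian, and, crucially, for each $v\in V'$ the designated pair $\{e_v,f_v\}$ contributes exactly one arc entering and one arc leaving $v$, which is the desired conclusion.

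There is no real obstacle here: the argument is entirely combinatorial and uses only the parity of vertex degrees and the pair-traversal procedure. The only point that deserves a short verification is that the trail-decomposition is well-defined (each edge lies in exactly one trail), but this is immediate from the fact that the chosen pairings define a fixed-point-free involution on the set of edge-endpoints at each vertex.
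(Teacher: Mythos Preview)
Your argument is correct, and the underlying idea coincides with the paper's: force $e_v$ and $f_v$ to be traversed in opposite directions at $v$ by isolating them as a pair, then use the Eulerian structure to orient everything consistently. The execution differs, however. The paper detaches each $v\in V'$ into a degree-$2$ vertex $u_v$ carrying $\{e_v,f_v\}$ and a vertex $w_v$ carrying the remaining incident edges; the resulting graph is still Eulerian, so one simply invokes the existence of an Eulerian orientation and then re-identifies $u_v$ with $w_v$. Your version instead fixes a \emph{full} transition system at every vertex, decomposes $G$ into closed trails, and orients them --- effectively re-proving the existence of Eulerian orientations along the way. Both work; the paper's route is shorter because it treats ``Eulerian graphs admit Eulerian orientations'' as a black box and only performs the minimal detachment needed, while yours is self-contained and slightly more constructive.
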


\begin{proof}
Let $G'$ be the graph obtained from $G$ by detaching each vertex $v\in V'$ into two vertices $u_v$ and $w_v$ such that $u_v$ is incident to $ \{e_v,f_v\}$ and $w_v$ is incident to $\delta_G(\{v\})-\{e_v,f_v\}$  in $G'$. As $G$ is Eulerian, so is $G'.$ Hence there exists an Eulerian orientation $D'$ of $G'$. By identifying $u_v$ and $w_v$ in $D'$ for all $v\in V'$, we obtain  the required orientation.
\end{proof}

\bigskip

The following result is a direct consequence of the definition of strongly connected directed graphs.

\begin{prop}\label{efface}
Given a directed graph $D=(V,A)$, a set $F\subseteq A$ is deletable if and only if $\delta_D^-(X)$ contains either at least one arc of $A-F$ or at least two arcs for every nonempty, proper subset $X$ of $V$.    
\end{prop}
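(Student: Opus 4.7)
The proposition is a direct reformulation of the definitions, relying on the standard cut characterization of strong connectivity: a directed graph is strongly connected if and only if $|\delta^-(X)| \geq 1$ for every nonempty proper $X\subset V$. Applied to $D-f$, this says $D-f$ is strongly connected if and only if for every nonempty proper $X\subset V$, $\delta_D^-(X)$ contains at least one arc distinct from $f$. The plan is simply to translate this observation uniformly over all $f\in F$.

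For the backward direction, I would assume the stated cut condition and pick an arbitrary $f\in F$ together with an arbitrary nonempty proper $X\subset V$. By hypothesis either $\delta_D^-(X)$ contains an arc of $A-F$, which in particular is not $f$ since $f\in F$, or $|\delta_D^-(X)|\geq 2$, in which case $\delta_D^-(X)\setminus\{f\}$ is still nonempty. In both cases $\delta_{D-f}^-(X)\neq\emptyset$, so $D-f$ is strongly connected; as $f\in F$ was arbitrary, $F$ is deletable.

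For the forward direction I would argue by contrapositive. Suppose the condition fails at some nonempty proper $X\subset V$, so $\delta_D^-(X)\subseteq F$ and $|\delta_D^-(X)|\leq 1$. If $\delta_D^-(X)=\emptyset$, then $D$ itself is not strongly connected and (taking any $f\in F$) neither is $D-f$. If instead $\delta_D^-(X)=\{f\}$, then $f\in F$ and $\delta_{D-f}^-(X)=\emptyset$, so $D-f$ is not strongly connected. Either way $F$ is not deletable, which completes the argument. There is no real obstacle here; the only subtlety is carefully distinguishing the cases $|\delta_D^-(X)|=0$ and $|\delta_D^-(X)|=1$ and noting that in the second case the unique arc of $\delta_D^-(X)$ is automatically the arc of $F$ witnessing the failure of deletability.
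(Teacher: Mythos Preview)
Your argument is correct and is precisely the unpacking of definitions the paper intends; the paper itself offers no proof beyond remarking that the proposition is a direct consequence of the definition of strong connectivity. The only minor wrinkle is the degenerate case $F=\emptyset$ with $D$ not strongly connected, where the statement itself fails (the empty set is vacuously deletable while the cut condition does not hold) and your step ``taking any $f\in F$'' is empty---but this case never arises in the paper's applications.
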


Consequently, given a graph $G$, a subset $F \subseteq E(G)$ is deletable if and only if there is an orientation satisfying the above properties.
\medskip

Finally, we show one more result about strongly connected orientations of $3$-edge-connected graphs which we need in the proof of Theorem \ref{34ec}.

 \begin{lemma}\label{cyclearc}
 Let $D$ be a strongly connected orientation of a $3$-edge-connected graph $G$ and $C$ a circuit of $D$. 
 Then $C$ contains an arc $a$ such that $D-a$ is strongly connected.
 \end{lemma}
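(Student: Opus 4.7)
The plan is to argue by contradiction: assume every arc of $C$ is non-deletable. By Proposition~\ref{efface}, for each arc $a_j$ of $C$ I fix a minimal vertex set $X_j$ with $\delta_D^+(X_j)=\{a_j\}$; a standard argument shows that minimality forces $D[X_j]$ to be strongly connected. Since $C$ is a directed cycle, exactly one other arc of $C$ lies on $\delta(X_j)$; denote it $a_{g(j)}$ and note that it enters $X_j$. The hypothesis $|\delta_G(X_j)|\geq 3$ also forces at least one non-$C$ arc to enter $X_j$.

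The key computation is the submodular inequality applied to the pair $X_j,X_{g(j)}$. The entry arc $a_{g(j)}$ has tail $v_{g(j)-1}\in X_{g(j)}\setminus X_j$ and head $v_{g(j)}\in X_j\setminus X_{g(j)}$, so it contributes only to the right-hand side, yielding the strict inequality
\[
|\delta^+(X_j\cap X_{g(j)})| + |\delta^+(X_j\cup X_{g(j)})| < |\delta^+(X_j)| + |\delta^+(X_{g(j)})| = 2.
\]
This forces $X_j\cap X_{g(j)}=\emptyset$ or $X_j\cup X_{g(j)}=V$. The union case is ruled out by partition analysis: writing $A=X_j\setminus X_{g(j)}$, $B=X_j\cap X_{g(j)}$, $C'=X_{g(j)}\setminus X_j$, either $B=\emptyset$ yields $|\delta_G(X_j)|=2$ (contradicting 3-edge-connectivity) or $B\neq\emptyset$ yields $\delta^+(B)=\{a_j\}$ by an arc-count using $\delta^+(X_j)=\{a_j\}$ and $\delta^+(X_{g(j)})=\{a_{g(j)}\}$, making $B\subsetneq X_j$ (strict since $v_{g(j)}\in A$) a smaller witness for $a_j$ and contradicting the minimality of $X_j$. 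Hence $X_j\cap X_{g(j)}=\emptyset$ for every $j$.

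Now pick $j$ lying in a cycle of the functional graph of $g$. The orbit witnesses $X_j,X_{g(j)},X_{g^2(j)},\ldots$ are consecutively disjoint, and their union $U$ satisfies $\delta^+(U)=\emptyset$ since each out-arc $a_{g^s(j)}$ lands in $X_{g^{s-1}(j)}\subseteq U$; strong connectivity of $D$ then gives $U=V$. The main remaining obstacle is to upgrade consecutive disjointness to \emph{pairwise} disjointness: the chasing argument shows that any non-empty intersection $X_{g^s(j)}\cap X_{g^{s'}(j)}$ with non-consecutive $s,s'$ would force, via submodular equality and individual minimality, a containment such as $X_{g^s(j)}\subseteq X_{g^{s'}(j)}$, after which following the out-arc $a_{g^s(j)}$ iteratively produces a chain of such containments that eventually reaches an adjacent orbit pair, contradicting the adjacent disjointness already established. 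With pairwise disjointness in hand, $V=\bigsqcup_s X_{g^s(j)}$; the only arcs across this partition are the $C$-arcs of the orbit, so $|\delta^-(X_j)|=1$ and therefore $|\delta_G(X_j)|=2$, contradicting 3-edge-connectivity.
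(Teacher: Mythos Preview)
Your approach is genuinely different from the paper's. The paper contracts an auxiliary circuit in a minimal counterexample and lifts back; you instead uncross the minimal tight sets $X_j$ witnessing $\delta^+(X_j)=\{a_j\}$. The setup through adjacent disjointness $X_j\cap X_{g(j)}=\emptyset$, the conclusion $\bigcup_s X_{g^s(j)}=V$, and the final step (a partition into the orbit sets forces $d_G(X_j)=2$) are all correct.

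The gap is in upgrading consecutive to pairwise disjointness. You assert that a non-empty non-adjacent intersection $W=X_{g^s(j)}\cap X_{g^{s'}(j)}$ ``forces, via submodular equality and individual minimality, a containment''. But from $\delta^+(W)\subseteq\{a_{g^s(j)},a_{g^{s'}(j)}\}$ one gets a trichotomy: if $|\delta^+(W)|=1$ then minimality gives a containment, but if $|\delta^+(W)|=2$ then submodularity only gives $X_{g^s(j)}\cup X_{g^{s'}(j)}=V$, which you have ruled out for \emph{adjacent} pairs, not in general. The same trichotomy recurs at each step of your ``chain'', and you do not explain why the union branch cannot stall the descent.

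The gap is repairable, but it takes real work. One way: first note that any non-adjacent intersection yields \emph{some} containment $X_{j_a}\subsetneq X_{j_b}$ (since the union case gives $X_{j_{a\pm1}}\subseteq X_{j_b}$). Now pick such a containment with $d:=(a-b)\bmod r$ minimal; necessarily $d\in\{2,\dots,r-2\}$. From $X_{j_a}\subseteq X_{j_b}$ the out-arc $a_{j_a}$ stays inside $X_{j_b}$, so $X_{j_{a-1}}\cap X_{j_b}\neq\emptyset$. If $d=2$ this is an adjacent intersection, contradiction. If $d\ge 3$, apply the trichotomy to $(a-1,b)$: the reverse containment gives $X_{j_a}\subseteq X_{j_{a-1}}$ (adjacent, contradiction); the forward containment has parameter $d-1<d$ (contradicting minimality); and the union case $X_{j_{a-1}}\cup X_{j_b}=V$ forces $X_{j_{a-2}}\subseteq X_{j_b}$ with parameter $d-2$, which is either adjacent ($d=3$) or again smaller than $d$. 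Either way one contradicts the minimality of $d$, so no non-adjacent intersection exists. With this case analysis supplied, your proof goes through; as written, however, the key step is only sketched. By contrast, the paper's contraction argument is a short paragraph and sidesteps all of this uncrossing.
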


 \begin{proof} Let $(G, D, C)$ be a counterexample that minimizes the number of vertices of $D.$ Let $e$ be an edge of $G$ that is incident to a vertex of $C$ and that does not belong to $C.$ By the 3-edge-connectivity of $G$, $e$ exists. Since $D$ is strongly connected, $\vec e$ belongs to a directed path $P$ whose end-vertices belong to $C$ but whose internal vertices do not. Then $P$ can be extended by a possibly trivial directed subpath of $C$ to form a circuit $C^*$. Let $(G', D', C')$ be obtained from $(G, D,C)$ by contracting $C^*.$ Then, by Propositions \ref{dgcnnctd}(a) and \ref{strnglcnnctd}(a), the assumptions of the lemma are satisfied for $(G', D',C')$. By the minimality of $(G, D, C)$, $C'$ contains an arc $a'$  such that $D'-a'$ is strongly connected. Let $a$ be the arc of $C$ in $D$ that corresponds to $a'$. Since $D'-a'$ and $C^*$ are strongly connected, by Proposition \ref{strnglcnnctd}(b), so is $D-a$.   
 \end{proof}

 \subsection{Cubic extensions}\label{reduc}
 \medskip

We introduce for any graph $G=(V,E)$ of minimum degree at least 3 an auxiliary graph {\boldmath $H_G$} that is cubic. For each vertex $v \in V$ of degree at least 4, $H_G$ contains a set $S_v$ of $d_G(\{v\})$ vertices. For each vertex of degree 3, let $S_v=\{v\}$. Next,  for each $v \in V$ of degree at least 4, we add a cycle $C_v$ whose vertex set is $S_v$. Finally, for each edge $uv \in E$, we add an edge between $S_u$ and $S_v$ to $H_G$. We do this in a way so that $H_G$ becomes cubic. We call $H_G$ a {\it cubic extension} of $G.$ Note that $H_G$ is not unique. This ambiguity has no consequences though.

\begin{prop}\label{red34} Let $G=(V,E)$ be a graph of minimum degree at least $3$ and $H_G$ be a cubic extension of $G$.
	\begin{itemize}
		\item [(a)] If $G$ is 3-edge-connected and $G-v$ is connected for all $v\in V$, then  $H_G$ is 3-edge-connected.
		\item [(b)] If $G$ is essentially 4-edge-connected and $G-v$ is  2-edge-connected for all $v\in V$, then $H_G$ is essentially 4-edge-connected.
	\end{itemize}
\end{prop}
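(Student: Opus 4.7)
The plan for both parts is the same setup: fix a nonempty proper subset $X \subseteq V(H_G)$ and classify each $v \in V$ by how $X$ meets $S_v$. Let $Y_1 = \{v \in V : S_v \subseteq X\}$, $Y_2 = \{v \in V : S_v \cap X = \emptyset\}$, and call $V_s = V \setminus (Y_1 \cup Y_2)$ the set of \emph{split} vertices. Two observations drive the analysis. First, for each $v \in V_s$ the cycle $C_v$ is cut into alternating arcs by $X$, so it contributes an even number of edges, at least $2$, to $\delta_{H_G}(X)$. Second, if $V_s = \emptyset$, then $\delta_{H_G}(X)$ consists exactly of the original edges of $G$ between $Y_1$ and $Y_2$, so $|\delta_{H_G}(X)| = d_G(Y_1)$.

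For part~(a), I would argue by contradiction and assume $|\delta_{H_G}(X)| \leq 2$. Then $|V_s| \leq 1$, since two split vertices already give at least $4$ cut edges. If $V_s = \emptyset$, the $3$-edge-connectivity of $G$ forces $d_G(Y_1) \geq 3$, a contradiction. If $V_s = \{v\}$, the cut must consist of exactly two edges of $C_v$ and no original edges, so $X \cap S_v$ is a single arc $A$ of $C_v$ with complementary arc $B = S_v \setminus X$. Tracing the original edges incident to $A$ and $B$ and using that none can lie in the cut, I deduce that all original edges from $A$ reach $Y_1$, all from $B$ reach $Y_2$, and no edge of $G$ joins $Y_1$ to $Y_2$. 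Since $A$ and $B$ are nonempty, both $Y_1$ and $Y_2$ are nonempty, hence $G - v$ is disconnected, contradicting the hypothesis.

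For part~(b), part~(a) already gives $3$-edge-connectivity of $H_G$, so it suffices to show that any $3$-edge-cut $\delta_{H_G}(X)$ has a trivial side. If $V_s = \emptyset$, then $d_G(Y_1) = 3$; essential $4$-edge-connectivity of $G$ forces $Y_1$ or $V \setminus Y_1$ to be a single vertex $u$, and since $d_G(u) = 3$ gives $|S_u| = 1$, the corresponding side of $X$ in $H_G$ is trivial. If $V_s = \{v\}$, parity forces exactly two cut edges in $C_v$ and exactly one original cut edge. The $2$-edge-connectivity of $G - v$ rules out $Y_1$ being a nonempty proper subset of $V \setminus \{v\}$, since this would already contribute at least two original cut edges between $Y_1$ and $Y_2$. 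Hence $Y_1 = \emptyset$ or $Y_2 = \emptyset$; by symmetry I may assume $Y_1 = \emptyset$, so $X \subseteq S_v$ and $X = A$. Every vertex of $A$ carries exactly one original edge, all going to $Y_2$, and the unique admissible original cut edge then forces $|A| = 1$, so $|X| = 1$.

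The main technical care is in the $|V_s| = 1$ case: one has to match each original edge incident to $A$ or $B$ with the side of the partition $V = Y_1 \sqcup Y_2 \sqcup \{v\}$ its other endpoint lies in, so that a small cut in $H_G$ translates into the right structural statement about $G$ (disconnectedness of $G - v$ for (a); a trivial side for (b)). Beyond this bookkeeping, everything reduces to parity of cycle cuts plus the assumed connectivity of $G$ and $G - v$.
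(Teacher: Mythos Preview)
Your proof is correct and follows essentially the same approach as the paper: both classify vertices by whether $S_v$ is split by $X$, use that each split $C_v$ contributes an even number $\ge 2$ of cut edges to bound $|V_s|\le 1$, and then in the single-split case translate the remaining cut edges into a statement about $G-v$. The only cosmetic differences are that the paper argues the $Y_1=\emptyset$ (resp.\ $Y_2=\emptyset$) subcase in part~(a) by deriving an extra original edge out of $X\subsetneq S_v$, whereas you preempt this by observing that the original edges leaving $A$ and $B$ force $Y_1,Y_2\neq\emptyset$; and in part~(b) you should state explicitly (as the paper does) that $|V_s|\ge 2$ is impossible since two split cycles already contribute at least four cut edges.
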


\begin{proof}
(a) Assume for a contradiction that $d_{H_G}(X)\leq 2$ for some nonempty, proper subset $X$ of $V(H_G).$ Since $G$ is 3-edge-connected, there is at least one $v \in V$ such that $S_v\cap X$ and $S_v-X$ are nonempty. It follows that $2\leq d_{C_v}(X)\leq d_{H_G}(X)\leq 2$. This yields that for every $u \in V-v$ we have $S_u \subseteq X$ or $S_u \subseteq V(H_G)-X$ and for all $uw \in E$ with $v \notin \{u,w\}$ we have $S_u \cup S_w \subseteq X$ or $S_u \cup S_w \subseteq V(H_G)-X$. If there are vertices $u,w \in V$ such that $S_u \subseteq X$ and $S_w \subseteq V(H_G)-X$, it follows that $G-v$ is not connected, contradicting the assumption. Therefore, by symmetry we may assume  that $X$ is a nonempty, proper subset of $S_v$. We then have $d_{C_v}(X) \geq 2$ and there is at least one additional edge between $X$ and $V(H_G)-S_v$, a contradiction to  $d_{H_G}(X)\leq 2$.

(b) By (a), $H_G$ is 3-edge-connected. For the sake of a contradiction, suppose that there is some non-trivial, proper subset $X$ of $V(H_G)$ such that $d_{H_G}(X)=3$. If there are two vertices $u,v \in V$ such that $S_u\cap X$, $S_u-X$, $S_v\cap X$ and $S_v-X$ are nonempty, we have $2+2\leq d_{C_u}(X)+d_{C_v}(X)\leq d_{H_G}(X)\leq 3$, a contradiction.

 Now consider the case that there is exactly one $v \in V$ such that $S_v\cap X$ and $S_v-X$ are nonempty. We have that $d_{H_G}(X)-d_{C_v}(X)\leq 1$. It follows that in $H_G$ there is at most one edge between $X-S_v$ and $V(H_G)-X-S_v$. If  $X-S_v$ and $V(H_G)-X-S_v$ are nonempty, then $G-v$ is not $2$-edge-connected, a contradiction to the assumption. By symmetry, we may therefore assume that $X\subset S_v$. We have that $d_{C_v}(X)=2$ and there are $|X|$ edges between $X$ and $V(H_G)-S_v$. It follows that $|X|=1$, which is a contradiction.

Finally assume that $S_v\subseteq X$ or $S_v\cap X=\emptyset$ for all $v \in V$. Let $X'=\{v \in V:S_v \subseteq X\}$. As $G$ is essentially $4$-edge-connected, we may assume by symmetry that $X'=\{v\}$ for some vertex $v$ of degree $3$. This yields that $|X|=|S_v|=1$, which is a contradiction.
\end{proof}

 \section{3-edge-connected graphs}\label{sec3ec}
This section is dedicated to proving Theorems \ref{7}, \ref{bftheo} and \ref{3col}. In the first part, we show that a certain class of edge sets is deletable. After, we show how to cover cubic $3$-edge-connected graphs with such sets. Next, we use this to conclude cubic versions of Theorems \ref{7} and \ref{bftheo} and to prove Theorem \ref{3col}. Finally, we extend this to obtain the general versions of Theorems \ref{7} and \ref{bftheo}.

\subsection{A class of deletable edge sets}
\medskip

Given a packing $\mathcal{C}$ of cycles in a 3-edge-connected graph $G$, the {\it special set} of $\mathcal{C}$ is defined to be the set of edges in $E(G)- E(\mathcal{C})$ that belong to no 3-edge-cut of $G/\mathcal{C}$.

\begin{lemma}\label{cubicmatching}
Let $M$ be the special set of  a cycle packing ${\cal C}$ of a $3$-edge-connected graph $G$. Then $M$ is deletable. 
\end{lemma}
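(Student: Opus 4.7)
The approach is to orient each cycle of $\mathcal{C}$ as a circuit and to extend this by a well-balanced orientation of the contraction $G' := G/\mathcal{C}$. By Proposition \ref{dgcnnctd}(a), $G'$ is 3-edge-connected. Theorem \ref{fort} supplies an admissible odd-vertex pairing $P$ of $G'$; taking an Eulerian orientation of $G'+P$ and discarding the arcs on $P$ yields a well-balanced orientation $D'$ of $G'$. The key property I will use is the cut-balance bound $|\delta^-_{D'}(X)| \geq \lfloor d_{G'}(X)/2 \rfloor$ for every nonempty proper $X \subseteq V(G')$, which is a standard consequence of this admissible-pairing construction. Combining $D'$ with the circuit orientations of the cycles in $\mathcal{C}$ gives an orientation $D$ of $G$ with $D/\mathcal{C} = D'$, and iterating Proposition \ref{strnglcnnctd}(b) once per cycle shows that $D$ is strongly connected.

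To establish that $\vec{M}$ is deletable in $D$, I apply Proposition \ref{efface}: for every nonempty proper $X \subseteq V(G)$, $\delta^-_D(X)$ should contain either an arc outside $\vec{M}$ or at least two arcs. I split into cases depending on whether $X$ splits the vertex set of some cycle $C \in \mathcal{C}$. In the affirmative case, since $\vec{C}$ is strongly connected and has vertices on both sides of $X$, some arc of $\vec{C}$ enters $X$; because $E(\mathcal{C}) \cap M = \emptyset$, this arc lies outside $\vec{M}$ and the condition is met. Otherwise $X$ projects to a set $X' \subseteq V(G')$ with $\delta_G(X) = \delta_{G'}(X')$ and $\delta^-_D(X) = \delta^-_{D'}(X')$. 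If $d_{G'}(X') = 3$, then every edge of $\delta_{G'}(X')$ belongs to a 3-edge-cut of $G'$ and is therefore outside $M$, so any entering arc supplied by the strong connectivity of $D'$ does the job. If $d_{G'}(X') \geq 4$, the cut-balance bound gives $|\delta^-_{D'}(X')| \geq 2$.

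The main obstacle lies in the cut-balance property of $D'$: this is slightly stronger than the path-count well-balancedness that is the explicit conclusion of Theorem \ref{fort}, but it is a classical byproduct of the admissible-pairing construction, obtained from the identity $|\delta^+_{D'}(X)| = \tfrac{1}{2}(d_{G'}(X) + d_P(X)) - |\delta^+_P(X)|$ for the Eulerian orientation of $G'+P$, combined with a suitable choice of that Eulerian orientation. Once cut-balance is granted, the remainder of the proof is a clean case analysis glued together by Propositions \ref{efface} and \ref{strnglcnnctd}, with no further technical subtleties.
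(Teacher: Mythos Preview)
Your overall strategy---orient the cycles of $\mathcal{C}$ as circuits and take a well-balanced orientation $D'$ of $G'=G/\mathcal{C}$---is exactly what the paper does. The gap is in your verification step. The cut-balance inequality $|\delta^-_{D'}(X)|\ge \lfloor d_{G'}(X)/2\rfloor$ for \emph{every} $X$ is not a consequence of the admissible-pairing construction: the identity you wrote is correct, but nothing forces $|\delta^+_{\vec P}(X)|\le d_P(X)/2$ simultaneously for all $X$, and ``a suitable choice of Eulerian orientation'' does not help, since admissibility only promises that \emph{every} Eulerian orientation restricts to a well-balanced one (in the path sense), not that some choice is cut-balanced. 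Concretely, in a cubic $3$-edge-connected $G'$ one can have $d_{G'}(X')=4$ while $\lambda_{G'}(u,v)=3$ for all $u\in X'$, $v\notin X'$, and then a well-balanced $D'$ may well send three arcs one way and one the other.

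The paper sidesteps this by arguing per edge rather than per cut: for $f=uv\in M$ it notes that $f$ lies in no $3$-edge-cut of $G'$, so by Menger $\lambda_{G'}(u,v)\ge 4$, and then the \emph{path} definition of well-balancedness gives two arc-disjoint $u\to v$ paths in $D'$, hence $|\delta^+_{D'}(X)|\ge 2$ for every $X$ separating $u$ from $v$; thus $D'-\vec f$ is strongly connected. Your argument is easily repaired along the same lines: replace the split ``$d_{G'}(X')=3$ versus $\ge 4$'' by ``$\delta_{G'}(X')\cap M=\emptyset$ versus $\neq\emptyset$''. In the first case any entering arc (guaranteed by strong connectivity) lies outside $\vec M$; in the second, pick $f=uv\in M\cap\delta_{G'}(X')$, get $\lambda_{G'}(u,v)\ge 4$, and well-balancedness yields $|\delta^-_{D'}(X')|\ge 2$.
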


\begin{proof}
Let $G'=G/\mathcal{C}$. Since $G$ is 3-edge-connected, so is $G'$ by Proposition \ref{dgcnnctd}(a).
Consider a well-balanced orientation $D'$ of $G'$ which exists by Theorem \ref{fort}. Then $D'$ is strongly connected. Let $D$ be the  orientation of $G$ obtained from $D'$ by orienting all cycles of $\mathcal{C}$ as a circuit. 

We have to show that $D-\vec f$ is strongly connected for all $f \in M.$  By Proposition \ref{strnglcnnctd}(b), it is enough to show that $D'-\vec f$ is strongly connected for all $f\in M.$
 Let $\vec f=uv$ for some $f \in M$ and suppose that there exists some non-empty, proper subset $X$ of  $V(D')$ with $|\delta^+_{D'-\vec f}(X)|=0$. Obviously $u \in X$ and $v \in V(D')-X$. Since $G'$ is 3-edge-connected and $f$ belongs to no 3-edge-cut in $G'$, Theorem \ref{mentheo} guarantees that $\lambda_{G'}(u,v) \geq 4$. As $D'$ is well-balanced, it follows that $0=|\delta^+_{D'-\vec f}(X)|=|\delta^+_{D'}(X)|-1\geq \lfloor\frac{\lambda_{G'}(u,v)}{2}\rfloor-1\geq 2-1=1$, a contradiction.
\end{proof}

\subsection{Covering cubic graphs with special sets}
\medskip


In the following we show that any cubic 3-edge-connected graph can be covered by 7 special sets. 
For technical reasons, we will need the following slight strengthening.

\begin{lemma}\label{strong7packing}
For every cubic 3-edge-connected graph, there exist 7 cycle packings satisfying the following conditions:
\begin{itemize}
    \item [(a)] Every edge is in the special set of at least one cycle packing.
    \item [(b)] Every edge is in exactly 4 of the cycle packings.
\end{itemize}
\end{lemma}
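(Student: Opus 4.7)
My plan is to construct the seven cycle packings $\mathcal{C}_1,\dots,\mathcal{C}_7$ by anchoring on a 2-factor of $G$ and then making carefully chosen local modifications. Since $G$ is cubic and (in particular) 2-edge-connected, Petersen's theorem (Theorem \ref{pettheo}) yields a perfect matching $M$, so $F:=E(G)\setminus M$ is a 2-factor, i.e., a vertex-disjoint union of cycles covering $V(G)$. This 2-factor will serve as the combinatorial backbone of the construction.

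A simple double-counting argument shows that condition~(b) forces a very rigid structure. At each vertex $v$ of degree $3$, the $3\cdot 4=12$ edge-appearances at $v$ across the seven packings must pair up (every packing in which $v$ lies on a cycle uses exactly two of $v$'s incident edges), so $v$ lies on a cycle in exactly $6$ of the $7$ packings. Hence every vertex is \emph{omitted} by precisely one packing, yielding a partition $V(G)=V_1\cup\dots\cup V_7$ where $V_i$ is the set of vertices omitted by $\mathcal{C}_i$. The construction therefore reduces to choosing this partition and, for each $i$, a 2-factor $\mathcal{C}_i$ on the induced subgraph $G[V(G)\setminus V_i]$. Since this subgraph need not be cubic, the existence of these 2-factors requires additional tools, for example the $T$-join characterization (Proposition \ref{tjn}), further applications of Petersen's theorem to auxiliary graphs, and local swaps along $F$-$M$-alternating cycles.

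To verify condition~(a), fix an arbitrary edge $e=uv$. Exactly $3$ of the $7$ packings have $e$ as a non-cycle edge, namely the packings omitting $u$ or $v$ (possibly coinciding) together with at most one further packing where $e$ is skipped even though both endpoints are covered. I need to ensure that for at least one such packing $\mathcal{C}_i$, the edge $e$ belongs to no 3-edge-cut of $G/\mathcal{C}_i$. The central tool is Proposition \ref{crossfree}: the 3-edge-cuts of $G$ form a laminar (non-crossing) family, so their images in $G/\mathcal{C}_i$ have a well-understood tree-like structure. By selecting the partition and modifications to exploit this laminar structure, one can show that $e$ escapes all surviving 3-edge-cuts in at least one of the three relevant packings.

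The main obstacle lies in reconciling conditions~(a) and~(b) simultaneously. Condition~(b) pins down the vertex-omission pattern through the partition $V_1,\dots,V_7$, while condition~(a) demands a global structural guarantee about how $e$ interacts with all 3-edge-cuts inherited by the various contractions. Making the partition and the corresponding 2-factors compatible with the laminar 3-edge-cut family, and doing so uniformly over all edges of $G$, is the most delicate step; I expect it to require a careful case analysis driven by the tree of 3-edge-cuts, with the freedom to perform local $F$-$M$-swaps used to repair any edge for which no relevant packing initially puts it in the special set.
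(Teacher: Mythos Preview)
Your proposal correctly derives from (b) that each vertex is missed by exactly one packing, but it is not yet a proof: you never construct the partition $V_1,\dots,V_7$ or the accompanying 2-factors, and the verification of (a) is deferred entirely to an unspecified ``careful case analysis driven by the tree of 3-edge-cuts.'' For a generic partition the subgraphs $G[V\setminus V_i]$ need not admit 2-factors at all, so the partition and the 2-factors must be produced together, and you give no mechanism for doing this. (Incidentally, your count of packings in which $e\notin E(\mathcal{C}_i)$ but both endpoints are covered is off when $u$ and $v$ lie in the same $V_i$.)

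The paper supplies the two ideas you are missing. First, a minimal-counterexample argument (gluing the seven packings across a nontrivial 3-cut, with Proposition~\ref{crossfree} used to check that special sets survive the gluing) reduces to the case where $G$ is \emph{essentially} 4-edge-connected; then the only 3-cuts are the trivial ones $\delta(\{v\})$, so membership of $e$ in the special set of $\mathcal{C}_i$ becomes the purely local condition that both endpoints of $e$ lie on cycles of $\mathcal{C}_i$. Second, in that case one takes $\mathcal{C}_1=G-M$ (so every $e\in M$ is automatically special for $\mathcal{C}_1$), contracts $\mathcal{C}_1$ to a 4-edge-connected graph $G'$, partitions $E(G')$ into three $T$-joins via two edge-disjoint spanning trees (Theorem~\ref{rbrcvrnt} and Proposition~\ref{tjn}), and from each $T$-join builds two $V$-joins of $G$ whose complements yield $\mathcal{C}_2,\dots,\mathcal{C}_7$. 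This $T$-join decomposition is the engine that makes (a) and (b) hold simultaneously; nothing in your outline plays that role.
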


\begin{proof}
For the sake of a contradiction, let $G=(V,E)$ be a counterexample to the lemma that minimizes $|V|$. 
\begin{claim}\label{oubsfjd}
$G$ is essentially 4-edge-connected.
\end{claim}
\begin{proof}
For the sake of a contradiction, let $\{A_1,A_2\}$ be a partition of $V(G)$ such that  $|A_i|\geq 2$ and   a 3-edge-cut  $F:=\{e_1,e_2,e_3\}$ exists between $A_1$ and $A_2$. Construct the graphs $G_i$ from $G$ by contracting $A_{3-i}$  to $v_i$. As $G_i$ is cubic, 3-edge-connected by Proposition \ref{dgcnnctd}(a) and smaller than $G$, there exists a set of cycle packings $\mathbb{C}^i=\{\mathcal{C}_{1}^i,\ldots,\mathcal{C}_{7}^i\}$ of $G_i$ satisfying $(a)$ and $(b)$. 

Observe that since $G_i$ is cubic, $(b)$ implies that for $j\in \{1,2,3\}$, there are exactly two cycle packings in $\mathbb{C}^i$ that contain $\{e_1,e_2,e_3\}-\{e_j\}$. It follows that $v_i$ is in exactly 6  cycle packings of $\mathbb{C}^i.$ By relabeling if needed, we may assume that $\mathcal{C}_{1}^i$ is the cycle packing that does not contain $v_i$ and  $\{e_1,e_2,e_3\}-\{e_j\}$ is contained in $\mathcal{C}_{2j}^i$ and $\mathcal{C}_{2j+1}^i$. We may also assume, by (a), that $e_j$ is in the special set of $\mathcal{C}_{2j}^i$. 


We  construct $\mathbb{C}=\{\mathcal{C}_1,\ldots,\mathcal{C}_7\}$ so that $E(\mathcal{C}_k)=E(\mathcal{C}_{k}^1)\cup E(\mathcal{C}_{k}^2)$ for $k=1,\ldots,7$. Observe that this is a set of seven cycle packings. We  finish the proof by showing that $\mathbb{C}$ satisfies $(a)$ and $(b)$.

First observe that $(b)$ follows directly from the construction and the fact that an edge is in $\mathcal{C}_k$ if and only if it is in $\mathcal{C}_k^1$ or $\mathcal{C}_k^2$. For $(a)$, let first $e$ be an edge in $G[A_i]$. By $(a)$, there exists a $k \in \{1,\ldots,7\}$ such that $e$ is in the special set of $\mathcal{C}_{k}^i$. First observe that $e$ is  in $E(G)- E(\mathcal{C}_k)$. If $e$ is in a 3-edge-cut $F'$ of $G/\mathcal{C}_k$, since $F'$ is not a 3-edge-cut of $G_i/\mathcal{C}_{k}^i$, $F'$ contains an edge of $G[A_{3-i}]$. This yields that $F$ and $F'$ are crossing 3-edge-cuts of $G$, a contradiction to Proposition \ref{crossfree}.

Now consider the edge $e_j$ for some $j \in \{1,2,3\}.$ As $e_j\in E(G_i)- E(\mathcal{C}_{2j}^i)$, we have $e_j\in E(G)- E(\mathcal{C}_{2j}).$
Again, assume that $e_j$ is in a 3-edge-cut $F'$ of $G/\mathcal{C}_{2j}$. As $F'$ is not a $3$-edge-cut in $G_1/\mathcal{C}_{2j}^1$ and $G_2/\mathcal{C}_{2j}^2$, we obtain that $F'$ and $F$ are crossing in $G$ contradicting Proposition \ref{crossfree}. This finishes the proof of the claim. 
\end{proof} 
\bigskip

By Theorem \ref{pettheo}, $G$ contains a perfect matching {\boldmath $M$}. Since $G$ is cubic, the connected components of $G-M$ form a cycle packing  {\boldmath $\mathcal{C}_1$}. Now consider the graph {\boldmath $G'$} $:=G/\mathcal{C}_1$ (including arising loops) and let {\boldmath $T$} be its set of odd-degree vertices.

\begin{claim}\label{3tjoins}
The edge set of $G'$ can be partitioned into three $T$-joins {\boldmath $F_1$}, {\boldmath $F_2$} and {\boldmath $F_3$}.
\end{claim}

\begin{proof}
As $G$ is essentially 4-edge-connected by Claim \ref{oubsfjd}, $G'$ is essentially 4-edge-connected by Proposition \ref{dgcnnctd}(b). Every vertex $v$ of $G'$ corresponds to a cycle $C \in  \mathcal{C}_1$. It follows that $d_{G'}(v)\geq d_G(C)\geq 4$, so $G'$ is $4$-edge-connected. By Theorem \ref{rbrcvrnt}, there exist two edge-disjoint spanning trees $F'_1,F'_2$ of $G'$. By Proposition \ref{tjn}, each of them  contains a $T$-join $F_i,i=1,2$. As $F_1 \cup F_2$ is Eulerian, $F_3=E(G')-F_1-F_2$ is also a $T$-join.
\end{proof}

\begin{claim}\label{2Vjoins}
For $i=1,2,3$, there exist $V$-joins $S_{2i}$ and $S_{2i+1}$ of $G$ such that $S_{2i}\cap S_{2i+1}=F_i$ and $S_{2i}\cup S_{2i+1}=(E-M)\cup F_i.$
\end{claim}

\begin{proof} For $i=1,2,3$, let {\boldmath $T_{i}$} be the set of vertices in $V$ not incident to an edge in $F_i$. Let $C \in\mathcal{C}_1$ and let $v_C$ be the associated vertex in $V(G')$. Observe that, as $G$ is cubic and $F_i\subseteq M$ is a matching in $G$, we obtain $|V(C)|\equiv d_G(V(C)) = d_{G'}(v_c)$ and $|V(C)\cap V(F_i)|\equiv d_{F_i}(V(C)) = d_{F_i}(v_c)$. As $F_i$ is a $T$-join in $G'$, this yields $|T_i\cap V(C)|=|V(C)|-|V(C)\cap V(F_i)|\equiv d_{G'}(v_C)-d_{F_i}(v_C)\equiv 0$, so $|T_i\cap V(C)|$ is even. 
 Hence, by Proposition \ref{tjn}, we obtain that $G-M$ contains a $T_i$-join {\boldmath $N_i$}. Let {\boldmath $S_{2i}$} $:=F_i\cup N_i$ and {\boldmath $S_{2i+1}$} $:=F_i\cup(E-M- N_i )$. By construction, we have that $S_{2i}$ and $S_{2i+1}$ are $V$-joins in $G$ such that $S_{2i}\cap S_{2i+1}=F_i$ and $S_{2i}\cup S_{2i+1}=(E-M)\cup F_i.$
\end{proof}
\bigskip

For $j=2,\dots,7$, we define {\boldmath $\mathcal{C}_{j}$}  to be the set of nontrivial connected components of $G-S_{j}$. Observe that all of them are cycles as $S_j$ is a $V$-join and $G$ is cubic. 

\begin{claim}\label{igzberqkljds}
 $\mathcal{C}_1,\ldots,\mathcal{C}_7$ satisfy $(a)$ and $(b)$. 
\end{claim}

\begin{proof}
(a) For $e\in M$, since $G$ is essentially 4-edge-connected, $e$ is in the special set of $\mathcal{C}_1$. 

For $e\in E-M$, let $f$ and $g$ be the two edges of $M$ adjacent to $e$. Since $F_1,F_2$ and $F_3$ are disjoint, there is an $F_i$ that contains neither $f$ nor $g$. Then, since $G$ is cubic and by Claim \ref{2Vjoins}, one of the $V$-joins $S_{2i}$ and $S_{2i+1}$, say $S_j$, contains $e$ but none of the edges  adjacent to $e$. It follows that both endvertices of $e$ in $G$ are in cycles of $\mathcal{C}_{j}$. As $G$ is essentially 4-edge-connected, it follows that both endvertices of $e$ in $G/\mathcal{C}_{j}$ are of degree at least $4$.  As $G$ is essentially 4-connected, so is $G/\mathcal{C}_{j}$ by Proposition \ref{dgcnnctd}(b). This yields that $e$ is in no 3-edge-cut of $G/\mathcal{C}_{j}$ and so $e$ is in the special set of $\mathcal{C}_{j}$. 
\medskip

(b) For $e\in M$, by Claim \ref{3tjoins}, $e$ is in exactly one $F_i$, say $F_1.$ Then, by Claim \ref{2Vjoins}, $e$ is in $\mathcal{C}_4,\dots,\mathcal{C}_7$ and not in  $\mathcal{C}_1,\mathcal{C}_2,\mathcal{C}_3$.

For $e\in E-M$, $e$ is in $\mathcal{C}_1$  and, by Claim \ref{2Vjoins},  in exactly one of $\mathcal{C}_{2i}$ and $\mathcal{C}_{2i+1}$ for $i=1,2,3$. 
\end{proof}
\bigskip

Claim \ref{igzberqkljds} finishes the proof of Lemma \ref{strong7packing}.
\end{proof}

\subsection{Cubic case}
\medskip

We first show how to conclude a cubic version of Theorem \ref{7}.
\begin{theo}
Let $G$ be a cubic $3$-edge-connected graph. Then $f(G)\leq 7$.\label{7cub}
\end{theo}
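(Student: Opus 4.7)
The proof will be essentially a direct combination of the two lemmas already established in this section, so it should be short.

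The plan is to apply Lemma \ref{strong7packing} to obtain cycle packings $\mathcal{C}_1,\ldots,\mathcal{C}_7$ of $G$ such that every edge belongs to the special set of at least one $\mathcal{C}_i$ (only property (a) is needed here; property (b) is the ``technical'' strengthening that will pay off elsewhere). Let $M_i$ denote the special set of $\mathcal{C}_i$ for $i=1,\ldots,7$.

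For each $i$, I would then invoke Lemma \ref{cubicmatching}: since $M_i$ is the special set of a cycle packing of the $3$-edge-connected graph $G$, it is deletable. Unfolding the definition of deletable, this yields an orientation $D_i$ of $G$ such that $D_i-\vec{f}$ is strongly connected for every $f \in M_i$.

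Finally, I would argue that the collection $D_1,\ldots,D_7$ witnesses $f(G)\leq 7$: any edge $e\in E(G)$ lies in $M_i$ for some $i$ by property (a) of Lemma \ref{strong7packing}, and so $\vec{e}$ is a deletable arc in $D_i$. There is no real obstacle here; the only thing to double-check is that the seven orientations supplied by Lemma \ref{cubicmatching} (applied to the seven different packings independently) are legitimate orientations of $G$, which they are by construction, and that the notion of deletability used in Lemma \ref{cubicmatching} matches the notion needed to certify the Frank number, which it does directly from the definitions given after Theorem \ref{pet3}.
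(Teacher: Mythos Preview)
Your proposal is correct and follows essentially the same approach as the paper: apply Lemma~\ref{strong7packing} to cover $E(G)$ by seven special sets, then apply Lemma~\ref{cubicmatching} to each to obtain seven orientations witnessing $f(G)\le 7$. Your remark that only property~(a) of Lemma~\ref{strong7packing} is needed here is also accurate.
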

\begin{proof}
 Lemma \ref{strong7packing} yields that $E(G)$ can be covered by $7$ special sets $S_1,\ldots,S_7$. By Lemma \ref{cubicmatching}, there exist orientations $D_1,\ldots,D_7$ of $G$ such that $S_i$ is deletable in $D_i$ for $i=1,\ldots,7$. It follows that the Frank number of $G$ is at most $7$.
\end{proof}
\medskip

Next, we use Lemma \ref{cubicmatching} to show that perfect matchings with a certain additional property are deletable. As corollaries, we obtain Theorem \ref{3col} and a cubic version of Theorem \ref{bftheo}.
\begin{lemma}\label{perfect}
Let $M$ be a perfect matching of a cubic $3$-edge-connected graph $G$ intersecting every $3$-edge-cut of $G$ in exactly one edge. Then $M$ is deletable.
\end{lemma}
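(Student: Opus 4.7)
The plan is to apply Lemma \ref{cubicmatching} to the cycle packing $\mathcal{C}$ formed by the connected components of $G-M$. Since $G$ is cubic and $M$ is a perfect matching, $G-M$ is $2$-regular, so its connected components are indeed cycles; thus $\mathcal{C}$ is a spanning cycle packing with $E(\mathcal{C}) = E(G)-M$, and in particular $E(G)-E(\mathcal{C}) = M$.

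The crux of the argument is to show that the special set of $\mathcal{C}$ is equal to $M$, which by definition amounts to proving that no edge of $M$ lies in a $3$-edge-cut of $G/\mathcal{C}$. Suppose for contradiction that $\delta_{G/\mathcal{C}}(Y)$ is a $3$-edge-cut of $G/\mathcal{C}$ for some nonempty proper $Y\subseteq V(G/\mathcal{C})$, and let $X\subseteq V(G)$ be the union of the vertex sets of the cycles of $\mathcal{C}$ contracted to vertices of $Y$. Under the natural bijection between $E(G/\mathcal{C})$ and $E(G)-E(\mathcal{C}) = M$, the cut $\delta_{G/\mathcal{C}}(Y)$ corresponds to $\delta_G(X)$; hence $\delta_G(X)$ is a $3$-edge-cut of $G$ all three of whose edges lie in $M$. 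This contradicts the hypothesis that every $3$-edge-cut of $G$ meets $M$ in exactly one edge.

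Once the special set of $\mathcal{C}$ has been identified with $M$, Lemma \ref{cubicmatching} directly supplies an orientation of $G$ in which the arcs corresponding to $M$ are deletable, so $M$ is deletable. I do not foresee any substantial obstacle: the only point that requires a moment of care is the observation that every edge of $\delta_G(X)$ above lies in $M$, which follows because such an edge necessarily joins two distinct cycles of $\mathcal{C}$, while the intra-cycle edges of $\mathcal{C}$ are precisely the edges of $E(G)-M$.
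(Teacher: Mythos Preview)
Your proof is correct and follows essentially the same approach as the paper: form the cycle packing $\mathcal{C}$ from $G-M$, argue that a $3$-edge-cut of $G/\mathcal{C}$ would lift to a $3$-edge-cut of $G$ consisting entirely of edges of $M$ (contradicting the hypothesis), conclude that $M$ is the special set of $\mathcal{C}$, and invoke Lemma~\ref{cubicmatching}. The paper phrases the key step as ``$G/\mathcal{C}$ is $4$-edge-connected,'' but this is exactly equivalent to your formulation that no edge of $M$ lies in a $3$-edge-cut of $G/\mathcal{C}$.
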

\begin{proof}
As $G$ is cubic and $M$ is a perfect matching of $G$, the connected components of $G-M$ form a packing ${\cal C}$ of cycles. 
We show that $G/{\cal C}$ is 4-edge-connected.
By Proposition \ref{strnglcnnctd}(a) and since $G$ is 3-edge-connected, so is $G/{\cal C}$. A 3-edge-cut of $G/{\cal C}$ would provide a 3-edge-cut of $G$ intersecting $M$ in 3 edges contradicting the assumption. 
It follows that $M$ is the special set of ${\cal C}$ and therefore deletable by Lemma \ref{cubicmatching}.
\end{proof}
 \medskip

We first show how to conclude Theorem \ref{3col} from Lemma \ref{perfect}.
\medskip

\begin{proof} (of Theorem \ref{3col})
Let $G$ be a $3$-edge-colorable $3$-edge-connected graph. Then $G$ is cubic and has $3$ disjoint perfect matchings $M_1,M_2,M_3$ covering the edge set of $G$.
Let $\delta(X)$ be a 3-edge-cut of $G.$ Since $G$ is cubic and $d(X)=3$, we obtain that $|X|$ is odd. Then, since  $M_i$ is a perfect matching, we obtain that $\delta(X)$ intersects each $M_i$. As $d(X)=3$ and the matchings are disjoint, we obtain that $\delta(X)$ intersects each of $M_1, M_2, M_3$ exactly once. It follows by Lemma \ref{perfect} that each of  $M_1, M_2, M_3$ is deletable, so $f(G)\leq 3$.
\end{proof}
\medskip

Next, we prove in a similar way the following cubic version of Theorem \ref{bftheo}. 
\begin{theo}\label{fn5}
Let $G$ be a cubic 3-edge-connected graph that satisfies Conjecture \ref{bf}. Then $f(G) \leq 5$.
\end{theo}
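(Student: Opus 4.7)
\medskip

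The plan is to start from the six perfect matchings $M_1,\ldots,M_6$ provided by Conjecture \ref{bf}, which covers every edge of $G$ exactly twice, and show that each $M_i$ satisfies the hypothesis of Lemma \ref{perfect}. Concretely, fix a 3-edge-cut $\delta(X)$ of $G$. Since $G$ is cubic, summing degrees over $X$ gives $3|X| = 2|E(G[X])| + 3$, which forces $|X|$ to be odd. Because $M_i$ is a perfect matching, $|M_i \cap \delta(X)| + 2|M_i \cap E(G[X])| = |X|$, so $|M_i \cap \delta(X)|$ is odd, hence equal to $1$ or $3$.

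Next I would use the double-counting identity
\[
\sum_{i=1}^{6} |M_i \cap \delta(X)| \;=\; \sum_{e \in \delta(X)} |\{i : e \in M_i\}| \;=\; 2 \cdot |\delta(X)| \;=\; 6.
\]
Since each of the six summands on the left is an odd number in $\{1,3\}$, and the sum is $6$, every summand must equal $1$. Thus each $M_i$ meets every 3-edge-cut of $G$ in exactly one edge. Lemma \ref{perfect} then gives that each $M_i$ is deletable, so there are orientations $D_1,\ldots,D_6$ of $G$ such that $\vec{M_i}$ is deletable in $D_i$.

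Finally, since every edge of $G$ lies in exactly two of the six matchings, removing any one $M_j$ leaves a family of five matchings that still covers $E(G)$: each edge belongs to the removed matching at most once, so at least one of its two occurrences survives. Keeping, say, $D_1,\ldots,D_5$ yields five orientations such that every edge becomes a deletable arc in at least one of them, proving $f(G)\leq 5$.

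I do not expect a real obstacle here; the argument is essentially the parity/counting observation that the Berge--Fulkerson covering forces each matching to meet every 3-edge-cut exactly once, after which Lemma \ref{perfect} and a trivial pigeonhole on the double-cover finish the job.
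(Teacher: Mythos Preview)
Your proposal is correct and follows essentially the same approach as the paper: both use the parity of $|X|$ to force each $M_i$ to meet every 3-edge-cut in an odd number of edges, then the double-cover counting $\sum_i |M_i\cap\delta(X)|=6$ to conclude each intersection is exactly one, apply Lemma~\ref{perfect}, and drop one matching. Your write-up is slightly more explicit in the arithmetic, but the argument is the same.
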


\begin{proof}
By assumption, there exist 6 perfect matchings $M_1, \ldots, M_6$ of $G$ covering each edge of $G$ exactly twice. 

Let $\delta(X)$ be a 3-edge-cut of $G.$ Since $G$ is cubic and $d(X)=3$, we obtain that $|X|$ is odd. Then, since  $M_i$ is a perfect matching, $\delta(X)$ intersects each $M_i$. Since each of the 3 edges of $\delta(X)$  belongs to exactly 2 $M_i$'s, $\delta(X)$ intersects each of $M_1, \ldots, M_6$ exactly once. 
 It follows by Lemma \ref{perfect} that each of  $M_1, \ldots, M_6$ is deletable. As every edge of $G$ is covered by at least one of $M_1,\ldots,M_5$, it follows that $f(G)\leq 5$. 
\end{proof}


%
%
%
%

\subsection{Non-cubic case}
\medskip

We first show how to prove the general case of Theorem \ref{7}.
\medskip

\begin{proof} (of Theorem \ref{7}) Let  $G$ be a counterexample minimizing $|V(G)|$.

\begin{claim}\label{cutvertex}
$G$ is 2-vertex-connected.
\end{claim}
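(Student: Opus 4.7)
The plan is a standard decomposition argument at a cut vertex. I would argue by contradiction, supposing that some $v \in V(G)$ is a cut vertex, and let $V_1, \ldots, V_k$ (with $k \geq 2$) denote the vertex sets of the components of $G-v$. For each $i$, set $G_i := G[V_i \cup \{v\}]$; each $G_i$ has strictly fewer vertices than $G$.

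The first step is to verify that each $G_i$ is $3$-edge-connected. For any nonempty proper subset $X$ of $V_i \cup \{v\}$, after complementing if necessary we may take $X \subseteq V_i$. If $X \subsetneq V_i$, no edge of $G$ can leave $X$ to reach any $V_j$ with $j \neq i$ since $V_i$ is a component of $G-v$, so $\delta_{G_i}(X) = \delta_G(X)$, which has size at least $3$. For $X = V_i$, the cut $\delta_{G_i}(V_i)$ equals $\delta_G(V_i)$, which is again of size at least $3$ by the $3$-edge-connectivity of $G$. Hence every $G_i$ is $3$-edge-connected, and by minimality of the counterexample there exist orientations $D_i^1, \ldots, D_i^7$ of $G_i$ in which every edge of $G_i$ becomes deletable in at least one $D_i^j$.

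For each $j \in \{1, \ldots, 7\}$, I would glue the orientations $D_1^j, \ldots, D_k^j$ along the common vertex $v$ to obtain an orientation $D^j$ of $G$. Each $D^j$ is strongly connected, since any two vertices in distinct $V_i$'s can be linked through $v$ using the strong connectivity of the individual pieces. Moreover, if $e \in E(G_i)$ is deletable in $D_i^j$, then $D^j - \vec e$ is the union of the strongly connected digraphs $D_i^j - \vec e$ and $D_{i'}^j$ (for $i' \neq i$), all sharing the vertex $v$, so $D^j - \vec e$ is strongly connected as well. Hence every edge of $G$ becomes deletable in some $D^j$, giving $f(G)\leq 7$ and contradicting the counterexample hypothesis.

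The only genuinely delicate step is the $3$-edge-connectivity of the pieces $G_i$, which rests on the observation $d_G(V_i) \geq 3$ (immediate because $\delta_G(V_i)$ consists precisely of the edges from $V_i$ to $v$); once this is in hand, the gluing step is a routine application of the fact that strongly connected digraphs meeting at a common vertex combine into a strongly connected digraph.
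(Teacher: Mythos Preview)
Your argument is correct and follows essentially the same route as the paper's proof: split $G$ at the cut vertex $v$, observe that the pieces are smaller $3$-edge-connected graphs (the paper phrases this via contraction and invokes Proposition~\ref{dgcnnctd}(a), whereas you verify it directly), apply the minimality hypothesis to each piece, and then glue the resulting orientations at $v$ (the paper uses Proposition~\ref{strnglcnnctd}(b) for this step, while you argue it by hand). The only cosmetic difference is that you allow $k\geq 2$ components where the paper bundles them into two, which changes nothing.
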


\begin{proof}
For the sake of a contradiction, assume that $G$ has a cut vertex $v$. So $G$ has two non-trivial subgraphs $G_1$ and $G_2$ such that $G_1=G/G_2$ and $G_2=G/G_1$. As $G$ is 3-edge-connected, so is $G_i$ by Proposition \ref{dgcnnctd}(a). Since $G_i$  is smaller than $G$, $G_i$ has Frank number at most $7$. So there exist $7$ orientations $D_j^i$ of $G_i$ such that for each edge $e$ of $G_i$, one of $D_j^i-\vec e$ is strongly connected. We can now construct the 7 orientations $D_j$ of $G$ by giving each edge in $G_i$ its orientation in $D_j^i$ also in $D_j$. Now consider an edge $e$ of $G_i$ and let $D_j^i-\vec e$ be strongly connected. Since $D_j^i-\vec e=(D_j-\vec e)/D_j^{3-i}$ and $D_j^{3-i}$ are strongly connected,  Proposition \ref{strnglcnnctd}(b) implies that so is $D_j-\vec e$.
 It follows that $G$ has Frank number at most $7$, a contradiction.
\end{proof}
\bigskip

Let $H_G$ be a cubic extension of $G$ as defined in Section \ref{reduc}. By Claim \ref{cutvertex} and Proposition \ref{red34}(a), $H_G$ is  3-edge-connected. Then, by Theorem \ref{7cub}, the Frank number of $H_G$ is at most $7$, that is there exist 7 orientations $D'_i$ of $H_G$ such that for each edge $e$ of $H_G$, one of $D'_i-\vec e$ is strongly connected. Let $D_i$ be the orientation of $G$ obtained from $D'_i$ by contracting the subgraphs $C_v$ for all $v\in V(G).$ For any $e\in E(G)\subset E(H_G)$, one of $D'_i-\vec e$ is strongly connected, therefore, by Proposition \ref{strnglcnnctd}(a), so is $D_i-\vec e$. It follows that the Frank number of $G$ is at most $7$, a contradiction.
\end{proof}
\bigskip

The same reduction  and Theorem \ref{fn5} show Theorem \ref{bftheo}.


\section{Essentially 4-edge-connected graphs}\label{e4ecg}

This section is dedicated to proving Theorem \ref{34ec}. Again, first we prove the result for cubic graphs and then we show how it implies the non-cubic case.

\subsection{Cubic case}
\medskip

In the case of essentially 4-edge-connected graphs, we can show that every matching is deletable. We prove the following slightly stronger statement.

\begin{lemma}\label{matching}
Let $G$ be an essentially 4-edge-connected graph, $M$ a matching of $G$ and ${\cal C}$ a cycle packing of $G-M$. Then there exists an orientation of $G$ in which $\vec{M}$ is deletable and each cycle of  ${\cal C}$ is oriented as a circuit.
\end{lemma}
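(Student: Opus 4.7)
The plan is to reduce to orienting the contraction $G' := G/\mathcal{C}$ and then combine an admissible odd-vertex pairing with Proposition \ref{eulspecial}. By Proposition \ref{dgcnnctd}(b), $G'$ is essentially $4$-edge-connected, and since $E(\mathcal{C}) \cap M = \emptyset$ we have $M \subseteq E(G')$. If I can produce an orientation $D'$ of $G'$ in which $\vec M$ is deletable, then orienting each $C \in \mathcal{C}$ as a circuit yields an orientation $D$ of $G$; for every $e \in M$ one has $(D - \vec e)/\mathcal{C} = D' - \vec e$ and the cycles of $\mathcal{C}$ remain strongly connected circuits in $D - \vec e$, so Proposition \ref{strnglcnnctd}(b) gives that $D - \vec e$ is strongly connected, i.e., $\vec M$ is deletable in $D$.

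To build $D'$, let
\[ V' := \{ v \in V(G') : d_{G'}(v) = 3 \text{ and } v \text{ is incident to an edge of } M \}, \]
and for each $v \in V'$ denote by $\{f_v, g_v\}$ the two non-$M$ edges incident to $v$ in $G'$. By Theorem \ref{fort}, $G'$ admits an admissible odd-vertex pairing $P$; each $v \in V'$, being of odd degree in $G'$, is incident to exactly one pairing edge. By Proposition \ref{eulspecial} applied to the Eulerian graph $G' + P$ with the prescribed pairs, there exists an Eulerian orientation $D^+$ of $G' + P$ in which exactly one of $f_v, g_v$ enters $v$ for every $v \in V'$. I define $D'$ to be the restriction of $D^+$ to $G'$; by admissibility of $P$, the orientation $D'$ is well-balanced and, in particular, strongly connected.

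To conclude, I would verify via Proposition \ref{efface} that $\vec M$ is deletable in $D'$, i.e., that for every nonempty proper $X \subset V(G')$, $\delta^-_{D'}(X)$ contains a non-$M$ arc or at least two arcs. At a singleton $\{v\}$ with $v \in V'$, a direct analysis of the two possible directions of the pairing arc at $v$ in $D^+$ shows that the Proposition \ref{eulspecial} constraint forces the $M$-arc at $v$ onto the majority side: either the pairing arc enters $v$, in which case the $M$-arc leaves $v$ and $\delta^-_{D'}(\{v\})$ consists of a single non-$M$ in-arc, or the pairing arc leaves $v$, in which case the $M$-arc enters $v$ together with a second in-arc. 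Both cases handle the condition at $\{v\}$ and at $V(G') \setminus \{v\}$ simultaneously. At other singletons, the condition is either vacuous (if $v$ is not $M$-incident) or follows from Eulerian balance of $D^+$ as $|\delta^\pm_{D'}(\{v\})| \geq 2$ when $d_{G'}(v) \geq 4$. For non-trivial $X$, essential $4$-edge-connectivity gives $d_{G'}(X) \geq 4$, and whenever both sides of $X$ contain a vertex of $G'$-degree at least $4$, well-balancedness applied to such a pair (for which $\lambda_{G'} \geq 4$) yields $|\delta_{D'}^-(X)| \geq 2$.

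The main obstacle I anticipate is handling non-trivial cuts $X$ one of whose sides consists entirely of degree-$3$ vertices, since there well-balancedness only guarantees $|\delta_{D'}^-(X)| \geq 1$. I expect to resolve this by combining the parity constraint $d_{G'+P}(X) \equiv 0 \pmod 2$ with the matching property of $M$ (which, for $|X|, |V \setminus X| \geq 2$ and $d_{G'}(X) \geq 4$, forces $\delta(X)$ to contain a non-$M$ edge) and a sufficiently careful choice of the admissible pairing $P$, so that either an additional arc enters $X$ or $\delta_{D'}^-(X)$ already contains a non-$M$ arc.
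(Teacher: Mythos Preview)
Your overall framework (admissible pairing plus Proposition \ref{eulspecial}, then lift through contracted pieces) matches the paper's, and your singleton analysis and your handling of cuts with high-degree vertices on both sides are fine. The genuine gap is exactly the case you flag at the end: a nontrivial $X$ consisting entirely of degree-$3$ vertices. Your proposed remedies (parity of $d_{G'+P}(X)$, existence of a non-$M$ edge in $\delta(X)$, an unspecified ``careful'' pairing) do not close this; knowing that $\delta(X)$ contains a non-$M$ edge says nothing about its orientation, and there is no mechanism that lets you impose cut-by-cut constraints on an admissible pairing.

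The paper resolves this by contracting more than you do: instead of $G/\mathcal{C}$, it sets $G' = G/\mathcal{F}$ where $\mathcal{F}$ is the family of \emph{maximal $2$-edge-connected subgraphs} of $G-M$. The point is that then $G'-M$ is a forest. With the Proposition \ref{eulspecial} constraint applied at every degree-$3$ vertex (each such vertex is original and has two non-$M$ edges), every degree-$3$ vertex has a non-$M$ arc leaving it; so if $X$ consists only of degree-$3$ vertices and no non-$M$ arc leaves $X$, those out-arcs force a cycle inside $G'[X]$ using only non-$M$ edges, contradicting the forest property. Your contraction $G/\mathcal{C}$ does not give a forest in the complement of $M$, which is precisely why your argument stalls. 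The cycle-packing requirement is then recovered separately: each cycle of $\mathcal{C}$ lies in some $F\in\mathcal{F}$, and since $F$ is $2$-edge-connected one can orient $F$ strongly connected with every cycle of $\mathcal{C}$ inside it as a circuit (contract the cycles, orient by Theorem \ref{faible}, blow back up via Proposition \ref{strnglcnnctd}(b)).
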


\begin{proof}
Let {\boldmath ${\cal F}$} be the set of maximal 2-edge-connected subgraphs of $G-M$.
Let {\boldmath $G'=(V',E' \dot\cup M)$} be the graph obtained from $G$ by contracting each graph of ${\cal F}$. Note that $G'-M$ is a forest. Since $G$ is essentially 4-edge-connected, by Proposition \ref{dgcnnctd}(b), so is $G'$ and every vertex  of degree 3 in $G'$ is an original vertex of $G.$ 
Then,  since $M$ is a matching of $G$, every vertex $v$ of degree 3 in $G'$ is incident to at least 2  edges $e_v^1, e_v^2$ in $E'$. 

By Theorem \ref{fort}, there exists an admissible pairing $P$ of $G'$. 
As $G'+P$ is Eulerian, Proposition \ref{eulspecial} yields that $G'+P$ has an Eulerian orientation $\vec{G'}+\vec{P}$ such that for each vertex $v$ of degree 3 in $G'$, one of $\vec e_v^1, \vec e_v^2$  enters $v$ and the other one leaves $v.$  
By the definition of admissible pairings, $\vec{G'}$ is a well-balanced orientation of $G'.$ 

For all $F\in {\cal F}$, by Proposition \ref{dgcnnctd}(a), Theorem \ref{faible} and Proposition \ref{strnglcnnctd}(b), there exists a strongly connected orientation $\vec F$ of $F$ such that each cycle of  ${\cal C}$ contained in $F$ is oriented as a circuit. 

Let $\vec{G}$ be the orientation of $G$ obtained by combining $\vec{G'}$ and $\vec F$ for all $F\in {\cal F}$. Proposition \ref{strnglcnnctd}(b) yields that $\vec{G}$ is strongly connected. Since each cycle $C$ of ${\cal C}$ belongs to some $F\in {\cal F},$ $C$ is oriented as a circuit in $\vec{G}.$

We will finish the proof by showing that $\vec{G}-\vec e$ is strongly connected for all $e \in M$. 
Since $\vec F$ is strongly connected for all $F\in {\cal F}$ and $\bigcup_{F \in {\cal F}}E(F)$ contains no edge in $M,$ it suffices to prove, by Proposition \ref{strnglcnnctd}(b), that $\vec{G'}-\vec e$ is strongly connected for all $e \in M$.
Let $X$ be a subset of $V'$.  
By Proposition \ref{efface}, it is enough to prove that either at least two arcs  or at least one arc of $\vec{E'}$  leave $X.$

If there are $x\in X$ and $y\in V'-X$  of degree at least 4, then, since $G'$ is essentially 4-edge-connected,  there is no 3-edge-cut separating $x$ and $y$ in $G'$ and therefore, as $\vec{F'}$ is well-balanced, there are 2 arcs leaving  $X$, and we are done. 

Hence, by considering $V'-X$ and $\cev {G'}$ if necessary, we may  assume without loss of generality that $X$ only contains vertices of degree 3 and there is no arc of $\vec{E'}$  leaving $X$. By construction, every vertex $v$ of $X$ has at least one arc $\vec{e}_v^1$ or $\vec{e}_v^2$ of  $\vec{E'}$ leaving $v$. As there is no arc of $\vec{E'}$ leaving $X$, we obtain that $\vec{G'}[X]$ contains a circuit $\vec{C}$ of arcs in $\vec{E'}$. This cycle $C$ provides a contradiction since $G'-M$ is a forest.
 \end{proof}
\medskip

We are now ready to prove a cubic version of Theorem \ref{34ec}.

\begin{theo}\label{cubice4ec}
Let $G$ be a cubic essentially 4-edge-connected graph. Then $f(G)\leq 3$.
\end{theo}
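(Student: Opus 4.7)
My plan is to apply Lemma~\ref{matching} three times, using three matchings whose union is (almost) all of $E(G)$, and then to handle the few leftover edges using the extra circuit structure provided by the same lemma.

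By Theorem~\ref{pettheo}, $G$ has a perfect matching $M$; then $G-M$ is a $2$-factor $\mathcal{C}=\{C_1,\dots,C_k\}$ consisting of vertex-disjoint cycles. For each $C_j$, I split $E(C_j)$ into two matchings $A_j,B_j$: if $|C_j|$ is even, $A_j\cup B_j=E(C_j)$; if $|C_j|$ is odd, $A_j\cup B_j=E(C_j)\setminus\{f_j\}$ for some edge $f_j$. Setting $A=\bigcup_j A_j$ and $B=\bigcup_j B_j$, both are matchings in $G$ because the cycles of $\mathcal{C}$ are vertex-disjoint. I then apply Lemma~\ref{matching} to the triples $(M,\mathcal{C})$, $(A,\mathcal{C}_A)$, and $(B,\mathcal{C}_B)$ for suitable cycle packings $\mathcal{C}_A$ of $G-A$ and $\mathcal{C}_B$ of $G-B$, obtaining orientations $D_1,D_2,D_3$ in which $\vec{M}$, $\vec{A}$, $\vec{B}$ respectively are deletable. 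Hence every edge of $M\cup A\cup B$ becomes a deletable arc in at least one of the three orientations.

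The uncovered edges are precisely $\{f_j : C_j\text{ is odd}\}$, one per odd cycle of the $2$-factor, and it remains to show each such $f_j$ is deletable in $D_1$. Since $G$ is essentially $4$-edge-connected, every non-trivial edge-cut of $G$ has at least $4$ edges; the well-balanced property underlying the construction of $D_1$ (via Theorem~\ref{fort}) then places at least two arcs in each direction across every non-trivial cut, so removing any single arc never disconnects on a non-trivial cut. Consequently, for an arc $\vec{f_j}=uv$ on the circuit $C_j$ in $D_1$, the only possible obstructions to $D_1-\vec{f_j}$ being strongly connected are the trivial cuts at $u$ and $v$; these vanish exactly when the $M$-edge at $u$ leaves $u$ and the $M$-edge at $v$ enters $v$.

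The main obstacle of the proof is therefore to refine the application of Lemma~\ref{matching} to $(M,\mathcal{C})$ so that the resulting orientation $D_1$ can be \emph{prescribed} to satisfy this local trivial-cut condition at every leftover edge $f_j$ simultaneously. This amounts to choosing a global sign labelling of $V(G)$ that is bichromatic on every $M$-edge and on every $f_j$, and then orienting each circuit $C_j$ coherently with the labelling. The freedom afforded by the choice of admissible odd-vertex pairing in Theorem~\ref{fort}, combined with the fact that the constraints coming from distinct leftover edges touch vertex-disjoint endpoints on vertex-disjoint cycles, should be enough to satisfy all constraints at once; carrying out this consistency check rigorously, using essential $4$-edge-connectivity to rule out any interaction between distinct leftover edges through small cuts, is the technical heart of the argument.
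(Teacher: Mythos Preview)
Your overall architecture—three applications of Lemma~\ref{matching} to three matchings covering almost all of $E(G)$—matches the paper. The gap is entirely in how you treat the leftover edges $f_j$.

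You fix the $f_j$'s \emph{before} building $D_1$ and then hope to steer $D_1$ so that every $\vec{f_j}$ is deletable. Your justification for the non-trivial cuts is not correct as stated: the well-balanced orientation produced inside Lemma~\ref{matching} is an orientation of $G'=G/\mathcal{C}$, not of $G$, so it says nothing directly about a non-trivial cut $X\subset V(G)$ that slices through a cycle $C_j$. For such an $X$ with $X\cap V(C_j)$ a single arc of the circuit, exactly one circuit arc (namely $\vec{f_j}$) leaves $X$, and you still need an $M$-arc leaving $X$; the $2$-arc-connectivity of $\vec{G'}$ does not force this, because in $G'$ the whole cycle $C_j$ is a single vertex and the distinction between $X\cap V(C_j)$ and $V(C_j)\setminus X$ is lost. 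Controlling only the two $M$-edges at the endpoints of $f_j$ is therefore not obviously sufficient, and the ``consistency check'' you defer is not just bookkeeping—it is a genuine additional argument you have not supplied.

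The paper avoids this difficulty by reversing the order of operations. It first builds $D_1$ from Lemma~\ref{matching} (with $M_1$ deletable and every $C_i$ a circuit), and then invokes Lemma~\ref{cyclearc}: in any strongly connected orientation of a $3$-edge-connected graph, every circuit contains at least one deletable arc. This hands you, for free, a deletable arc $\vec{e_i}$ in each $C_i$; removing these $e_i$ from the $2$-factor leaves a union of paths, which splits into two matchings $M_2,M_3$ covered by two further applications of Lemma~\ref{matching}. No prescription on $D_1$ is needed, and no analysis of non-trivial cuts beyond what is already inside Lemmas~\ref{matching} and~\ref{cyclearc}. If you replace your attempt to force the pre-chosen $f_j$'s to be deletable by an appeal to Lemma~\ref{cyclearc} (letting the lemma choose the leftover edge on every cycle, odd or even), your proof becomes the paper's.
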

 
\begin{proof} 
Since $G$ is cubic and $2$-edge-connected, by Theorem \ref{pettheo}, $G$ has a perfect matching  $M_1$ and the connected components of $G-M_1$ form a packing ${\cal C}$ of cycles. 
By Lemma \ref{matching}, there exists an orientation $D_1$ of $G$ such that each cycle of ${\cal C}$ is oriented as a circuit and $M_1$ is deletable in $D_1.$  By Lemma \ref{cyclearc}, each  $C_i\in {\cal C}$ contains a deletable  arc $\vec e_i$ in $D_1$. Note that the connected components of  $G-M_1-\cup \{e_i:{C_i\in {\cal C}}\}$ form a packing of  paths which is the union of two matchings $M_2$ and $M_3.$ By Lemma \ref{matching}, there exist orientations $D_2$ and $D_3$ of $G$ such that $M_2$ is deletable in $D_2$ and $M_3$ is deletable in $D_3.$ Since $E(G)=M_1\cup M_2\cup M_3\cup \{e_i:{C_i\in {\cal C}}\},$ Theorem \ref{cubice4ec} follows.
 \end{proof}
\subsection{Non-cubic case}
\medskip

We now generalize the results of the previous part to arbitrary essentially $4$-edge-connected graphs.
\medskip

\begin{proof} (of Theorem \ref{34ec}).
 Let  $G=(V,E)$ be a counterexample minimizing $|V|$.

\begin{claim}\label{cutvertexbridge}
$G-v$ is 2-edge-connected for all $v\in V.$
\end{claim}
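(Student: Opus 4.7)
The plan is to argue by contradiction, supposing that $G - v$ is not 2-edge-connected for some $v \in V$. Then there exists a nonempty proper subset $X$ of $V - v$ with $|\delta_{G-v}(X)| \le 1$; set $Y = V - v - X$. I split into two cases according to whether $|\delta_{G-v}(X)| = 0$ (so $v$ is a cut vertex of $G$) or $|\delta_{G-v}(X)| = 1$ (so $G - v$ contains a bridge $e = xy$ with $x \in X$ and $y \in Y$).

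In the cut vertex case the argument is entirely analogous to Claim \ref{cutvertex} in the proof of Theorem \ref{7}. I would set $G_1 = G[X \cup \{v\}]$ and $G_2 = G[Y \cup \{v\}]$, so that $G_i = G/G_{3-i}$ and $E(G_1) \cup E(G_2) = E(G)$ is a partition. By Proposition \ref{dgcnnctd}(b) each $G_i$ is essentially 4-edge-connected, and both are strictly smaller than $G$. Minimality of $G$ then gives $f(G_i) \le 3$, and three orientations of $G$ can be constructed by combining witnessing orientations of $G_1$ with those of $G_2$ edge-by-edge; strong connectivity of each combined orientation and deletability of every edge in the appropriate combined orientation follow from Proposition \ref{strnglcnnctd}(b), contradicting $f(G) > 3$.

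In the bridge case I would analogously set $G_1 = G/Y$ and $G_2 = G/X$, contracting each side of the bridge to a single vertex. Both are essentially 4-edge-connected by Proposition \ref{dgcnnctd}(b), and when $|X|, |Y| \ge 2$ both are strictly smaller than $G$, giving $f(G_i) \le 3$ by minimality; the trivial-side sub-cases $|X| = 1$ or $|Y| = 1$ can be handled separately, since essential 4-edge-connectivity then forces the trivial side to consist of a single vertex joined to $v$ by several parallel edges and to the opposite side by $e$ alone, which admits a direct orientation argument. The bridge $e$ is represented both in $G_1$ (as the edge joining $x$ to the contracted $Y$-vertex) and in $G_2$ (as the edge joining $y$ to the contracted $X$-vertex), so combining witnessing orientations requires consistent orientation of $e$ on the two sides. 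I would address this by exploiting the freedom to permute the three witnessing orientations on each side and, if necessary, to reorient a single orientation on one side so that three compatible pairs can be selected; the verification that deletability and strong connectivity transfer across the combination then proceeds as in the cut vertex case via Proposition \ref{strnglcnnctd}(b).

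The main obstacle is precisely the consistency requirement for the orientation of the bridge $e$ in the second case: a naive pigeonhole between the three witnessing orientations of $G_1$ and $G_2$ on the binary orientation of $e$ need not yield three matchable pairs, so the compatible combination has to be engineered rather than read off directly.
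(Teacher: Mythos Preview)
Your cut-vertex case is correct and matches the paper. The bridge case, however, contains a real gap: with your decomposition $G_1 = G/Y$ and $G_2 = G/X$, the vertex $v$ survives in both $G_1$ and $G_2$, so the two graphs share not only the bridge $e$ but \emph{every} edge of $\delta_G(v)$. Hence when you merge $D^1_j$ and $D^2_j$ into an orientation of $G$ you must make them agree on all of $\delta_G(v)\cup\{e\}$ simultaneously; permuting the three orientations and reversing one of them cannot in general achieve this. You correctly identify a consistency obstacle, but you underestimate its size.

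The paper sidesteps the issue by contracting $A_{3-i}\cup\{v\}$ rather than $A_{3-i}$ alone, so that $v$ is absorbed into the contracted vertex on each side and $E(G_1)\cap E(G_2)=\{e_0\}$. The consistency argument is then easy: re-index so that $e_0$ is deletable in both $D^1_1$ and $D^2_1$, and for each $j$ independently replace $D^2_j$ by $\cev{D^2_j}$ if necessary to make the two orientations of $e_0$ agree (reversing all arcs preserves both strong connectivity and deletability). This decomposition also makes your trivial-side sub-cases disappear, since $|V(G_i)|=|A_i|+1<|V(G)|$ holds as soon as $A_{3-i}\neq\emptyset$.
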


\begin{proof}
For the sake of a contradiction, assume that $G-v$ is not 2-edge-connected for some $v\in V.$ If $G-v$ is disconnected, we obtain a contradiction using the same argument as in the proof of Claim \ref{cutvertex}. We therefore have a partition $A_1\cup A_2$ of $V-\{v\}$ such that $A_1$ and $A_2$ are only connected by a single edge $e_0$ in $G-v$. Let us denote the end-vertices of $e_0$ by $u_i\in A_i.$  Consider the graph $G_{i}$ that arises from $G$ by contracting $A_{3-i} \cup \{v\}$ into a vertex $v_i$. Note that $E(G_1) \cap E(G_2) =\{e_0\}$. Since $G$ is essentially 4-edge-connected, so is $G_i$. Moreover, $G_i$ is smaller than $G.$ It follows that  there exist 3 orientations $D^i_{j}$  of $G_i$ such that one of $D^i_{j}-\vec e$ is strongly connected for all $e\in E(G_i)$. We may suppose that $D^1_{1}-\vec e_0$ and $D^2_{1}-\vec e_0$ are strongly connected. Reversing the arcs in $D^i_{j}$ if needed, we may assume that $e_0$ has the same orientation in $D^1_{j}$ and $D^2_{j}$. We can construct the 3 orientations $D_j$ of $G$ by merging $D^1_{j}$ and $D^2_{j}$. We will finish the proof by showing that for all $e \in E,$ there exists a $j$ such that $D_j-\vec e$ is strongly connected. Let $e \in E$ and $j \in \{1,2,3\}$ such that both $D^1_{j}-\vec{e}$ and $D^2_{j}-\vec{e}$ are strongly connected.  Observe that if $e \neq e_0$, then either $D^1_{j}-\vec e=D^1_{j}$ or $D^2_{j}-\vec e=D^2_{j}$. Assume that there is a nonempty, proper subset $X$  of $V$ that has no arc leaving in $D_j-\vec e$. Without loss of generality, we may assume that $v \in X$. 
As $(X\cap A_i)\cup \{v_i\}$ has an arc leaving in  $D^i_{j}-\vec  e$, $e_0$ must be directed away from $v_i$ in  $D^i_{j}$ for $i=1,2$.  This is a contradiction as $D^1_{j}$ and $D^2_{j}$ were chosen to  both have the same orientation of $e_0$.
\end{proof}
\bigskip

Let $H_G$ be a cubic extension of $G$ as defined in Section \ref{reduc}. By Claim \ref{cutvertexbridge} and Proposition \ref{red34}(b), $H_G$ is a cubic essentially 4-edge-connected graph. Then, by Theorem \ref{cubice4ec}, the Frank number of $H_G$ is at most $3$. There exist therefore $3$ orientations $D'_j$ of $H_G$ such that for each edge $e \in E(H_G)$, there is some $j \in \{1,2,3\}$ such that $D'_j-\vec e$ is strongly connected. Consider now the $3$ orientations $D_j$ of $G$ which arise from $D'_j$  by contracting the subgraphs $C_v$ for all $v\in V.$ By Proposition \ref{strnglcnnctd}(a), if $D'_j-\vec e$ is strongly connected for an edge $e\in E$ , so is $D_j-\vec e$. It follows that the Frank number of $G$ is at most $3$, a contradiction.
\end{proof}

\section{The Petersen graph}\label{petorient}

In this section, we show that there are graphs of Frank number higher than two, more precisely we prove Theorem \ref{pet3}. While this result can also be established computationally, we prefer to give a proof by hand.
\medskip

\begin{proof} (of Theorem \ref{pet3})
Let {\boldmath $G$} $=(V,E)$ be the Petersen graph, see Figure \ref{fig0}. We frequently make use of the symmetry properties of $G$.  
By Theorem \ref{34ec} and since $G$ is essentially 4-edge-connected, but not 4-edge-connected, it suffices to prove that its Frank number is different from 2.  Suppose that $G$ has Frank number 2 and let {\boldmath $D_1$} $=(V,A_1)$ and {\boldmath $D_2$} $=(V,A_2)$ be  two orientations of $G$ such that

\begin{equation}\label{franknumber2}
\text{$D_1-\vec e$ or $D_2-\vec e$ is strongly connected for each edge $e$ of $G$.} \tag{$*$}
\end{equation}  

We say that an arc of $D_1$ is {\it stable} if the same arc exists in $D_2,$ otherwise it is {\it changing}. 
Let {\boldmath $S$} and {\boldmath $C$} be the set of stable and changing arcs, respectively.
Note that $D_1$ and $\cev{D_2}$ also satisfy \eqref{franknumber2} and stable and changing arcs are exchanged. Hence, whatever is proved for stable arcs is also true for changing arcs.
\medskip

We first show that $S$ and $C$ induce a $2$-edge-coloring of $G$ with certain properties and then that no such $2$-edge-coloring exists. Observe that none of the considered colorings are required to be proper.
For a  2-edge-coloring  $R,B$ of $G,$ we define  an auxiliary graph {\boldmath $H^{R,B}$} $:=(V,F)$  where $uv\in F$ if there exists a 3-path $tuvw$ in $G(R)$ or in $G(B)$ or there exists  a $(u,v)$-path that is a connected component of $G(R)$ or of $G(B)$.

 \begin{lemma}\label{main1}
	$G$ has a 2-edge-coloring $R,B$ such that 
 		\begin{equation}
 			\text{\rm  no monochromatic 3-star exists,}\label{star}
			\end{equation}
			\begin{equation}
 			\text{\rm  $H^{R,B}$ is bipartite.}\label{oddcycle}
 		\end{equation}
\end{lemma}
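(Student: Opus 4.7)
The plan is to take $R:=S$ to be the set of stable arcs and $B:=C$ the set of changing arcs. Before verifying \eqref{star} and \eqref{oddcycle} I would first observe that both $D_1$ and $D_2$ must themselves be strongly connected: otherwise, by \eqref{franknumber2}, every arc of the remaining orientation would be deletable, making it $2$-arc-connected and, by Theorem \ref{faible}, forcing $G$ to be $4$-edge-connected, which is false for the Petersen graph.

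For \eqref{star}, suppose all three edges at some vertex $v$ are stable. Then $D_1$ and $D_2$ coincide on the three arcs at $v$, and as $G$ is cubic one of the in- and out-degrees at $v$ equals $1$ in this common local orientation. Deleting the unique arc in that direction would turn $v$ into a source or a sink in both $D_1$ and $D_2$, contradicting \eqref{franknumber2}. The case where all three edges at $v$ are changing is handled by applying the same argument to the pair $(D_1,\cev{D_2})$, which also satisfies \eqref{franknumber2} but exchanges the roles of stable and changing arcs.

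For \eqref{oddcycle} the first step is to extract a structural consequence of \eqref{star} together with the strong connectivity of both $D_1$ and $D_2$: a short case check on the in- and out-degrees at a vertex $v$ shows that the two majority-colour edges at $v$ must form a transit pair in $D_1$ (one entering, one leaving), and hence also in $D_2$. Consequently, $G(R)$ and $G(B)$ each have maximum degree two and therefore decompose into paths and cycles; moreover every monochromatic $3$-path and every monochromatic path component is oriented as a directed path in $D_1$.

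To finish I would exhibit a proper $2$-colouring $\phi\colon V\to\{0,1\}$ of $H^{R,B}$. A natural candidate records, for each vertex $v$, whether the unique minority-colour arc at $v$ leaves or enters $v$ in $D_1$. For an $H^{R,B}$-edge $uv$ arising from a monochromatic $3$-path $tuvw$, the cut $X=\{u,v\}$ has its boundary in $D_1$ reduced to a single extremal arc of the $3$-path (either $\vec{tu}$ or $\vec{vw}$) precisely when $\phi(u)=\phi(v)$, so that this extremal arc cannot be deletable in $D_1$; a parallel analysis in $D_2$, using that only the changing arcs are reversed, is intended to show that the same extremal arc is non-deletable in $D_2$ either, contradicting \eqref{franknumber2}. $H^{R,B}$-edges of the second type, coming from a monochromatic path component $P$ with endpoints $u,v$, are treated by the analogous cut $X=V(P)$. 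I expect the main obstacle to be carrying out this cut analysis uniformly for both types of $H^{R,B}$-edges and both colour classes, as one must ensure that the would-be bad cut identifies an extremal arc that is simultaneously non-deletable in $D_1$ and in $D_2$, and possibly invoke the essential $4$-edge-connectivity and symmetry of the Petersen graph to close the residual cases.
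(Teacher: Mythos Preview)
Your setup matches the paper's: take $R=S$, $B=C$; argue \eqref{star} via the source/sink contradiction (and the $(D_1,\cev{D_2})$ swap); deduce the transit-pair structure so that each monochromatic component is a directed path or circuit in $D_1$; and bipartition by the direction of the minority arc. Since the two majority arcs always transit, your $\phi$ is literally the paper's bipartition by in-degree in $D_1$ (minority arc enters $v$ iff $d^-_{D_1}(v)=2$).

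The gap is in your verification of \eqref{oddcycle} for a monochromatic $3$-path $tuvw$. Take the stable case with $t\to u\to v\to w$ in $D_1$ and both changing arcs entering. Your cut $X=\{u,v\}$ has $\vec{vw}$ as its unique out-arc in $D_1$, so $vw$ is non-deletable there; but in $D_2$ the two changing arcs reverse and now $\vec{tu}$ is the unique in-arc of $X$, so it is $tu$ that is non-deletable in $D_2$. These are different edges, and \eqref{franknumber2} is not contradicted. The obstacle you anticipate is genuine and cannot be removed using the extremal arcs of the $3$-path.

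The paper's fix is to test the \emph{middle} edge $uv$ against singleton cuts: in the situation above, $\{u\}$ has $\vec{uv}$ as its only out-arc in $D_1$, while $\{v\}$ has $\vec{uv}$ as its only in-arc in $D_2$, so $uv$ is non-deletable in both orientations, contradicting \eqref{franknumber2}. For $H^{R,B}$-edges of the second type (endpoints $u,v$ of a monochromatic path component) no deletability argument is needed at all: each of $u,v$ is incident to exactly two edges of the other colour, which form a transit pair by the same structure result, and the single edge of the path at $u$ leaves while the one at $v$ enters (or conversely); hence $d^-_{D_1}(u)\neq d^-_{D_1}(v)$ directly. Replace your cut $X=\{u,v\}$ and $X=V(P)$ arguments by these two observations and the proof goes through.
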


\begin{proof}
We show that the 2-edge-coloring induced by $S$ and $C$ satisfies (\ref{star}) and (\ref{oddcycle}).
To show (\ref{star}) we need the following claim.

\begin{claim}\label{covering}
Each vertex is incident to at least one stable arc.
\end{claim}

\begin{proof}
Suppose that a vertex $v$ is incident only to changing arcs. Since $G$ is cubic and $D_1$ is strongly connected, either the in-degree or the out-degree of $v$ is 1, say $\vec{e}$ is the only arc entering $v$. Then $\cev{e}$ is the only arc leaving $v$ in $D_2.$ Then, $D_1-\vec{e}$ and $D_2-\cev{e}$ are not strongly connected, which is a contradiction.
\end{proof}
 \medskip

To show (\ref{oddcycle}) we need the following claims.

\begin{claim}\label{dipath}
The weakly connected components of $D_1(S)$ are directed paths or circuits.
\end{claim}

\begin{proof}
By Claim \ref{covering} applied for stable arcs and then for changing arcs, the connected components of $D_1(S)$ are paths or cycles.
If two stable arcs are incident to a vertex $v$ then one of them enters and the other one leaves $v.$ Otherwise, let $e$ be the third arc incident to $v.$ Then, $D_1-\vec e$ and $D_2-\vec e$ are not strongly connected, which is a contradiction.
Now the claim follows.
\end{proof}
 
 \begin{claim}\label{end-vertices}
Let $P$ be a weakly connected component of $D_1(S)$ that is a directed $(u,v)$-path.
Then the in-degrees of  $u$ and $v$ in $D_1$ are of different parity.
\end{claim}

 \begin{proof}  Since $G$ is cubic and $u$ and $v$ are incident to exactly one stable arc in $D_1$, $u$ and $v$ are incident to exactly two changing arcs in $D_1$. Then, by Claim \ref{dipath} applied for $D_1(C)$, exactly one changing arc enters both $u$ and $v$ in $D_1$. Since $P$ is a directed path between $u$ and $v$, the claim follows.
 \end{proof}

\begin{claim}\label{miedge}
Let $tuvw$ be a 3-path in $D_1(S).$ Then the  in-degrees of $u$ and $v$ are of different parity in $D_1$.
\end{claim}

 \begin{proof} By Claim \ref{dipath}, exactly one stable arc enters both $u$ and $v$ in $D_1$. By Claim \ref{covering}, the two other arcs incident to $u$ and $v$ are changing. If both are entering or leaving then $D_1-uv$ and $D_2-uv$ are not strongly connected, which is a contradiction. Now the claim follows.
 \end{proof}

\begin{claim}\label{bipartite}
$H^{S,C}$ is a bipartite graph.
\end{claim}

\begin{proof}
Since $G$ is cubic and $D_1$ and $D_2$ are strongly connected, each vertex is of in-degree $1$ or $2.$ By Claims \ref{end-vertices} and \ref{miedge}, each edge of $H^{S,C}$ is between a vertex of in-degree 1 and a vertex of in-degree 2, so $H^{S,C}$ is bipartite.
\end{proof}
\medskip

By Claim \ref{covering} applied for $R:=S$ and $B:=C$ and by Claim \ref{bipartite}, Lemma \ref{main1} follows.
\end{proof}
\medskip

We show that $G$ does not admit any 2-edge-coloring satisfying  (\ref{star}) and (\ref{oddcycle}) and obtain a contradiction to Lemma \ref{main1}.
\medskip

The following result yields a strong property such a coloring would have to satisfy.

\begin{lemma}\label{propcol}
Let $R,B$ be a 2-edge-coloring satisfying  (\ref{star}) and (\ref{oddcycle}). Then $G$ has a 5-cycle that contains a monochromatic 4-path whose end-vertices are incident to 2 edges of the other color.
\end{lemma}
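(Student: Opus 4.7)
The plan is to first reduce the statement to a cleaner structural form and then rule out ``bad'' colorings via a combination of girth-based structural obstructions and a counting argument using Petersen's $5$-cycles.

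First I would observe that the Petersen graph satisfies the standard property that every pair of non-adjacent vertices shares exactly one common neighbor (by a double count: there are $10\cdot\binom{3}{2}=30$ length-$2$ paths and $\binom{10}{2}-15=30$ non-adjacent pairs, with girth $5$ allowing at most one common neighbor per pair). Since girth $5$ also forbids the end-vertices $u_1,u_4$ of any $3$-edge path from being adjacent, every $3$-edge path in $G$ extends uniquely to a $5$-cycle through the common neighbor of its endpoints. Moreover, the endpoints of a monochromatic $3$-edge path are incident to two edges of the opposite color exactly when both are leaves of the monochromatic subgraph, i.e., when the path is itself a full path component. Thus the conclusion of the lemma is equivalent to the assertion that $G(R)$ or $G(B)$ contains a path component of length exactly $3$.

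Assume for a contradiction that no such component exists. By \eqref{star}, each vertex has degree $1$ or $2$ in each color, so $G(R)$ and $G(B)$ decompose as disjoint unions of paths and cycles covering all vertices; by \eqref{oddcycle} together with girth $5$, monochromatic cycles have even length at least $6$. Using $|R|+|B|=15$, the impossibility $|R|\in\{5,10\}$ (whose complement would be a Hamilton $10$-cycle, not present in the Petersen graph), and the exclusion of length-$3$ components, a short enumeration yields, up to swapping $R$ and $B$, the cases: for $|R|=6$, $R=(2,2,1,1)$ with $B\in\{(9),(1,8C)\}$; and for $|R|=7$, $R\in\{(5,1,1),(4,2,1)\}$ with $B\in\{(7,1),(6,2),(4,4),(6C,1,1)\}$.

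Each of these cases is ruled out by one of two types of arguments. The first is a girth obstruction: when $G(R)$ contains a $5$- or $4$-path, its internal vertices are forced to coincide with the $B$-leaves, and any $B$-component of length $\le 2$ then produces a $G$-edge with both endpoints in this small set of $R$-internal vertices, which girth $5$ typically forbids. The second is a $5$-cycle counting argument: fix any bipartition $c\colon V\to\{0,1\}$ witnessing the bipartiteness of $H^{R,B}$, and count non-crossing $G$-edges (those with both endpoints receiving the same $c$-value). A careful edge-by-edge inspection, using that every middle edge of a long monochromatic component and every $1$-path edge is an $H^{R,B}$-edge (hence crosses), while each $2$-path $a v a'$ contributes exactly one non-crossing $G$-edge among $\{va,va'\}$ (since bipartiteness forces $c(a)\ne c(a')$), shows that in the remaining cases at most $2$ $G$-edges can be non-crossing under $c$. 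But any bipartition of $V(G)$ leaves at least $3$ non-crossing edges: each of the $12$ odd $5$-cycles of $G$ contains at least one non-crossing edge, and every edge of $G$ lies in exactly $12\cdot 5/15=4$ of these $5$-cycles, so $4\cdot(\#\text{non-crossing})\geq 12$, contradicting $\#\text{non-crossing}\le 2$.

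The main obstacle is verifying, in each of the roughly ten subcases, that either a girth obstruction kills the case immediately or that the non-crossing count under any valid $c$ is at most $2$; the vertex- and edge-transitivity of $\mathrm{Aut}(G)$ substantially reduces the bookkeeping by allowing one to fix a concrete representative in each case.
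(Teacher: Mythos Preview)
Your reduction rests on a misreading of the paper's convention: throughout Section~\ref{petorient} a ``$k$-path'' means a path with $k$ \emph{edges}, not $k$ vertices (compare Claim~\ref{miedge}, where ``$tuvw$ is a $3$-path''). Thus Lemma~\ref{propcol} asks for a $5$-cycle in which \emph{four} of the five edges lie in one colour class, say $R$, with the two endpoints $v_1,v_5$ of this $4$-edge path each carrying two $B$-edges. Your claimed equivalence with ``$G(R)$ or $G(B)$ has a path component of length exactly $3$'' fails in both directions. If $P=u_1u_2u_3u_4$ is a $3$-edge $R$-component and $w$ is the common neighbour of $u_1$ and $u_4$, then the fact that $u_1,u_4$ are $R$-leaves forces $u_4w,wu_1\in B$, so the $5$-cycle $u_1u_2u_3u_4w$ carries only three $R$-edges, not four. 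Conversely, the configuration of the lemma gives $v_1,v_5$ of $R$-degree~$1$ and, by \eqref{star}, $v_2,v_3,v_4$ of $R$-degree exactly~$2$, so the $R$-component through these vertices is the $4$-edge path $v_1v_2v_3v_4v_5$, never a $3$-edge one. Consequently your entire enumeration and the two elimination mechanisms are aimed at the wrong target. (A smaller slip: your exclusion of $|R|=5$ via non-Hamiltonicity is also incorrect, since the complement of a perfect matching in the Petersen graph can be two disjoint $5$-cycles rather than a Hamilton cycle; that case is excluded instead by~\eqref{oddcycle}.)

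By contrast, the paper's proof is a direct case analysis on a fixed labelling $a,\dots,j$ of the Petersen graph: it first forces a monochromatic $3$-path (Claim~\ref{3pathexists}), upgrades it to a monochromatic $4$-path inside a $5$-cycle (Claim~\ref{4pathexists}), and then uses \eqref{star} and \eqref{oddcycle} together with the symmetry of the graph to push the colouring at the endpoints until the required configuration appears. No component enumeration or global counting is involved.
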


\begin{proof}
We first show two weaker statements which are useful in the proof later on.
\begin{claim}\label{3pathexists}
$G$ has a monochromatic 3-path.
\end{claim}

\begin{proof}
Suppose not. 
Since $G$ is cubic, there are two adjacent edges of the same color, without loss of generality $ab, ae\in R$. Then, by the assumption for $deab,eabc, eabi$ and $heab$, we obtain that $de, bc, bi, eh\in B$. Thus,  by the assumption for $cbih, cbij$ and $dehg$, we obtain that $jihg$ forms a monochromatic 3-path, contradicting the assumption. See Figure \ref{fig1}.
\end{proof}
\medskip

\begin{figure}
    \centering
    \begin{subfigure}{0.25\textwidth}
        \includegraphics[width=\textwidth]{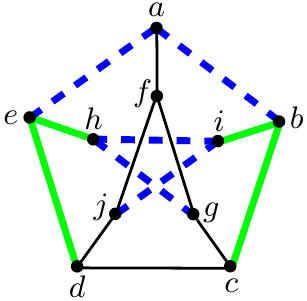}
        \caption{}\label{fig1}
    \end{subfigure}
    \hskip 1truecm
    \begin{subfigure}{0.25\textwidth}
        \includegraphics[width=\textwidth]{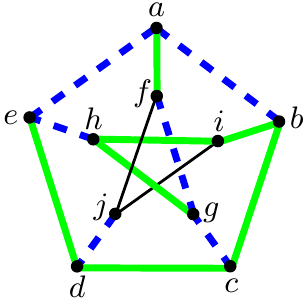}
        \caption{}\label{fig2}
    \end{subfigure}
    \hskip 1truecm
    \begin{subfigure}{0.25\textwidth}
        \includegraphics[width=\textwidth]{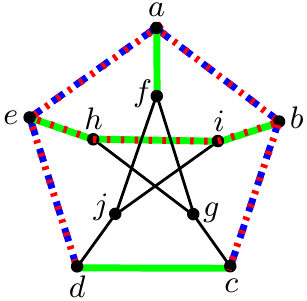}
        \caption{}\label{fig3}
    \end{subfigure}
    \medskip
    
    \begin{subfigure}{0.25\textwidth}
        \includegraphics[width=\textwidth]{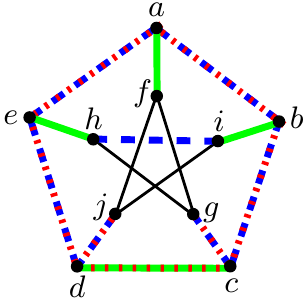}
        \caption{}\label{fig4}
    \end{subfigure}
    \hskip 1truecm
    \begin{subfigure}{0.25\textwidth}
        \includegraphics[width=\textwidth]{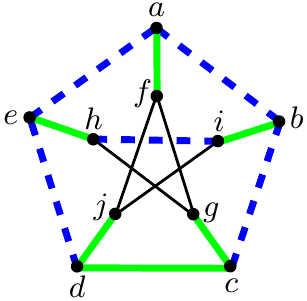}
        \caption{}\label{fig5}
    \end{subfigure}
     \hskip 1truecm
   \begin{subfigure}{0.25\textwidth}
        \includegraphics[width=\textwidth]{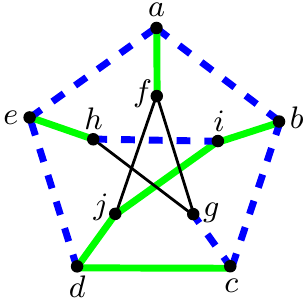}
        \caption{}\label{fig6}
    \end{subfigure}
    \medskip

    \begin{subfigure}{0.25\textwidth}
        \includegraphics[width=\textwidth]{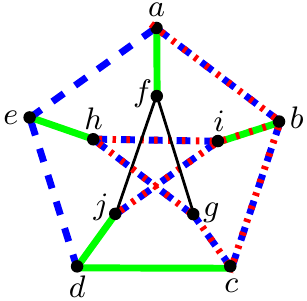}
        \caption{}\label{fig7}
    \end{subfigure}
    \hskip 1truecm
    \begin{subfigure}{0.25\textwidth}
        \includegraphics[width=\textwidth]{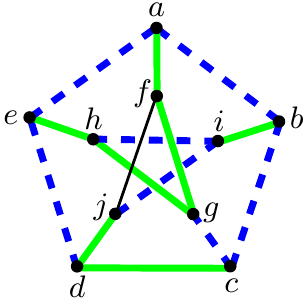}
        \caption{}\label{fig8}
    \end{subfigure}
    \hskip 1truecm
    \begin{subfigure}{0.25\textwidth}
        \includegraphics[width=\textwidth]{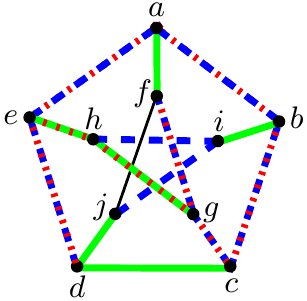}
        \caption{}\label{fig9}
    \end{subfigure}
     \medskip

   \begin{subfigure}{0.25\textwidth}
        \includegraphics[width=\textwidth]{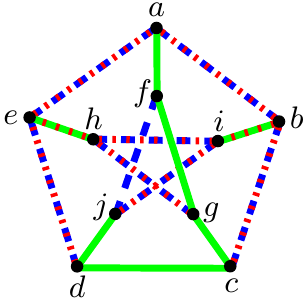}
        \caption{}\label{fig10}
    \end{subfigure}
    \hskip 1truecm
    \begin{subfigure}{0.25\textwidth}
        \includegraphics[width=\textwidth]{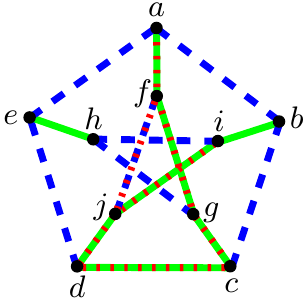}
        \caption{}\label{fig11}
    \end{subfigure}
    \hskip 1truecm
    \begin{subfigure}{0.25\textwidth}
        \includegraphics[width=\textwidth]{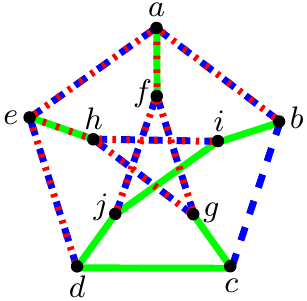}
        \caption{}\label{fig12}
    \end{subfigure}
    \medskip

    \begin{subfigure}{0.25\textwidth}
        \includegraphics[width=\textwidth]{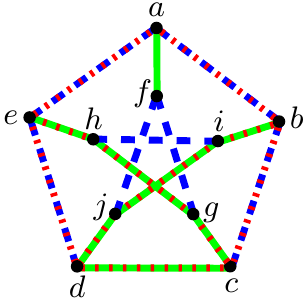}
        \caption{}\label{fig13}
    \end{subfigure}
    \caption{}\label{fig}
\end{figure}

\medskip

This result is helpful in proving a strengthening of itself.
\begin{claim}\label{4pathexists}
$G$ has a 5-cycle that contains a monochromatic 4-path.
\end{claim}

\begin{proof}
Suppose not. 
By Claim \ref{3pathexists}, without loss of generality $bc, cd, de\in B$. Then, by the assumption for $abcde$, we obtain that $ab, ae\in R$. By (\ref{star}) for $a, c$ and $d$, we obtain that $af\in B$ and $cg, dj\in R$. By (\ref{star}) for $f$, one of $fg$ and $fj$ is in $R$. By symmetry, without loss of generality  $fg\in R$. Then,  by (\ref{star}) for $g$, $gh\in B$. So,  by the assumption for $cdehg,$ we obtain that $eh\in R$. Then,  by the assumption for $abihe$, we obtain that $hi, bi\in B$. Thus $cbihg$ forms a monochromatic 4-path in the 5-cycle $cbihg$, that contradicts the assumption. See Figure \ref{fig2}.
 \end{proof}
 \medskip

By Claim \ref{4pathexists}, without loss of generality $ab, bc, de, ea\in R$. Then, by (\ref{oddcycle}) for $abcde$, we obtain that $cd\in B$.
By (\ref{star}) for $a, b$ and $e$, we obtain that $af, bi, eh\in B$. 
If $hi\in B$, then, by the 3-paths of  $deabc$ and by  $ehib$, we obtain that $H^{R,B}$ contains the 3-cycle $abe$ that contradicts (\ref{oddcycle}). 
See Figure \ref{fig3}.
Hence, $hi\in R$. 
If  $cg, dj\in R$, then, by the 3-paths of  $jdeabcg$ and by  $cd$, we obtain that $H^{R,B}$ contains the 5-cycle $abcde$ that contradicts (\ref{oddcycle}).
See Figure \ref{fig4}.
Hence, by symmetry, we may suppose that $dj\in B$. 
\smallskip

Now suppose for the sake of a contradiction that $G$ does not contain a 5-cycle that contains a monochromatic 4-path whose end-vertices are incident to 2 edges of the other color.
 If $cg\in B$, then  $abcde$ contradicts the assumption. See Figure \ref{fig5}. Hence $cg\in R$. 
If $ij\in B$, then  $bcdji$ contradicts the assumption. See Figure \ref{fig6}. Hence $ij\in R$. 
If $gh\in R$, then, by the 3-paths of  $abcghij$ and by  $bi$, we obtain that $H^{R,B}$ contains the 5-cycle $bcghi$ that contradicts (\ref{oddcycle}). See Figure \ref{fig7}.
Hence $gh\in B$. 
If $fg\in B$, then  $afghe$ contradicts the assumption. See Figure \ref{fig8}. Hence $fg\in R$. 
Then, by the 3-paths of  $deabcgf$ and by  $ehg$, we obtain that $H^{R,B}$ contains the 5-cycle $abcge$ that contradicts (\ref{oddcycle}).  
See Figure \ref{fig9}. This finishes the proof of Lemma \ref{propcol}.
 \end{proof}
 \medskip

Lemma \ref{propcol} yields that $G$ has a 5-cycle, without loss of generality $abcde$,  that contains a monochromatic 4-path whose end-vertices are incident to 2 edges of the other color.  By similar arguments as before, we obtain the partial coloring of Figure \ref{fig5}. By (\ref{star}) for $f$, one of $fg$ and $fj$ is in $R$. By symmetry, without loss of generality $fj\in R$. 

	 Suppose that $fg\in B$.  Then, by (\ref{star}) for $g$, $gh\in R$. 
 If $ij\in R$, then, by the 3-paths of  $deabc$ and for $ghij$, and by  $eh$ and $ib$, $H^{R,B}$ contains the 5-cycle $eabih$ contradicting (\ref{oddcycle}). See Figure \ref{fig10}.
 If $ij\in B$, then, by the 3-paths of  $afgcdji$ and by  $fj$, we obtain that $H^{R,B}$ contains the 5-cycle $fgcdj$ contradicting (\ref{oddcycle}). See Figure \ref{fig11}.

	  Hence $fg\in R$. Then, by (\ref{oddcycle}) for $fghij$, one of $hg$ and $ij$ is in $B$. By symmetry, we may suppose that $ij\in B$. 
 If $hg\in R$, then, by the 3-paths of  $jfghi$ and for $deab$, and by  $eh$ and $af$, we obtain that $H^{R,B}$ contains the 5-cycle $fghea$ contradicting (\ref{oddcycle}). See Figure \ref{fig12}.
 If $hg\in B$, then, by the 3-paths of  $deabc$ and by  $bijdcghe$, we obtain that $H^{R,B}$ contains the 3-cycle $eab$ contradicting (\ref{oddcycle}). See Figure \ref{fig13}. 
\medskip

In all cases we obtain a contradiction which implies that $G$ has Frank number different from $2$. This finishes the proof of Theorem \ref{pet3}.
\end{proof}

\section{Algorithmic aspects}\label{algo}

This section is dedicated to proving Theorem \ref{nphard}.

Our reduction is from a slightly stronger variation of MNAE3SAT. In the first part, we introduce this problem and show that it is NP-complete by a reduction from MNAE3SAT. Next, we introduce our construction and show that the constructed graph is cubic and 3-edge-connected. The last two parts are dedicated to showing that the reduction works indeed.

\subsection{Boolean formulas}
\medskip

Given a MNAE3SAT formula $F=(X,\mathcal{C})$, we call a truth assignment to the variables of $X$ {\it feasible} if every clause of $\mathcal{C}$ contains at least one true and  at least one false literal. We define the formula graph {\boldmath $G_F$} by $V(G_F)=X \cup \mathcal{C}$ and there is an edge between the vertices corresponding to a variable $x_i$ and a clause $C_j$ if $x_i$ is contained in $C_j$. We call a formula $F$ {\it connected} if $G_F$ is connected. We show that MNAE3SAT stays NP-complete with this additional assumption.

\bigskip
\noindent {\bf Connected Monotone Not-all-equal-3SAT(CMNAE3SAT)}

\smallskip
\noindent Instance: A set $X$ of boolean variables, a connected formula consisting of a set $\mathcal{C}$ of clauses each containing 3 distinct variables none of which are negated.

\smallskip
\noindent Question: Is there a feasible truth assignment to the variables of $X$?

\begin{lemma}
CMNAE3SAT is $NP$-complete.
\end{lemma}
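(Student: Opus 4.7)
The plan is to give a polynomial-time reduction from MNAE3SAT, which is NP-complete by Theorem~\ref{sathard}. Membership in NP is immediate: a feasible assignment serves as a polynomial certificate, and connectedness of $G_F$ is checked in polynomial time, so CMNAE3SAT is in NP.

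For the reduction, given an arbitrary instance $F=(X,\mathcal{C})$ of MNAE3SAT, I will iteratively merge the connected components of the formula graph while preserving feasibility. As long as $G_F$ has at least two connected components, pick a variable $x_1$ appearing in one component and a variable $x_2$ appearing in another, introduce a fresh variable $y$, and add the clause $\{x_1,x_2,y\}$ to $\mathcal{C}$ and $y$ to $X$. In the updated formula graph, the new clause vertex is adjacent to $x_1$, $x_2$ and $y$, so the two components containing $x_1$ and $x_2$ are merged into one. Each iteration strictly decreases the number of components, so after at most $|X|$ iterations the formula graph is connected. The whole construction adds a linear number of variables and clauses and runs in polynomial time; denote the resulting instance by $F'=(X',\mathcal{C}')$.

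It remains to verify that $F$ is a \textsc{yes}-instance of MNAE3SAT if and only if $F'$ is a \textsc{yes}-instance of CMNAE3SAT. For the forward direction, take any feasible assignment of $F$ and extend it to $X'$ by processing the added clauses in the order they were introduced: for a clause $\{x_1,x_2,y\}$ where $y$ is the fresh variable, set $y$ to \emph{false} if $x_1$ and $x_2$ are both true, to \emph{true} if $x_1$ and $x_2$ are both false, and arbitrarily otherwise. This ensures that each new clause contains at least one true and one false literal, and the original clauses in $\mathcal{C}$ remain feasible since the assignment on $X$ is unchanged. For the backward direction, the restriction to $X$ of any feasible assignment of $F'$ trivially satisfies every clause of $\mathcal{C}$ in the NAE sense, since those clauses were left untouched.

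Conceptually this is a standard padding argument, so I do not expect a serious obstacle; the only point that needs a moment of care is the use of a \emph{fresh} variable $y$ in each added clause. Without this, adding a clause involving only existing variables could constrain the original problem and break equivalence. With the fresh $y$, the new clause can always be satisfied by choosing $y$ appropriately, so no spurious constraint is introduced.
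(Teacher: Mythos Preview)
Your proof is correct, but it takes a different route from the paper. The paper does not modify the formula at all: given an MNAE3SAT instance $F$, it simply decomposes $G_F$ into its connected components $G_1,\ldots,G_t$, lets $F_i$ be the subformula on the variables and clauses in $G_i$, and observes that $F$ has a feasible assignment if and only if every $F_i$ does (the two directions are just restriction and union of assignments). This is really a Turing/conjunctive reduction rather than a single many-one reduction. Your approach instead produces one connected instance by gluing components together with fresh variables and auxiliary clauses, yielding a bona fide polynomial many-one reduction. What the paper's argument buys is simplicity: no gadget is introduced and equivalence is immediate. What your argument buys is a textbook many-one reduction and, as you note, the fresh variable in each new clause is exactly what guarantees that no spurious constraint is added. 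You also explicitly address membership in NP, which the paper leaves implicit.
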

\begin{proof}
We show a reduction from MNAE3SAT. Recall that MNAE3SAT is NP-complete by Theorem \ref{sathard}. Let $F$ be a MNAE3SAT formula. Let $G_1,\ldots,G_t$ be the connected components of $G_F$. For $i=1,\ldots,t$, consider the MNAE3SAT formula $F_i$ that consists of the variables and clauses corresponding to vertices in $G_i$. Observe that $G_{F_i}=G_i$ and so every $F_i$ is an instance of CMNAE3SAT. We will show that $F$ is a positive instance of MNAE3SAT if and only if all of the $F_i$ are positive instances of CMNAE3SAT. First assume that there is a feasible truth assignment for $F$. The restriction of this assignment to the variables of $F_i$ yields a feasible truth assignment for $F_i$ for all $i=1,\ldots,t$. Now assume that there is a feasible truth assignment for $F_i$ for $i=1,\ldots,t$. As every vertex corresponding to a variable is contained in exactly one component, every variable is contained in exactly one of the $F_i$ and so we obtain a unique assignment of boolean values to all variables. As every clause of $\mathcal{C}$ is contained in some $F_i$, this assignment is feasible for $F$. This finishes the proof.
\end{proof}

\subsection{The construction}
\medskip

Let $F=(X,\mathcal{C})$ be a CMNAE3SAT formula with $X=\{x_1,\ldots,x_m\}$. If there is a variable $x \in X$ that is contained in only one clause $C \in \mathcal{C}$, then $F$ is satisfiable if and only if $(X-\{x\},\mathcal{C}-\{C\})$ is satisfiable. We may therefore assume that every $x_i \in X$ is contained in at least 2 clauses. For $i=1,\ldots,m$, we define {\boldmath $p_i$} to be the number of clauses $x_i$ is contained in. 

We now construct an instance $(G=(V,E),S)$ of DELETABILITY. For $i=1,\ldots,m$, $G$ contains a cycle {\boldmath $K_i$} of length $2p_i$. We abbreviate $V(K_i)$ to {\boldmath $V_i$} and $E(K_i)$ to {\boldmath $E_i$}. Observe that $V_i$ can be partitioned into two stable sets in a unique way. We call one of these sets {\boldmath $A_i$} and the other one {\boldmath $B_i$}. Note that $|A_i|=|B_i|=p_i.$ For every clause $C$, $G$ contains a vertex {\boldmath $v_C$}. We denote $\{v_C:C \in \mathcal{C}\}$ by {\boldmath $V_{\mathcal{C}}$}. Further, $G$ contains a cycle {\boldmath $K$} of length $3|\mathcal{C}|$. We abbreviate $V(K)$ to {\boldmath $V_K$} and $E(K)$ to {\boldmath $E_K$}. 
We add a perfect matching between $\{v_C:x_i \in C\}$ and $A_i$ for every $i=1,\dots,m$ and  between  $\bigcup_{i=1}^m B_i$ and $V_K$. Observe that this is possible because $|A_i|=p_i$ and $|\bigcup_{i=1}^m B_i|=\sum_{i=1}^m p_i=3|\mathcal{C}|=|V_K|$. 
Finally, we define {\boldmath $S$} $= \bigcup_{i =1}^m E_i$.
Note that $|V|=10|\mathcal{C}|$ and $|E|=15|\mathcal{C}|$, so the construction is polynomial indeed.
\medskip

\begin{figure}[h]
        \centerline{\includegraphics[width=.6\textwidth]{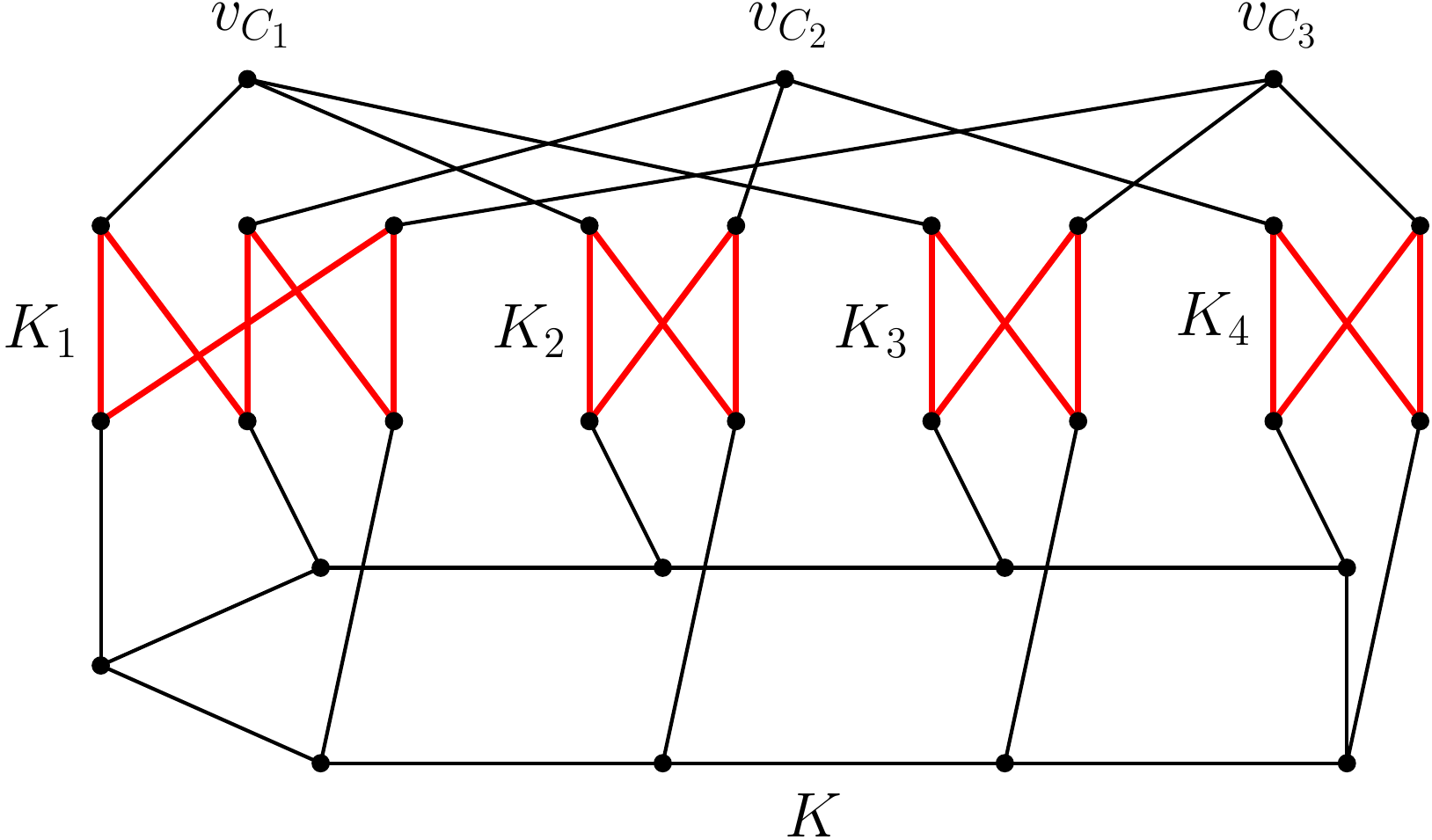}}
        \caption{}\label{constr}
\end{figure}

Figure \ref{constr} shows the constructed graph for the formula consisting of the variables $x_1,\ldots,x_4$ and the clauses $C_1=\{x_1,x_2,x_3\},C_2=\{x_1,x_2,x_4\}$ and $C_3=\{x_1,x_3,x_4\}$. The edges of $S$ are marked in red.
\medskip

 Observe that $G$ is cubic as every clause contains exactly 3 variables and by construction. We show that it also satisfies the other desired structural property.

\begin{lemma}\label{3-arete}
$G$ is 3-edge-connected.
\end{lemma}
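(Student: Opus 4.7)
I would prove $G$ is $3$-edge-connected by contradiction: suppose there is a nonempty proper subset $X \subsetneq V$ with $d_G(X) \leq 2$. The key structural observation is that each of the cycles $K_1, \ldots, K_m, K$ contributes at least two boundary edges to $\delta_G(X)$ whenever its vertex set is split properly between $X$ and $V-X$, so at most one of these cycles can be ``broken'' by $X$; otherwise $d_G(X) \geq 4$ already. I would then split the argument into two cases according to whether any cycle is broken.

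In the case where no cycle is broken, each $V_i$ and $V_K$ is fully inside or fully outside $X$; let $\alpha_i, \beta \in \{0,1\}$ be the corresponding indicators, and let $\gamma_C \in \{0,1\}$ indicate whether $v_C \in X$. Then $\delta_G(X)$ consists only of matching edges: $p_i$ edges from the $V_i$–$V_K$ matching whenever $\alpha_i \neq \beta$, and one edge per pair $(i,C)$ with $x_i \in C$ and $\alpha_i \neq \gamma_C$. Since $p_i \geq 2$, I would conclude as follows: two indices with $\alpha_i \neq \beta$ already give $d_G(X) \geq 4$; if all $\alpha_i = \beta$, any $v_C$ with $\gamma_C \neq \beta$ contributes $3$ via its three $A_i$-matches, so all $\gamma_C$ must equal $\beta$ and $X$ is trivial; and in the remaining subcase with exactly one $\alpha_i \neq \beta$ and $p_i = 2$, the fact that $x_i$ lies in at least two clauses each of three distinct variables forces the other variables in those clauses to share $\alpha_i$'s value, contradicting our assumption.

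In the case where exactly one cycle $K^*$ is broken, both boundary edges of $\delta_G(X)$ lie inside $K^*$, so no matching edge of $G$ crosses $X$. I would then run a propagation argument using the connectedness of $F$ (which is guaranteed by the reduction to CMNAE3SAT). Starting from any $X$-vertex incident to a matching edge, the ``no crossing'' condition drags its matched partner into $X$; since that partner lies in an unbroken cycle, the entire cycle is pulled into $X$, which in turn forces all of its matched neighbours in $V_\mathcal{C}$ into $X$, and from there (through the other two matches of each $v_C$) further cycles $V_{i'}$. Because $F$ is connected and every clause has three variables, this propagation eventually covers all of $V_1 \cup \cdots \cup V_m$, all of $V_\mathcal{C}$, and also $V_K$ via the $B_i$–$V_K$ matching. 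But then every matching partner of $V(K^*)$ lies in $X$, forcing $V(K^*) \subseteq X$, which contradicts $K^*$ being broken.

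The main obstacle is the subcase where $K^* = K_j$ and $X \cap V_j$ meets only one bipartition class of $K_j$ (i.e., $|X \cap V_j| = 1$, since a consecutive arc in $K_j$ of length $\geq 2$ meets both $A_j$ and $B_j$); there the cascading can only be seeded from one side and must then close on the other side. I expect a short dedicated argument: if, say, $X \cap V_j \subseteq A_j$, the $A_j$-side cascading still propagates through all the $v_C$'s and all other $V_{i'}$'s, which forces $V_K \subseteq X$ and hence $\bigcup_i B_i \subseteq X$, contradicting $B_j \cap X = \emptyset$; the symmetric subcase is analogous.
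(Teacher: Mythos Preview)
Your argument is correct, but it is organized quite differently from the paper's. The paper does not split into cases according to how many cycles are broken; instead it fixes the small side $Z$ so that $V_K\cap Z\neq\emptyset$, first proves $V_K\subseteq Z$ in one stroke (using that $G-E_K$ is connected because $G_F$ is connected, so a split of $V_K$ already costs $2+1$ edges), then shows each $v_C$ has three edge-disjoint paths to $V_K$ through its three variable cycles, and finally that any leftover vertex in some $V_i$ has three edge-disjoint paths to $V_K\cup V_{\mathcal C}$. This Menger-style route is shorter and avoids your indicator bookkeeping and the propagation argument entirely. Your approach, by contrast, makes the role of the cycle structure very explicit: the ``at most one broken cycle'' observation immediately isolates the two regimes (only matching edges cross / only cycle edges cross), and in the second regime the contradiction comes from a flood-fill through unbroken cycles rather than from exhibiting three paths. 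One small remark: in your ``main obstacle'' subcase you do not actually need the full $G_F$-propagation to reach all $V_{i'}$ before touching $V_K$; as soon as the single $A_j$-vertex drags in one $v_C$ and hence one $V_{i'}$ with $i'\neq j$, the $B_{i'}$--$V_K$ matching already gives $V_K\subseteq X$, which then pulls in all $B_i$ (including $B_j$) directly. Stating it in that order removes any hidden reliance on $G_F-x_j$ being connected.
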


\begin{proof}
Assume for the sake of a contradiction that $G$ contains some cut $\delta(Z)$ which consists of at most 2 edges. Without loss of generality, we may assume that $V_K \cap Z$ is nonempty. 

\begin{claim}\label{kz}
$V_K \subseteq Z$.
\end{claim}

\begin{proof}
Assume that there is a vertex $w \in V_K-Z$. As $G-V_K$ arises from $G_F$ by replacing vertices by cycles and $G_F$ is connected by assumption, $G-V_K$ is connected. Then, since a perfect matching exists between  $\bigcup_{i=1}^m B_i$ and $V_K$, we obtain that $G-E_K$ is also connected. As $K$ is 2-edge-connected, it follows that $2\geq d_G(Z)=d_K(Z)+d_{G-E_K}(Z)\geq2+1=3$, a contradiction. 
\end{proof}

\begin{claim}\label{cz}
$V_{\mathcal{C}} \subseteq Z$.
\end{claim}

\begin{proof}
Consider a vertex $v_C$ where $C$ contains the variables $x_i,x_j,x_{\ell}$. By construction, both $v_C$ and $K$ have a neighbor in each of $V_{i},V_{j}$ and $V_{\ell}$ and  $K_{i},K_{j}$ and $K_{\ell}$ are connected. As $V_K \subseteq Z$ by Claim \ref{kz}, there are 3 edge-disjoint paths from $v_C$ to $Z$. It follows, by $d_G(Z)\le 2,$ that $v_C \in Z$.
\end{proof}
\medskip

By Claims \ref{kz} and \ref{cz}, there exists a vertex $v \in V_i-Z$ for some $i=1,\ldots,m$ and $v$ is connected to $V_K\cup V_{\mathcal C}\subseteq Z$ by a path of length 1 and two paths of length 2 and all of these are edge-disjoint. 
  This is a contradiction to $Z$ being separated from $v$ by a cut of at most 2 edges. This finishes the proof of Lemma \ref{3-arete}.
\end{proof}
\medskip

The remaining part of this section is dedicated to showing that our construction is indeed correct, i.e. $F$ is a positive instance of CMNAE3SAT if and only if $(G,S)$ is a positive instance of DELETABILITY.

\subsection{From orientation to truth assignment}
\medskip

Suppose that $(G,S)$ is a positive instance of DELETABILITY, so there is an orientation $D$ of $G$ such that $D-\vec{s}$ is strongly connected for all $s \in S$. Before finding a feasible truth assignment of the formula, we need the following result about the orientation.

\begin{claim}\label{allsame}
	Let $i \in \{1,\ldots,m\}$. Then all the arcs between $A_i$ and $B_i$ are directed in the same way.
\end{claim}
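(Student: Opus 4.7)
The plan is to exploit the fact that every edge of $K_i$ lies in $S$, so the assumed orientation $D$ must make $S$ deletable. By Proposition \ref{efface}, this means that for every nonempty proper subset $X$ of $V$, the set $\delta^-_D(X)$ contains either an arc of $A - S$ or at least two arcs. I would specialize this condition to $X = \{v\}$ and $X = V - \{v\}$ for each $v \in V_i$. Since $G$ is cubic and the only edge of $G$ incident to $v$ that is \emph{not} in $S$ is the single matching edge joining $v$ to the rest of the graph, the argument splits cleanly according to the orientation of this matching arc.

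More precisely, if the matching arc at $v$ is oriented out of $v$, then $\delta^-_D(\{v\})$ contains no non-$S$-arc, and so Proposition \ref{efface} forces $|\delta^-_D(\{v\})| \ge 2$; since the only candidates are the two $K_i$-arcs at $v$, both of them must enter $v$. Symmetrically, if the matching arc at $v$ enters $v$, then applying Proposition \ref{efface} with $X = V - \{v\}$ forces both cycle-arcs at $v$ to leave $v$. Hence every vertex of $V_i$ is either a local source or a local sink of $K_i$.

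The remaining step is a short combinatorial observation: in any orientation of a cycle in which every vertex is a source or a sink, the sources and sinks must alternate along the cycle. Since $K_i$ is a bipartite cycle whose parts $A_i$ and $B_i$ themselves alternate, the source/sink classes must coincide with $\{A_i, B_i\}$. Thus either every vertex of $A_i$ is a source and every vertex of $B_i$ is a sink, in which case all arcs of $K_i$ go from $A_i$ to $B_i$, or the reverse, yielding the claim. I expect the only mild subtlety to lie in the first step, where one must correctly identify the unique non-$S$-arc at $v$ and read off the two symmetric consequences of Proposition \ref{efface}; once the local source/sink dichotomy is in place, the bipartite cycle argument is immediate.
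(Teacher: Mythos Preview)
Your proposal is correct and follows essentially the same approach as the paper. The paper's proof is just a terser version: it argues directly from the strong connectivity of $D-\vec e$ and $D-\vec f$ (rather than via Proposition~\ref{efface}) that the two $K_i$-arcs at each vertex $v$ must both enter or both leave $v$, and then concludes with ``since $K_i$ is connected'' in place of your explicit source/sink alternation argument.
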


\begin{proof}
Let $v$ be any vertex of $K_i$ and $e,f$ the two edges of $K_i$ incident to $v.$ Since $e,f\in S,$ $D-e$ and $D-f$ are strongly connected. Then, as $G$ is cubic,  both of $e$ and $f$ are either entering or leaving $v$. Since $K_i$ is connected, the claim follows.
\end{proof}
\medskip

Using Claim \ref{allsame}, we now define a truth assignment of $X$ in the following way: a variable $x_i$ is assigned the value true if the arcs between $A_i$ and $B_i$ are directed from $B_i$ to $A_i$ and false if  the arcs between $A_i$ and $B_i$ are directed from $A_i$ to $B_i$. 

Consider a clause $C=\{x_i,x_j,x_{\ell}\}$. The vertex $v_C$ has one neighbor in each of $A_i,A_j$ and $A_{\ell}$ in $G$. As $D$ is strongly connected and $G$ is cubic, $v_C$ has one in-neighbor $w$, say in $A_{\ell}$ and $w$ has an in-neighbor in $D[V_{\ell}]$. It follows by construction that $x_{\ell}$ is set to true in the truth assignment. Similarly, one of $x_i,x_j,x_{\ell}$ is set to false. It follows that the assignment is feasible.

\subsection{From truth assignment to orientation}
\medskip

Assume that there is a feasible truth assignment for an instance $F$ of CMNAE3SAT consisting of a variable set $X=\{x_1,\ldots,x_m\}$ and a clause set $\mathcal{C}$. Relabeling variables, we may assume that there is some {\boldmath $t$} $\in \{0,\ldots,m\}$ such that $x_i$ is set to true for $i=1,\ldots,t$ and $x_i$ is set to false for $i=t+1,\ldots,m$. 
Let {\boldmath $\mathcal{A}_1$} $=\bigcup_{i=1}^t A_{i},$ {\boldmath $\mathcal{A}_2$} $=\bigcup_{i=t+1}^m A_{i},$ {\boldmath $\mathcal{B}_1$} $=\bigcup_{i=1}^t B_{i}$ and {\boldmath $\mathcal{B}_2$} $=\bigcup_{i=t+1}^m B_{i}.$ 

We define an orientation {\boldmath $D$} of $G$ as follows. We orient all edges  from $P$ to $R$ where $P$ and $R$ are two consecutive sets in $\mathcal{A}_1,V_{\mathcal{C}}, \mathcal{A}_2,\mathcal{B}_2, V_K,\mathcal{B}_1,\mathcal{A}_1.$
Finally, we orient the edges of $K$ as a circuit. 
\medskip

 \begin{figure}[h]
        \centerline{\includegraphics[width=.6\textwidth]{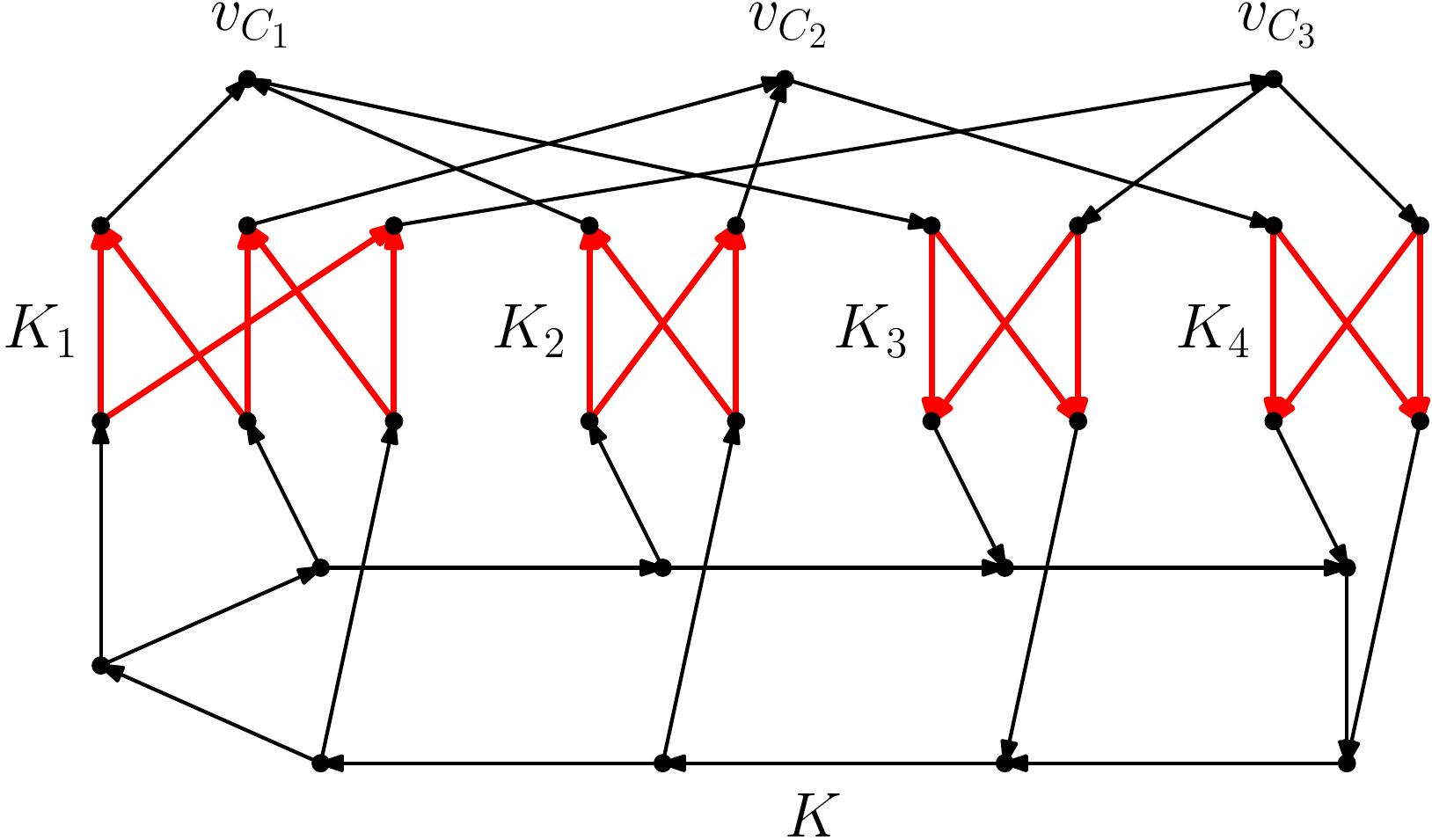}}
        \caption{}
\label{directed}
    \end{figure}
    
Figure \ref{directed} shows the obtained orientation for the formula consisting of the variables $x_1,\ldots,x_4$ and the clauses $C_1=\{x_1,x_2,x_3\},C_2=\{x_1,x_2,x_4\}$ and $C_3=\{x_1,x_3,x_4\}$ when $x_1$ and $x_2$ are set to true and $x_3$ and $x_4$ are set to false.
\medskip

The following is the orientation's decisive property:

\begin{claim}\label{inout}
In $D$, every vertex $v_C\in V_\mathcal{C}$ has an in-neighbor in $\mathcal{A}_1$ and an out-neighbor in $\mathcal{A}_2$. 
\end{claim}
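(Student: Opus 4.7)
The plan is to unwind the definitions and directly exploit the feasibility assumption on the truth assignment. Consider an arbitrary clause $C=\{x_i,x_j,x_\ell\}$ and its associated vertex $v_C\in V_\mathcal{C}$. By the construction of the graph $G$, the matching between $\{v_C:x_i\in C\}$ and $A_i$ (added for each variable) ensures that $v_C$ has exactly one neighbor in each of $A_i,A_j,A_\ell$, and these are the only three neighbors of $v_C$ in $G$.

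Next I would observe how the partition of variables into true and false variables translates to the partition of the $A_i$'s. By the relabeling, $A_r\subseteq \mathcal{A}_1$ if $x_r$ is true and $A_r\subseteq \mathcal{A}_2$ if $x_r$ is false. The orientation $D$ is defined so that every edge between $\mathcal{A}_1$ and $V_\mathcal{C}$ is directed from $\mathcal{A}_1$ to $V_\mathcal{C}$, and every edge between $V_\mathcal{C}$ and $\mathcal{A}_2$ is directed from $V_\mathcal{C}$ to $\mathcal{A}_2$. Hence each neighbor of $v_C$ lying in some $A_r$ with $x_r$ true is an in-neighbor of $v_C$ contained in $\mathcal{A}_1$, and each neighbor of $v_C$ lying in some $A_r$ with $x_r$ false is an out-neighbor of $v_C$ contained in $\mathcal{A}_2$.

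Finally I would apply the feasibility assumption: since the truth assignment is feasible for the CMNAE3SAT instance, the clause $C$ contains at least one true literal and at least one false literal. Equivalently, at least one of $i,j,\ell$ is in $\{1,\dots,t\}$ and at least one of them is in $\{t+1,\dots,m\}$. Combined with the previous observation, this yields an in-neighbor of $v_C$ in $\mathcal{A}_1$ and an out-neighbor of $v_C$ in $\mathcal{A}_2$, as required.

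There is really no hard step here: the claim is essentially a dictionary translation between the boolean structure of the feasible assignment and the combinatorial structure of $D$. The only thing to keep an eye on is to make sure one uses \emph{both} halves of the feasibility condition (a true literal for the in-neighbor, a false literal for the out-neighbor) rather than, say, only the nonemptiness of one side. Once that is observed, the argument is a one-line check per direction.
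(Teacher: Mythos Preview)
Your proof is correct and follows essentially the same argument as the paper: pick a clause $C=\{x_i,x_j,x_\ell\}$, use feasibility of the assignment to find a true variable (giving the in-neighbor in $\mathcal{A}_1$) and a false variable (giving the out-neighbor in $\mathcal{A}_2$), and appeal to the orientation rules between $\mathcal{A}_1,V_\mathcal{C},\mathcal{A}_2$. Your write-up is more explicit about the matching structure and the partition $\mathcal{A}_1\cup\mathcal{A}_2$, but the idea is identical.
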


\begin{proof}
Let $C$ contain the 3 variables $x_i,x_j$ and $x_{\ell}$. As the truth assignment is feasible, one of $x_i,x_j,x_{\ell}$, say $x_i$, is set to true and a different one, say $x_j$, is set to false. Then, by construction, $D$ contains an arc from $A_i\subseteq \mathcal{A}_1$ to $v_C$ and an arc from $v_C$ to $A_j\subseteq \mathcal{A}_2$. 
\end{proof}
\medskip

The following result will finish the proof:

\begin{claim}
Let $s \in S$. Then $D-\vec{s}$ is strongly connected.
\end{claim}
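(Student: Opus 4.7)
The plan exploits the macroscopic cyclic structure of $D$: every arc of $D$ either lies inside $V_K$ (which is oriented as a circuit) or crosses between two consecutive blocks of the cyclic ordering $\mathcal{A}_1,V_{\mathcal{C}},\mathcal{A}_2,\mathcal{B}_2,V_K,\mathcal{B}_1$. I will show that in $D-\vec s$ one can realise one full revolution of this cycle starting from any vertex, which immediately yields strong connectivity.

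The edge $s$ lies in some $E_i$. The cases $i\le t$ and $i>t$ are completely analogous (they differ only in which block-to-block segment of the cyclic ordering is weakened), so I describe the argument for $i\le t$. In that case $\vec s=\vec{uv}$ with $u\in B_i\subseteq\mathcal{B}_1$ and $v\in A_i\subseteq\mathcal{A}_1$, and the deletion affects only the passage from $\mathcal{B}_1$ into $\mathcal{A}_1$ through $K_i$.

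The bulk of the proof then consists of a short chain of elementary reachability facts in $D-\vec s$: (1) $V_K$ remains a directed circuit, so its vertices are mutually reachable; (2) from any vertex of $V_K$ one reaches every vertex of $\mathcal{B}_1$ by walking around the circuit to the unique $V_K$-vertex matched to the desired $B_j$-vertex and then taking that matching arc; (3) every vertex of $\mathcal{B}_1$ still has an out-arc to $\mathcal{A}_1$ along some $K_j$ (even $u$ keeps its second out-arc on $K_i$); (4) the unique matching arc carries $\mathcal{A}_1$ to $V_{\mathcal{C}}$, and Claim~\ref{inout} provides an out-arc from each $v_C$ to $\mathcal{A}_2$; (5) from $\mathcal{A}_2$ one reaches $\mathcal{B}_2$ via the intact cycles $K_j$ for $j>t$; (6) the remaining half of the matching brings $\mathcal{B}_2$ back to $V_K$.

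The main subtlety, and the step that must be checked with care, is the reachability of the vertex $v\in A_i$, which has lost one of its two $K_i$-in-arcs after the deletion. Its remaining in-arc comes from its other $B_i$-neighbour $u'\ne u$, and by step~(2) the vertex $u'$ is reachable from every vertex of $V_K$, hence, via the long loop~(1)--(6), from every vertex of $D-\vec s$. Chaining (1)--(6) then produces a directed path between any two vertices of $D-\vec s$, establishing strong connectivity and completing the claim.
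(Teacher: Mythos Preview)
Your argument is correct and follows essentially the same route as the paper: both proofs use that $V_K$ is a circuit, that every vertex of $\mathcal{A}_1$ (respectively $\mathcal{B}_2$) keeps at least one of its two $K_j$-in-neighbours (respectively out-neighbours) after a single deletion, and that Claim~\ref{inout} closes the loop through $V_{\mathcal{C}}$. The only organisational difference is that the paper treats the two cases $i\le t$ and $i>t$ simultaneously by phrasing the key step as ``two neighbours in $D$, hence at least one in $D-\vec s$'' on both the $\mathcal{B}_1\to\mathcal{A}_1$ and $\mathcal{A}_2\to\mathcal{B}_2$ transitions, whereas you split into cases; your version is slightly longer but equally valid.
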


\begin{proof} Since $K$ is oriented as a circuit, all vertices of $K$ are in the same strongly connected component {\boldmath $Q$}. 
By construction, all vertices in $\mathcal{B}_1$ have an in-neighbor in $V_K\subseteq Q$ and all vertices in $\mathcal{A}_1$ have 2 in-neighbors in $\mathcal{B}_1$ in $D$, so at least one in $D-\vec{s}$. It follows, by Claim \ref{inout}, that all vertices in $\mathcal{A}_1 \cup \mathcal{B}_1\cup V_\mathcal{C}$ are reachable from $Q$.
 By similar arguments, $Q$ is reachable from all vertices in $\mathcal{A}_2 \cup \mathcal{B}_2\cup V_\mathcal{C}$. 
This yields that $V_\mathcal{C}\subseteq Q$.
Finally, from every vertex in $\mathcal{A}_1 \cup \mathcal{B}_1$ there exists a directed path of length 1 or 2 to a vertex $v_C\in V_\mathcal{C}$. Similarly, to every vertex in $\mathcal{A}_2 \cup \mathcal{B}_2$ there exists  a directed path of length 1 or 2 from a vertex  $v_C\in V_\mathcal{C}$. It follows that $D-\vec{s}$ is strongly connected.
\end{proof}
\bigskip

This reduction proves Theorem \ref{nphard}.

\section{Conclusion}\label{conc}

Our work shows that $f(G)\leq 7$ for every $3$-edge-connected graph $G$ and that $f(G)=3$ if $G$ is the Petersen graph. Also, we show a better bound for the more restricted classes of essentially $4$-edge-connected graphs and $3$-edge-colorable, $3$-edge-connected graphs. Further, we show that a graph of Frank number bigger than $5$ would imply the failure of Conjecture \ref{bf}. Moreover, the decision problem  whether all edges of a given subset can become deletable in one orientation is proven to be NP-complete.
\smallskip

The most obvious remaining problem is to improve these bounds on the Frank number in the general case. Considering the indications found during our work, we propose the following conjecture:

\begin{conj}\label{3gen}
Every $3$-edge-connected graph $G$ satisfies $f(G)\leq 3$.
\end{conj}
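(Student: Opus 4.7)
The natural plan follows the template developed for Theorems \ref{7}, \ref{bftheo} and \ref{34ec}: first reduce to cubic $3$-edge-connected graphs via cubic extensions, and then handle the cubic case by induction on the number of non-trivial $3$-edge-cuts, with Theorem \ref{cubice4ec} (the essentially $4$-edge-connected cubic case) as the base case.

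The cubic reduction should be essentially routine given what is already in the paper. First, if $G$ has a cut-vertex, Frank orientations of the two sides can be merged, exactly as in Claim \ref{cutvertex}. Otherwise, one applies Proposition \ref{red34}(a) to a cubic extension $H_G$ and pulls back the $3$ orientations by contracting the inserted cycles $C_v$, as at the end of the proof of Theorem \ref{7}.

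For the inductive step among cubic graphs, let $G$ be a minimum counterexample. If $G$ is essentially $4$-edge-connected, Theorem \ref{cubice4ec} already supplies the required $3$ orientations. Otherwise $G$ has a non-trivial $3$-edge-cut $F=\{e_1,e_2,e_3\}$ separating $V(G)$ into non-trivial parts $A_1,A_2$. The contractions $G_i=G/A_{3-i}$ are strictly smaller cubic $3$-edge-connected graphs, each admitting by minimality three orientations $D_1^i,D_2^i,D_3^i$ that certify $f(G_i)\le 3$. One then tries to combine these into three orientations $D_1,D_2,D_3$ of $G$ with the Frank property, as was done in Claim \ref{cutvertex} for a single orientation and in Claim \ref{oubsfjd} for seven.

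The main obstacle is this combination. For $D_j^1$ and $D_k^2$ to merge into a well-defined orientation of $G$, the three arcs of $F$ must be oriented consistently on both sides. Strong connectivity forces the orientation of $F$ at each contracted vertex to be one-in--two-out or two-in--one-out, giving six possible patterns per side, and compatibility across the cut is required for every pair one intends to merge. Moreover, an edge of $F$ deletable in some $D_j^i$ will only remain deletable in the merged orientation $D_j$ provided strong connectivity still holds across the cut after its removal. With only three orientations per side and no a priori control over their patterns, these compatibility constraints need not be satisfiable, even after global reversals $D_j^i\mapsto\cev{D_j^i}$. Making this work appears to require a strengthening of the inductive hypothesis that prescribes the orientation patterns on a given $3$-edge-cut in advance --- effectively, a parameterized version of Theorems \ref{cubice4ec} and Conjecture \ref{3gen} itself that yields three orientations realizing any prescribed family of patterns on a distinguished $3$-edge-cut compatible with strong connectivity. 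Establishing such a parameterized statement, perhaps by exploiting Theorem \ref{fort} to produce well-balanced orientations of a suitably augmented graph that respect the prescribed patterns, is where the real difficulty lies, and is the reason this remains only a conjecture.
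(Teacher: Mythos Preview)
The statement is a \emph{conjecture} in the paper, not a theorem: the authors propose it in the conclusion and give no proof. Your proposal is not a proof either, and you explicitly acknowledge this in your final sentence. So there is no paper proof to compare against, and your write-up is best read as a discussion of obstacles rather than a proof attempt.

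That said, your analysis of the natural approach is accurate and worth recording. The reduction to the cubic case via cubic extensions and Claim~\ref{cutvertex}/Proposition~\ref{red34}(a) would indeed go through verbatim. The genuine difficulty is exactly where you locate it: in the inductive step across a non-trivial $3$-edge-cut, merging three orientations per side requires matching the orientation patterns of the cut edges $\{e_1,e_2,e_3\}$ on both sides \emph{and} ensuring that each cut edge is deletable in some merged orientation. With seven orientations (Lemma~\ref{strong7packing}, Claim~\ref{oubsfjd}) there is enough slack to relabel and align; with three there is not, and global reversal only doubles the available patterns. Your suggestion that one would need a strengthened, ``boundary-prescribed'' version of Theorem~\ref{cubice4ec} is a reasonable formulation of what is missing. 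The paper does not attempt this; it simply states the conjecture and, as partial evidence, offers the weaker Conjecture~\ref{couplagegen}, whose truth would give $f(G)\le 4$ via Vizing's theorem.
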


A possible way to make progress towards Conjecture \ref{3gen} would be the following generalization of Lemmas \ref{perfect} and \ref{matching}. Using the fact that cubic graphs are $4$-edge-colorable \cite{vizing} and similar arguments as before, Conjecture \ref{couplagegen} would imply that $f(G)\leq 4$ for any $3$-edge-connected graph.

\begin{conj}\label{couplagegen}
Let $M$ be a matching of a $3$-edge-connected graph $G$ intersecting each $3$-edge-cut of $G$ in at most one edge. Then $M$ is deletable.
\end{conj}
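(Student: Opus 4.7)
The plan is to reduce the conjecture to its cubic case via cubic extensions, as in the proofs of Theorems \ref{7} and \ref{34ec}, and then to attack the cubic case by finding a cycle packing whose special set contains $M$ and applying Lemma \ref{cubicmatching}.

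For the reduction, one forms the cubic extension $H_G$ of $G$; cut vertices of $G$, which obstruct the 3-edge-connectivity of $H_G$, would be handled separately by a decomposition argument analogous to Claim \ref{cutvertex}. Since each cycle $C_v$ contains no edge of $M\subseteq E(G)$, the set $M$ remains a matching in $H_G$. The hypothesis on 3-edge-cuts transfers: a 3-edge-cut of $H_G$ that uses no cycle-edge corresponds to a 3-edge-cut of $G$, while any 3-edge-cut that uses a cycle-edge must use exactly two cycle-edges by parity (and no edges from any other $C_w$, since otherwise the cut would have size at least four), leaving at most one edge that could belong to $M$. A positive answer for $H_G$ would translate back to $G$ by contracting the cycles $C_v$, as in the proofs of Theorems \ref{7} and \ref{34ec}.

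For the cubic case, the strategy is to extend $M$ to a perfect matching $M'$ of $G$ such that no 3-edge-cut of $G$ is simultaneously contained in $M'$ and intersects $M$. Given such $M'$, the connected components of $G-M'$ form a cycle packing $\mathcal{C}$, and the 3-edge-cuts of $G/\mathcal{C}$ are precisely the 3-edge-cuts of $G$ contained in $M'$; by the choice of $M'$, none of these contains an edge of $M$. Hence $M$ lies in the special set of $\mathcal{C}$, and Lemma \ref{cubicmatching} yields that $M$ is deletable. Because $|F\cap M|\leq 1$ for every 3-edge-cut $F$, each potentially problematic cut $F$ consists of one $M$-edge and two non-$M$-edges, so the constraint on $M'$ reduces to leaving at least one of those two non-$M$-edges outside $M'$.

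The main obstacle is establishing the existence of such a perfect matching $M'$. By Proposition \ref{crossfree}, the 3-edge-cuts of $G$ form a laminar family, which should give significant structural control. A natural approach is to process the 3-edge-cuts intersecting $M$ from the innermost outwards and build $M'$ inductively while respecting the forbidden pairs of edges; an alternative is to phrase the existence of $M'$ as a Tutte-Berge-type condition on a suitable auxiliary graph encoding the constraints, and then verify this condition using the laminar structure and the hypothesis. The assumption $|F\cap M|\leq 1$ appears just strong enough for such an argument to go through, but converting the laminar structure of 3-edge-cuts and the resulting matching constraints into a rigorous existence proof is the essential difficulty, and is presumably why the statement remains open.
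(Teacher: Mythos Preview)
The statement you are attempting is Conjecture~\ref{couplagegen}, which the paper presents in its concluding section as an \emph{open problem}; the paper contains no proof of it, so there is nothing to compare your proposal against. You seem aware of this, since you call the existence of the perfect matching $M'$ ``presumably why the statement remains open.''

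That said, your outline is a sensible line of attack and is consistent with the paper's toolkit. The reduction to the cubic case via cubic extensions is sound: your analysis of the $3$-edge-cuts of $H_G$ (either corresponding to a $3$-edge-cut of $G$, or containing exactly two edges of a single $C_v$ and hence at most one edge of $M$) is correct, and the handling of cut vertices by splitting along them does preserve the hypothesis, since every $3$-edge-cut of $G/G_{3-i}$ is already a $3$-edge-cut of $G$. Combining the two resulting orientations works exactly as in Claim~\ref{cutvertex}. For the cubic case, reducing to Lemma~\ref{cubicmatching} by extending $M$ to a perfect matching $M'$ whose induced cycle packing has special set containing $M$ is the natural thing to try, and your reformulation of the constraint on $M'$ (for each $3$-edge-cut $F$ meeting $M$, at least one of the two non-$M$ edges of $F$ must be excluded from $M'$) is correct.

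The genuine gap is exactly the one you name: proving that such an $M'$ exists. The laminar structure of $3$-edge-cuts (Proposition~\ref{crossfree}) is certainly relevant, but neither the inductive processing you sketch nor a Tutte--Berge argument on an auxiliary graph is carried out, and there is no indication in the paper or in your proposal of why the hypothesis $|F\cap M|\le 1$ suffices to guarantee such an extension. Until that step is supplied, the proposal remains a strategy rather than a proof.
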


It would also be interesting to generalize Frank numbers to arbitrary odd connectivity:

\begin{opprob}
Given a $(2k+1)$-edge-connected graph $G$, what is the minimum number of  $k$-arc-connected orientations such that each edge becomes an arc whose deletion does not destroy $k$-arc-connectivity in at least one of these orientations?
\end{opprob}

It follows from a theorem in \cite{djs} that this number is bounded by a constant depending only upon $k$. We are particularly interested in whether or not this number can be bounded by a constant not depending upon $k$.

\section{Acknowledgements}\label{ack}
We would like to express our gratitude to Andr\'as Frank who introduced us to the problem. We also wish to thank Alantha Newman who made us aware of the results of  \cite {djs}.

\end{document}